\newtheorem{theorem}{Theorem}
\newtheorem{definition}[theorem]{Definition}
\newtheorem{proposition}[theorem]{Proposition}
\newtheorem{lemma}[theorem]{Lemma}
\newtheorem{corollary}[theorem]{Corollary}
\newtheorem{remark}[theorem]{Remark}
\numberwithin{equation}{section}
\newcommand{\RR}{\mathbb{R}}
\newcommand{\N}{\mathbb{N}}
\let\SS\S
\newcommand{\AC}{\mathrm{ac}}
\newcommand{\eps}{\varepsilon}
\newcommand{\supp}{{\rm supp\ }}
\newcommand{\mW}{{\mathcal W}}
\newcommand{\mF}{{\mathcal F}}
\newcommand{\mH}{{\mathcal H}}
\newcommand{\mY}{{\mathcal Y}}
\newcommand{\mL}{{\mathcal L}}
\newcommand{\bC}{{\mathbbm{1}}}
\newcommand{\bla}{\big\langle}
\newcommand{\bra}{\big\rangle}
\title{Ground States in the Diffusion-Dominated Regime}
\author{Jos\'e A. Carrillo$^1$}
\address{$^1$Department of Mathematics, Imperial College London, South Kensington Campus, London SW7 2AZ, UK. Email: \texttt{carrillo@imperial.ac.uk}.}
\author{Franca Hoffmann$^{2,1}$}
\address{$^2$DPMMS, Centre for Mathematical Sciences, University of Cambridge, Wilberforce Road, Cambridge CB3 0WA, UK. Email: \texttt{fkoh2@cam.ac.uk}.}
\author{Edoardo Mainini$^3$}
\address{$^3$ Dipartimento di Ingegneria Meccanica, Universit\`a degli Studi di Genova, Piazzale Kennedy, Pad. D, 16129, Genova,
          Italia. Email: \texttt{edoardo.mainini@unipv.it}.}
\author{Bruno Volzone$^4$}
\address{$^4$ Dipartimento di Ingegneria Universit\`a degli Studi di Napoli ``Parthenope'', Napoli, 80143,
          Italia. Email: \texttt{bruno.volzone@uniparthenope.it}.}
\begin{document}

\maketitle

\begin{abstract}
We consider macroscopic descriptions of particles where repulsion is modelled by non-linear power-law diffusion and attraction by a homogeneous singular
kernel leading to variants of the Keller-Segel model of chemotaxis.
We analyse the regime in which diffusive forces are stronger than attraction between particles, known as the diffusion-dominated regime, and show that all stationary states of the system are radially symmetric decreasing and compactly supported. The model can be
formulated as a gradient flow of a free energy functional for which the overall convexity properties are not known. We show that global minimisers of the
free energy always exist. Further, they are radially symmetric, compactly supported, uniformly bounded and $C^\infty$ inside their support. Global minimisers enjoy certain regularity
properties if the diffusion is not too slow, and in this case, provide stationary states of the system.  In one dimension, stationary states are
characterised as optimisers of a functional inequality which establishes equivalence between global minimisers and stationary states, and allows to deduce uniqueness.
\end{abstract}

% \vspace{0.5 cm}
% \noindent
% {\small
% \textbf{Keywords:} aggregation-diffusion equation, free energy functional, Keller-Segel type model, diffusion-dominated regime, optimisation.\\
% % \PACS{PACS code1 \and PACS code2 \and more}
% \textbf{Mathematics Subject Classification:} MSC 35K55, MSC 35K65, MSC 49K20.
% }
%%%%%%%%%%%%%%%%%%%%%%%%%%%%%%%%%%%%%%%%%%%%%%%%%%%%%%
%%%%%%%%%%%%%%%%%%%%%%%%%%%%%%%%%%%%%%%%%%%%%%%%%%%%%%

%%%%%%%%%%%%%%%%%%%%%%%%
%%%%%%%%%%%%%%%%%%%%%%%%
%%%%%%%%%%%%%%%%%%%%%%%%
 \section{Introduction}\label{sec:introdiffdom}
 %%%%%%%%%%%%%%%%%%%%%%%%
 %%%%%%%%%%%%%%%%%%%%%%%%
 %%%%%%%%%%%%%%%%%%%%%%%%

We are interested in the diffusion-aggregation equation
\begin{equation}\label{eq:KS}
 \partial_t \rho = \Delta \rho^m +\chi\nabla \cdot \left( \rho \, \nabla S_k\right)
\end{equation}
for a density $\rho(t,x)$ of unit mass defined on $\RR_+ \times \RR^N$, and where we define the mean-field potential $S_k(x) := W_k(x)\ast\rho(x)$ for some
interaction kernel $W_k$. The parameter $\chi>0$ denotes the interaction strength. Since \eqref{eq:KS} conserves mass, is positivity preserving and invariant by translations, we work with solutions
$\rho$ in the set
\begin{equation*}
\mY:=\left\{ \rho \in L_+^1(\RR^N) \cap L^m(\RR^N)\,,\,||\rho||_1=1\, ,\, \int_{\RR^N} x\rho(x)\,dx=0\right\}\, .
\end{equation*}
The interaction $W_k$ is given by the Riesz kernel
\begin{equation*}%\label{potential}
 W_k(x)=\frac{|x|^{k}}{k}, \quad k \in (-N,0). %k=2s-N
\end{equation*}
%%%%%%%%%%%%%%%%%%%%%%%%%%%
Let us write $k=2s-N$ with $s\in \left(0,\frac{N}{2}\right)$. Then the convolution term $S_k$ is governed by a fractional diffusion process,
$$
c_{N,s}(-\Delta)^s S_k =  \rho\, , \qquad c_{N,s}=(2s-N) \frac{\Gamma\left(\frac{N}{2}-s\right)}{\pi^{N/2}4^s\Gamma(s)}
=\frac{k\Gamma\left(-k/2\right)}{\pi^{N/2}2^{k+N}\Gamma\left(\frac{k+N}{2}\right)}\, .
$$
%In terms of regularity, this means that $S_k \in \mW_{loc}^{2s,m}(\RR^N)$ since $\rho \in L^1(\RR^N)\cap L^m(\RR^N)$. In other words,
For $k>1-N$ the gradient $\nabla S_k:= \nabla \left(W_k \ast \rho\right)$ is well defined locally.
For $k \in \left(-N,1-N\right]$ however, it becomes a singular integral, and we thus define it via a Cauchy principal value,
\begin{equation}\label{gradS}
 \nabla S_k(x) :=
 \begin{cases}
  \nabla \left(W_k \ast \rho\right)(x)\, ,
  &\text{if} \, \, 1-N<k<0\, , \\[2mm]
  \displaystyle\int_{\RR^N} \nabla W_k(x-y)\left(\rho(y)-\rho(x)\right)\, dy\, ,
  &\text{if} \, \, -N<k\leq 1-N\, .
 \end{cases}
\end{equation}
%%%%%%%%%%%%%%%%%%%%%%%%%%%%%%%%%%%%%%
Here, we are interested in the porous medium case $m>1$ with $N \geq 3$. The corresponding energy functional writes
\begin{equation}\label{eq:functional}
 \mF[\rho]= \mH_m[\rho]+\chi\mW_k[\rho]
\end{equation}
with
\begin{equation*}
 \mH_m[\rho]=\frac{1}{m-1} \int_{\RR^N} \rho^m(x)\, dx\, , \qquad
 \mW_k[\rho]=\frac{1}{2}\iint_{\RR^N \times \RR^N} \frac{|x-y|^{k}}{k}\rho(x)\rho(y)\, dxdy\, .
\end{equation*}
Given $\rho \in \mY$, we see that $\mH_m$ and $\mW_k$ are homogeneous by taking dilations $\rho^\lambda(x):=\lambda^N \rho(\lambda x)$. More precisely, we
obtain
\begin{equation*}
 \mF[\rho^\lambda]=\lambda^{N(m-1)}\mH_m[\rho]+\lambda^{-k}\chi\mW_k[\rho]\,.
\end{equation*}
In other words, the diffusion and aggregation forces are in balance if $N(m-1)=-k$. This is the case for choosing the critical diffusion exponent $m_c:=1-k/N$ called the \emph{fair-competition regime}. In the \emph{diffusion-dominated regime} we choose $m>m_c$, which means that the diffusion part of the functional \eqref{eq:functional} dominates as $\lambda \to \infty$. In other words, concentrations are not energetically favourable for any value of $\chi>0$ and $m>m_c$. The range $0<m<m_c$ is referred to as the \emph{attraction-dominated regime}. In this work, we focus on the diffusion-dominated regime $m>m_c$.

Further, we define below the diffusion exponent $m^*$ that will play an important role for the regularity properties of global minimisers of $\mF$:
 \begin{equation}\label{mstar}
  m^*:=
  \begin{cases}
  \frac{2-k-N}{1-k-N}\, , \qquad &\text{if} \quad N\geq 1 \quad \text{and} \quad -N<k<1-N\, , \\
  + \, \infty &\text{if} \quad N\geq 2 \quad \text{and} \quad 1-N\leq k <0\, .
  \end{cases}
 \end{equation}

The main results in this work are summarised in the following two theorems:
\begin{theorem}\label{thm:main1}
Let $N\geq 1$, $\chi>0$ and $k \in (-N,0)$. All stationary states of equation \eqref{eq:KS} are radially symmetric decreasing. If $m>m_c$, then there exists a global minimiser $\rho$ of $\mF$ on $\mathcal{Y}$. Further, all global minimisers $\rho \in \mY$ are radially symmetric non-increasing, compactly supported, uniformly bounded and $C^{\infty}$ inside their support. Moreover, all global minimisers of $\mF$ are stationary states of \eqref{eq:KS},  according to Definition \ref{def:sstates},  whenever $m_c<m < m^*$.
Finally, if   $m_c<m\le2$,   we have $\rho \in
\mW^{1,\infty}\left(\RR^N\right)$.
\end{theorem}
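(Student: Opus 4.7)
The plan is to address the five assertions of Theorem \ref{thm:main1} in logical order: radial symmetry of stationary states, existence of a global minimiser, radial symmetry and qualitative properties of minimisers, identification of minimisers as stationary states for $m<m^*$, and the $W^{1,\infty}$ bound when $m\le 2$. The symmetry statements rest on rearrangement: since $k<0$, the kernel $-W_k=|x|^k/|k|$ is strictly symmetric-decreasing, so the Riesz rearrangement inequality gives a strict decrease of $\mW_k$ under symmetric-decreasing rearrangement unless $\rho$ already coincides with its rearrangement up to translation, while $\mH_m$ is invariant under rearrangement. This yields the symmetry of every global minimiser; for stationary states, a formal integration of $\nabla\rho^m=-\chi\rho\nabla S_k$ on each connected component of $\supp\rho$ produces the Euler--Lagrange identity $\frac{m}{m-1}\rho^{m-1}+\chi S_k=c$ on that component, to which the same rearrangement comparison applies.

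\textbf{Existence.} Restricting to radial symmetric-decreasing densities, I would run the direct method. A Hardy--Littlewood--Sobolev inequality combined with interpolation between $L^1$ and $L^m$ estimates $|\mW_k|$ by a sub-unit power of $\mH_m$, the sub-linearity being equivalent to $m>m_c$; this provides both a uniform lower bound for $\mF$ on $\mY$ and a uniform $L^m$ bound along a minimising sequence. For radially symmetric-decreasing densities with controlled mass and $L^m$ norm, the pointwise decay $\rho(x)\le C|x|^{-N/m}$ yields tightness. Lower semicontinuity of $\mH_m$ for weak $L^m$ convergence, together with continuity of $\mW_k$ along the tight sequence, delivers a minimiser.

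\textbf{Qualitative properties of minimisers.} A constrained first variation gives
\begin{equation*}
\tfrac{m}{m-1}\rho^{m-1}(x)+\chi(W_k\ast\rho)(x)=\mu \text{ on } \{\rho>0\}, \qquad \geq\mu \text{ elsewhere},
\end{equation*}
for some Lagrange multiplier $\mu$. Because $W_k<0$ and $(W_k\ast\rho)(x)\to 0$ as $|x|\to\infty$, the complementary inequality forces $\mu\le 0$; strict negativity follows because $\mu=0$ would impose $\rho(x)\sim|x|^{k/(m-1)}$ at infinity, which is not in $L^1$ precisely when $m>m_c$. With $\mu<0$, one has $\chi|W_k\ast\rho|<|\mu|/2$ outside a large ball, hence $\rho=0$ there and the support is compact. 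Uniform boundedness of $\rho$ then follows from a Moser/De Giorgi iteration based on this identity and the local integrability of $W_k$. For smoothness on $\{\rho>0\}$, the identity expresses $\rho^{m-1}$ as an affine function of $W_k\ast\rho$, and fractional elliptic regularity for $c_{N,s}(-\Delta)^s S_k=\rho$ iteratively upgrades $\rho$ to any Sobolev/H\"older class, hence to $C^\infty$.

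\textbf{Stationary states and $W^{1,\infty}$.} For $m_c<m<m^*$, the threshold \eqref{mstar} is chosen exactly so that the uniform $L^\infty$ bound together with the integrability of $\nabla W_k$ near the origin makes the principal value \eqref{gradS} well defined against $\rho$; differentiating the Euler--Lagrange identity on $\{\rho>0\}$ then yields the stationary state equation in the sense of Definition \ref{def:sstates}. When $m\le 2$, the exponent $1/(m-1)\ge 1$ makes $t\mapsto t_+^{1/(m-1)}$ locally Lipschitz, and since $\mu-\chi W_k\ast\rho$ is itself globally Lipschitz ($\rho\in L^\infty$ with compact support and $k>-N$ imply $\nabla(W_k\ast\rho)\in L^\infty$), the composition lies in $W^{1,\infty}(\RR^N)$. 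The main obstacles I expect are (i) the tightness of the minimising sequence in the existence step, where escape of mass to infinity must be ruled out in the critical regime $m=m_c^+$; and (ii) the rigorous passage from the distributional Euler--Lagrange identity to the PDE identity in the singular range $-N<k\le 1-N$, where $\nabla S_k$ is only defined as a Cauchy principal value and the threshold $m^*$ is precisely what makes the relevant integrands summable.
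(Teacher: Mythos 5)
Your proposal breaks down at two essential points.

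First, the radial symmetry of \emph{stationary states} cannot be obtained by the rearrangement argument you describe. The Riesz rearrangement inequality gives $\mF[\rho^{\#}]\le\mF[\rho]$ with an equality characterisation, which identifies the symmetry of \emph{global minimisers} (and this is exactly how the paper treats minimisers in Theorem \ref{thm:Emins}); but a stationary state in the sense of Definition \ref{def:sstates} is only a critical point satisfying \eqref{eq:EL}, with possibly different constants on different components of its support, and knowing that its rearrangement has lower or equal energy tells you nothing about the state itself, since it is not known a priori to minimise anything. The paper's proof of Theorem \ref{thm:radiality} uses a genuinely different mechanism: a \emph{continuous} Steiner symmetrisation $\mu(\tau,\cdot)$ (with a modified level-set speed $v(h)$) is constructed so that, if $\bar\rho$ were not radially non-increasing up to translation, the energy would decrease linearly, $\mF[\mu(\tau,\cdot)]-\mF[\bar\rho]\le -c_0\tau$, while stationarity forces the quadratic bound $|\mF[\mu(\tau,\cdot)]-\mF[\bar\rho]|\le C_2\tau^2$, a contradiction for small $\tau$. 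Without this, or some other argument exploiting stationarity rather than minimality, the first assertion of the theorem is unproved.

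Second, your treatment of the threshold $m^*$ and of the $W^{1,\infty}$ bound is wrong in the singular range $-N<k\le 1-N$. There $|\nabla W_k(z)|=|z|^{k-1}$ is \emph{not} locally integrable, so boundedness of $\rho$, even with compact support, does not make $\nabla W_k\ast\rho$ well defined; in particular your claim that ``$\rho\in L^\infty$ with compact support and $k>-N$ imply $\nabla(W_k\ast\rho)\in L^\infty$'' fails precisely where it is needed. What is required is H\"older regularity $\rho\in C^{0,\alpha}$ with $\alpha>1-k-N$, so that the principal value \eqref{gradS} converges, and the whole point of Theorem \ref{thm:regmin} is to produce $\rho\in C^{0,\min\{1,1/(m-1)\}}$ by a bootstrap through Bessel-potential and fractional Sobolev embeddings and the Schauder estimate \eqref{HR}; the geometric series $\sum_j 2s/(m-1)^j=2s(m-1)/(m-2)$ exceeds $1$ exactly when $m<m^*$. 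This bootstrap is the technical heart of the identification of minimisers as stationary states and of the $W^{1,\infty}$ conclusion, and it is absent from your proposal. On the remaining points your outline is broadly workable but follows different routes from the paper: existence is proved there via Lions' concentration compactness (checking $I_M<0$ by scaling) rather than the direct method in the radial class, and boundedness is proved not by Moser iteration but by comparing the energy with a competitor that relocates the mass above a level $H$ to a far-away ball, the key input being the cross-range interaction estimate of Lemma \ref{Newtonlemma}; your compact-support argument essentially matches the paper's.
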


\begin{theorem}\label{thm:main2}
 Let $N=1$, $\chi>0$, $k \in (-1,0)$ and $m>m_c$.  All stationary states of \eqref{eq:KS} are global minimisers of the energy functional $\mF$ on $\mY$. Further, stationary states of \eqref{eq:KS} in $\mY$ are unique.
\end{theorem}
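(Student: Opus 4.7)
\medskip

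\noindent\textbf{Proof plan for Theorem \ref{thm:main2}.}
The plan is to exploit Theorem \ref{thm:main1} and one-dimensionality to set up a sharp functional inequality whose equality cases coincide with stationary states of \eqref{eq:KS}, and then read off both minimality and uniqueness from the rigidity of that inequality.

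First, I would use Theorem \ref{thm:main1} to reduce the whole discussion to radially symmetric (i.e.\ symmetric about $0$) non-increasing profiles: any stationary state $\bar\rho\in\mY$ has this form, is compactly supported and bounded, and on its support satisfies the Euler--Lagrange relation
\begin{equation*}
\frac{m}{m-1}\bar\rho^{m-1}(x)+\chi\,(W_k\ast\bar\rho)(x)=C_{\bar\rho},
\end{equation*}
while being zero outside. Multiplying by $\bar\rho$ and integrating yields the compact identity $m\,\mH_m[\bar\rho]+2\chi\,\mW_k[\bar\rho]=C_{\bar\rho}$, which rewrites $\mF[\bar\rho]$ solely in terms of one scalar and $\mH_m[\bar\rho]$, a form well suited to comparison with arbitrary $\rho\in\mY$.

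Second, I would establish the functional inequality $\mF[\rho]\geq \mF[\bar\rho]$ for every $\rho\in\mY$, with equality forcing $\rho=\bar\rho$. The natural 1D tool is the monotone transport (Brenier/Knothe in 1D, explicitly the pseudo-inverse of the cumulative distribution) sending $\bar\rho$ onto $\rho$. Along this transport interpolation, the porous-medium entropy $\mH_m$ with $m>1$ is displacement convex by McCann's theorem, so the diffusive part contributes a non-negative second variation. The interaction term $\mW_k$ with $k\in(-1,0)$ is not displacement convex, so one must not use geodesic convexity directly; instead, I would linearise $\mF$ at $\bar\rho$, use the Euler--Lagrange identity to identify the first variation as the constant $C_{\bar\rho}$ on $\mathrm{supp}(\bar\rho)$, and then control the remainder via Riesz rearrangement (which in 1D already gives $\mW_k[\rho]\geq\mW_k[\rho^\ast]$) together with the displacement convexity of $\mH_m$. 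Equivalently, one can phrase the inequality as a Hardy--Littlewood--Sobolev type interpolation bound
\begin{equation*}
-\mW_k[\rho]\;\le\;C_\ast\,\mH_m[\rho]^{a}\,\|\rho\|_1^{b}
\end{equation*}
with exponents dictated by the scaling $\mF[\rho^\lambda]=\lambda^{N(m-1)}\mH_m[\rho]+\lambda^{-k}\chi\mW_k[\rho]$ and the subcritical balance $m>m_c$, whose optimisers one identifies with $\bar\rho$ via the Euler--Lagrange equation.

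Third, once the inequality is proved, every stationary state is a global minimiser of $\mF$ on $\mY$. Uniqueness follows in two strokes: the centering constraint in $\mY$ removes translation invariance, and the rigidity in the inequality (strict convexity of $\mH_m$ along transport combined with strict monotonicity in Riesz rearrangement for symmetric decreasing optimisers) forces any two stationary states to coincide.

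The main obstacle is the proof of the functional inequality: the interaction energy $\mW_k$ is genuinely not displacement convex for $k\in(-1,0)$, so one cannot argue by geodesic convexity of $\mF$ on $\mY$ alone. The one-dimensional setting is essential, as it provides an explicit monotone transport and the Riesz rearrangement reduction to radial profiles; the delicate step is to quantify the defect $\mW_k[\rho]-\mW_k[\bar\rho]$ along the transport in a way that can be absorbed by the displacement-convex defect of $\mH_m$, using only the Euler--Lagrange identity for $\bar\rho$ and the signed monotonicity of $t\mapsto t^{k-1}$ on $(0,\infty)$.
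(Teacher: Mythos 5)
You have the right framework (one-dimensional monotone transport, a functional inequality $\mF[\rho]\ge\mF[\bar\rho]$ whose rigidity gives uniqueness), which is indeed the paper's strategy, but the step you yourself flag as ``the main obstacle'' — absorbing the defect of the non-displacement-convex interaction term — is exactly the step you have not supplied, and neither of your two proposed mechanisms closes it. The paper's key ingredient is a \emph{pointwise} integral reformulation of the Euler--Lagrange equation (Lemma \ref{lem:char crit}),
\begin{equation*}
\bar\rho(p)^m=\frac{\chi}{2}\int_{\RR}\int_0^1|q|^k\,\bar\rho(p-sq)\,\bar\rho(p-sq+q)\,ds\,dq\,,
\end{equation*}
which expresses the diffusive integrand itself as an interaction-type double integral against $\bar\rho\otimes\bar\rho$. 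Substituting this into the transport representation of $\mH_m[\rho]$ puts the entropy and interaction contributions under one and the same double integral $\iint(\cdots)|a-b|^k\bar\rho(a)\bar\rho(b)\,da\,db$, where Jensen's inequality (Lemma \ref{lem:interaction-diffdom}) reduces everything to the scalar inequality $\tfrac{z^{1-m}}{m-1}+\tfrac{z^k}{k}\ge\tfrac1{m-1}+\tfrac1k$ for $z>0$, valid precisely because $m>m_c=1-k$, with equality iff $z=1$. Your integrated identity $m\,\mH_m[\bar\rho]+2\chi\,\mW_k[\bar\rho]=C_{\bar\rho}$ is a single scalar consequence of the Euler--Lagrange relation and is far too weak to play this role.

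Concretely, the two substitutes you offer both fail. Riesz rearrangement only compares $\rho$ with its own symmetric decreasing rearrangement; it gives no comparison between two distinct symmetric decreasing profiles $\rho$ and $\bar\rho$, so it cannot control $\mW_k[\rho]-\mW_k[\bar\rho]$ along the transport. And the interpolation bound $-\mW_k[\rho]\le C_*\mH_m[\rho]^a\|\rho\|_1^b$ cannot have the stationary states of \eqref{eq:KS} as optimisers in the diffusion-dominated regime: since $\mH_m$ and $\mW_k$ scale differently under dilation when $m>m_c$, any such homogeneous inequality is saturated (if at all) along a full one-parameter family of dilations, whereas the stationary state fixes a specific scale; the variational characterisation as optimisers of a homogeneous HLS-type inequality is special to the fair-competition regime $m=m_c$. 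The correct sharp inequality here is $\mF[\rho]\ge\mF[\bar\rho]$ itself, and proving it requires the pointwise characterisation above. Finally, your uniqueness step also leans on the unproven rigidity: in the paper, two stationary states are each shown to minimise via Theorem \ref{thm:HLSmequiv}, hence are dilations of one another, and the equality case $z=1$ of the scalar inequality (not the centering constraint) forces the dilation parameter to be $1$.
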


%%%%%%%%%%%%%%%%%%%%%%%%%%%%%%%%%%%%%%%%%%%%%%%%%%%
%%%%%%%%%%%%%%%%%%%%%%%%%%%%%%%%%%%%%%%%%%%%%%%%%%%
\begin{center}
 \vspace{0.1cm}
\def\svgwidth{400pt}
 %% Creator: Inkscape inkscape 0.48.4, www.inkscape.org
%% PDF/EPS/PS + LaTeX output extension by Johan Engelen, 2010
%% Accompanies image file '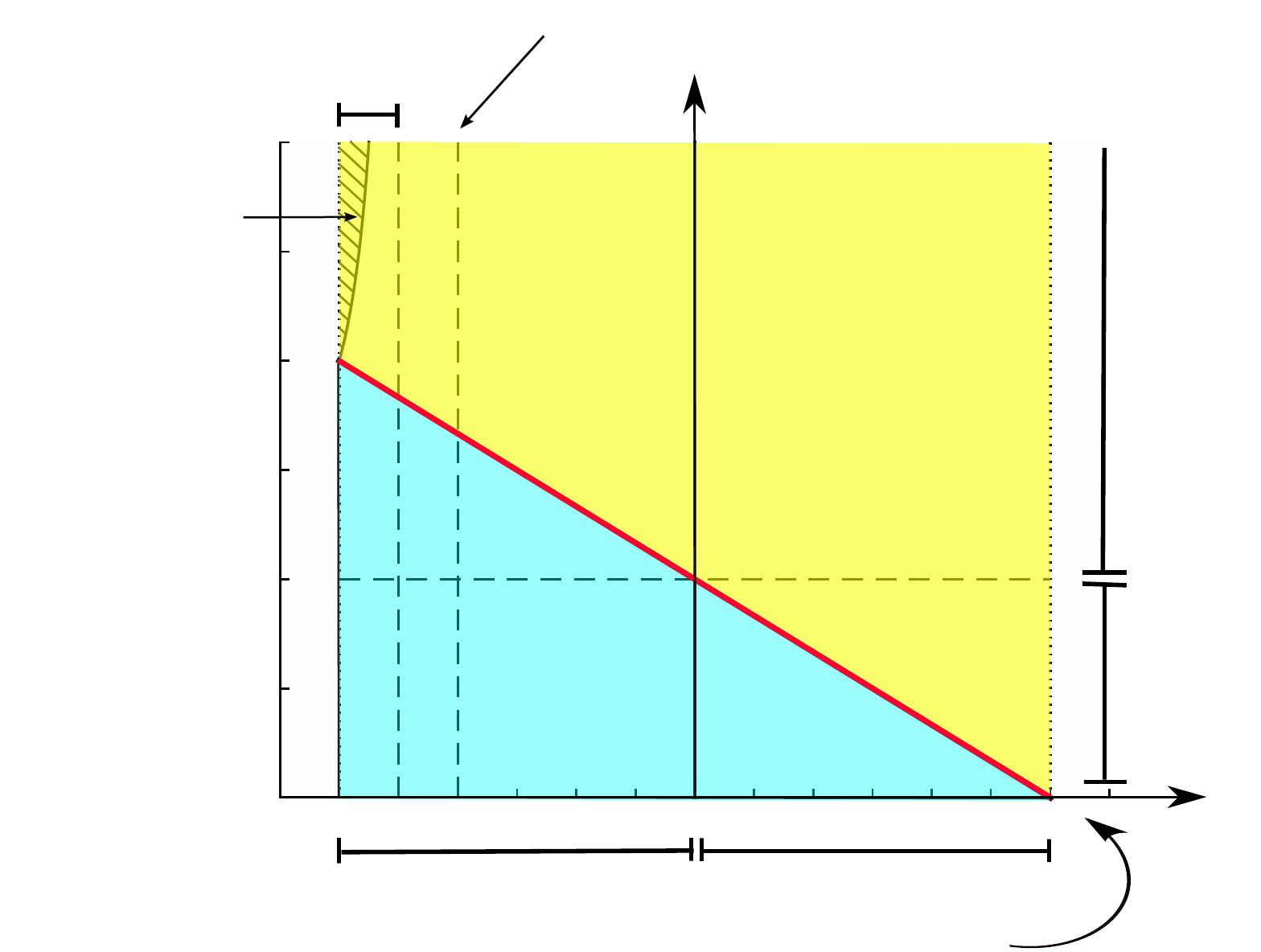' (pdf, eps, ps)
%%
%% To include the image in your LaTeX document, write
%%   \input{<filename>.pdf_tex}
%%  instead of
%%   \includegraphics{<filename>.pdf}
%% To scale the image, write
%%   \def\svgwidth{<desired width>}
%%   \input{<filename>.pdf_tex}
%%  instead of
%%   \includegraphics[width=<desired width>]{<filename>.pdf}
%%
%% Images with a different path to the parent latex file can
%% be accessed with the `import' package (which may need to be
%% installed) using
%%   \usepackage{import}
%% in the preamble, and then including the image with
%%   \import{<path to file>}{<filename>.pdf_tex}
%% Alternatively, one can specify
%%   \graphicspath{{<path to file>/}}
%% 
%% For more information, please see info/svg-inkscape on CTAN:
%%   http://tug.ctan.org/tex-archive/info/svg-inkscape
%%
\begingroup%
  \makeatletter%
  \providecommand\color[2][]{%
    \errmessage{(Inkscape) Color is used for the text in Inkscape, but the package 'color.sty' is not loaded}%
    \renewcommand\color[2][]{}%
  }%
  \providecommand\transparent[1]{%
    \errmessage{(Inkscape) Transparency is used (non-zero) for the text in Inkscape, but the package 'transparent.sty' is not loaded}%
    \renewcommand\transparent[1]{}%
  }%
  \providecommand\rotatebox[2]{#2}%
  \ifx\svgwidth\undefined%
    \setlength{\unitlength}{451.474408bp}%
    \ifx\svgscale\undefined%
      \relax%
    \else%
      \setlength{\unitlength}{\unitlength * \real{\svgscale}}%
    \fi%
  \else%
    \setlength{\unitlength}{\svgwidth}%
  \fi%
  \global\let\svgwidth\undefined%
  \global\let\svgscale\undefined%
  \makeatother%
  \begin{picture}(1,0.7552624)%
    \put(0,0){\includegraphics[width=\unitlength]{diagram_mk.pdf}}%
    \put(0.56651897,0.36603906){\color[rgb]{0,0,0}\rotatebox{-33.88550473}{\makebox(0,0)[b]{\smash{diffusion-dominated regime}}}}%
    \put(0.76324029,0.54745823){\color[rgb]{0,0,0}\makebox(0,0)[lt]{\begin{minipage}{0.35745097\unitlength}\centering porous\\  medium\\  regime\end{minipage}}}%
    \put(0.74451336,0.27061936){\color[rgb]{0,0,0}\makebox(0,0)[lt]{\begin{minipage}{0.40113897\unitlength}\centering fast\\  diffusion\\  regime\end{minipage}}}%
    \put(0.287187,0.06848074){\color[rgb]{0,0,0}\makebox(0,0)[lt]{\begin{minipage}{0.22073554\unitlength}\centering $W_k$ singular\end{minipage}}}%
    \put(0.55924824,0.06833508){\color[rgb]{0,0,0}\makebox(0,0)[lt]{\begin{minipage}{0.2649428\unitlength}\centering $W_k$ non-singular\end{minipage}}}%
    \put(0.26460525,0.09672648){\color[rgb]{0,0,0}\makebox(0,0)[b]{\smash{-N}}}%
    \put(0.27074102,0.11241406){\color[rgb]{0,0,0}\makebox(0,0)[lt]{\begin{minipage}{0.08485928\unitlength}\centering 1-N\end{minipage}}}%
    \put(0.32975351,0.11260777){\color[rgb]{0,0,0}\makebox(0,0)[lt]{\begin{minipage}{0.07783039\unitlength}\centering 2-N\end{minipage}}}%
    \put(0.52476866,0.11319501){\color[rgb]{0,0,0}\makebox(0,0)[lt]{\begin{minipage}{0.05509688\unitlength}\centering 0\end{minipage}}}%
    \put(0.79511893,0.11506601){\color[rgb]{0,0,0}\makebox(0,0)[lt]{\begin{minipage}{0.06886259\unitlength}\centering N\end{minipage}}}%
    \put(0.01438447,0.71194821){\color[rgb]{0,0,0}\makebox(0,0)[lt]{\begin{minipage}{0.53497704\unitlength}\centering $\nabla W_k \notin L^1_{loc }(\RR^N)$\end{minipage}}}%
    \put(0.931935,0.11704747){\color[rgb]{0,0,0}\makebox(0,0)[lt]{\begin{minipage}{0.08665337\unitlength}\centering $k$\end{minipage}}}%
    \put(0.51868103,0.71415717){\color[rgb]{0,0,0}\makebox(0,0)[lt]{\begin{minipage}{0.10965796\unitlength}\centering $m$\end{minipage}}}%
    \put(0.16667725,0.76115049){\color[rgb]{0,0,0}\makebox(0,0)[lt]{\begin{minipage}{0.53653914\unitlength}\centering $W_{2-N} =$ Newtonian potential\end{minipage}}}%
    \put(0.3005613,0.03056526){\color[rgb]{0.0745098,0.05882353,0.05882353}\makebox(0,0)[lt]{\begin{minipage}{0.50608062\unitlength}\centering fair-competition regime $m_c=1-k/N$\end{minipage}}}%
    \put(0.17136318,0.30349125){\color[rgb]{0,0,0}\makebox(0,0)[lt]{\begin{minipage}{0.0413924\unitlength}\centering 1\end{minipage}}}%
    \put(0.17370616,0.13714071){\color[rgb]{0,0,0}\makebox(0,0)[lt]{\begin{minipage}{0.03592548\unitlength}\centering 0\end{minipage}}}%
    \put(0.17292516,0.47921363){\color[rgb]{0,0,0}\makebox(0,0)[lt]{\begin{minipage}{0.03436348\unitlength}\centering 2\end{minipage}}}%
    \put(-0.01607404,0.5994859){\color[rgb]{0,0,0}\makebox(0,0)[lt]{\begin{minipage}{0.24210643\unitlength}\centering $m^*=\frac{2-k-N}{1-k-N}$\end{minipage}}}%
    \put(0.47248373,0.2465821){\color[rgb]{0,0,0}\rotatebox{-32.02922618}{\makebox(0,0)[b]{\smash{attraction-dominated regime}}}}%
  \end{picture}%
\endgroup%

 \vspace{-0.2cm}
 
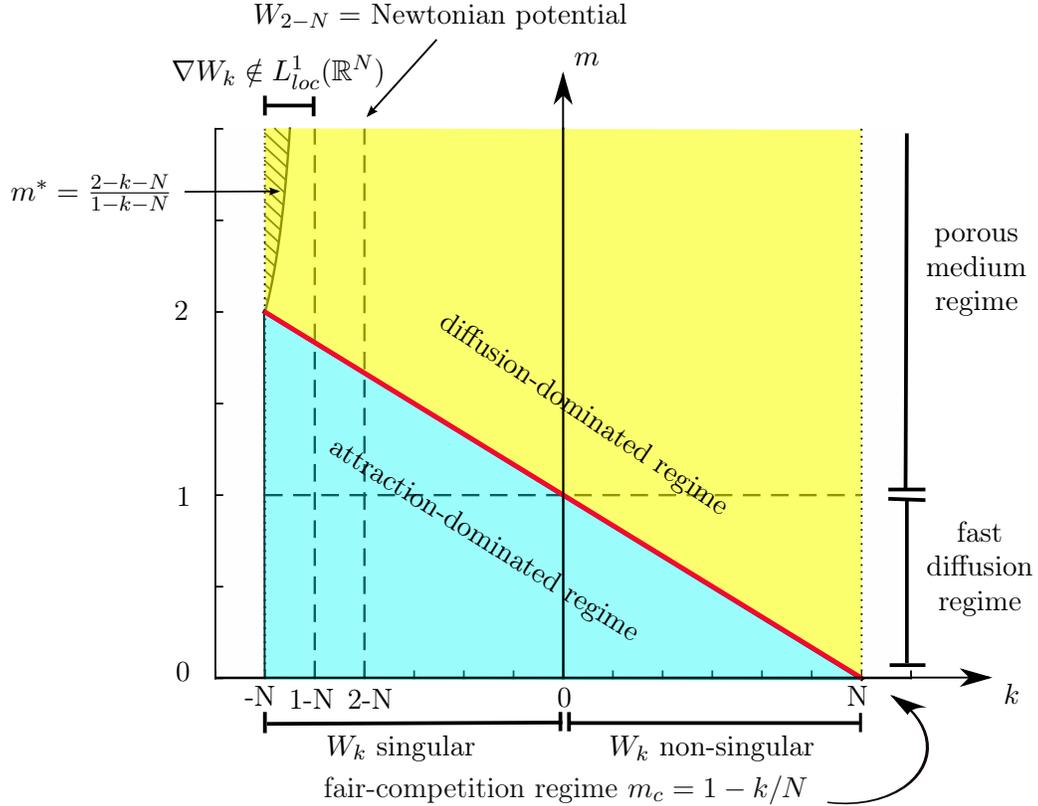
\captionof{figure}{Overview of the parameter space $(k,m)$ for $N\geq 3$: fair-competition regime ($m=m_c$, red line), diffusion-dominated regime ($m>m_c$, yellow region) and attraction-dominated regime ($m<m_c$, blue region).
 For $m=m_c$, attractive and repulsive forces are in balance (i.e. in \emph{fair competition}).
 For $m_c<m<m^*$ in the diffusion-dominated regime, global minimisers of $\mF$ are stationary states of \eqref{eq:KS}, see Theorem \ref{thm:main1}, a result which we are not able to show for $m\ge m^*$ (striped region).
 \vspace{-0.1cm}}
\label{fig:overviewregimes}
\end{center}
%%%%%%%%%%%%%%%%%%%%%%%%%%%%%%%%%%%%%%%%%%%%%%%%%%%%%%%
%%%%%%%%%%%%%%%%%%%%%%%%%%%%%%%%%%%%%%%%%%%%%%%%%%%%%%%

Aggregation-diffusion equations of the form \eqref{eq:KS} are ubiquitous as macroscopic models of cell motility due to cell adhesion and/or chemotaxis phenomena while taking into account volume filling constraints \cite{HiPai01,PaiHi02,CaCa06}. The non-linear diffusion models the very strong localised repulsion between cells while the attractive non-local term models either cell movement toward chemosubstance sources or attractive interaction between cells due to cell adhension by long filipodia. They encounter applications in cancer invasion models, organogenesis and pattern formation \cite{GC,DTGC,PBSG,MT15,CHS17}.

The archetypical example of the Keller-Segel model in two dimensions corresponding to the logarithmic case $(m = 1,k=0)$ has been deeply studied by many authors \cite{KeSe70,KeSe71a,Nanjundiah73,JaLu92,Nagai95,DoPe04,BlaDoPe06,Perthame06,BlaCaMa07,BCC,BCC12,CF,CLM14}, although there are still plenty of open problems. In this case, there is an interesting dichotomy based on a critical parameter $\chi_c>0$: the density exists globally in time if $0<\chi<\chi_c$ (diffusion overcomes self-attraction) and expands self-similarly \cite{CaDo14,EM16}, whereas blow-up occurs in finite time when $\chi>\chi_c$ (self-attraction overwhelms diffusion), while for $\chi=\chi_c$ infinitely many stationary solutions exist with intricated basins of attraction \cite{BCC12}. The three-dimensional configuration with Newtonian interaction $(m = 1,k = 2-N)$ appears in gravitational physics \cite{CLM,CM}, although it does not have this dichotomy, belonging to the attraction-dominated regime. However, the dichotomy does happen for the particular exponent $m=4/3$ of the non-linear diffusion for the 3D Newtonian potential as discovered in \cite{BCL}. This was subsequently generalised for the fair-competition regime where $m=m_c$ for a given $k\in(-N,0)$ in \cite{CCH1,CCHCetraro}.

In fact, as mentioned before two other different regimes appear: the diffusion-dominated case when $m>m_c$ and the attraction-dominated case when $m<m_c$. In Figure \ref{fig:overviewregimes}, we make a sketch of the different regimes including cases related to non-singular kernels for the sake of completeness. Note that non-singular kernels $k>0$ allow for values of $m<1$ corresponding to fast-diffusion behaviour in the diffusion-dominated regime $m>m_c$. We refer to \cite{CCH1,CCHCetraro} and the references therein for a full discussion of the state of the art in these regimes.

In the diffusion-dominated case, it was already proven in \cite{CCV} that global minimisers exist in the particular case of $m>1=m_c$ for the logarithmic interaction kernel $k=0$. Their uniqueness up to translation and mass normalisation is a consequence of the important symmetrisation result in \cite{CHVY} asserting that all stationary states to \eqref{eq:KS} for $2-N\leq k<0$ are radially symmetric. We will generalise this result to our present framework for the range $-N<k<2-N$ not included in \cite{CHVY} due to the special treatment needed for the arising singular integral terms. This is the main goal of Section \ref{sec:sstatesdiffdom} where we remind the reader the precise definition and basic properties of stationary states for \eqref{eq:KS}. In short, we show that stationary solutions are continuous compactly supported radially non-increasing functions with respect to their centre of mass. Some of these results are in fact generalisations of previous results in \cite{CCH1,CHVY} and we skip some of the details.

Let us finally comment that the symmetrisation result reduces the uniqueness of stationary states to uniqueness of radial stationary states that eventually leads to a full equivalence between stationary states and global minimisers of the free energy \eqref{eq:functional}. This was used in \cite{CHVY} to solve completely the 2D case with $m>1=m_c$ for the logarithmic interaction kernel $k=0$, and it was the new ingredient to fully characterise the long-time asymptotics of \eqref{eq:KS} in that particular case.

In view of the main results already announced above, we show in Section \ref{sec:minsdiffdom} the existence of global minimisers for the full range $m>m_c$ and $k\in (-N,0)$ which are steady states of the equation \eqref{eq:KS} as soon as $m<m^*$. This additional constraint on the range of non-linearities appears only in the most singular range $-N<k<1-N$ and allows us to get the right H\"older regularity on the minimisers in order to make sense of the singular integral in the gradient of the attractive non-local potential force \eqref{gradS}. 

Besides existence of minimisers, Section \ref{sec:minsdiffdom} contains some of the main novelties of this paper. First, in order to prove boundedness of minimisers, we develop  a fine estimate on the interaction term based on the asymptotics of the Riesz potential of radial functions, and show that this estimate is well suited exactly for the diffusion dominated regime (see Lemma \ref{Newtonlemma} and Theorem \ref{prop:Linfty}).
Moreover, thanks to the Schauder estimates for the fractional Laplacian, we improve the regularity results for minimisers in \cite{CCH1} and show that they are smooth inside their support, see Theorem \ref{cor:Cinfty}.  This result applies both to the diffusion dominated and fair competition regime.

These global minimisers are candidates to play an important role in the long-time asymptotics of \eqref{eq:KS}. We show their uniqueness in one dimension by optimal transportation techniques in Section \ref{sec:!diffdom}. The challenging open problems remaining are uniqueness of radially non-increasing stationary solutions to \eqref{eq:KS} in its full generality and the long-time asymptotics of \eqref{eq:KS} in the whole diffusion-dominated regime, even for non-singular kernels within the fast diffusion case.

Plan of the paper: In Section \ref{sec:sstatesdiffdom} we define and analyse stationary states, showing that they are radially symmetric and compactly supported.
Section \ref{sec:minsdiffdom} is devoted to global minimisers. We show that global minimisers exist, are bounded and we provide their regularity properties.
Eventually, Section \ref{sec:!diffdom} proves uniqueness of stationary states in the one-dimensional case.

%%%%%%%%%%%%%%%%%%%
%%%%%%%%%%%%%%%%%%%
\section{Stationary states}\label{sec:sstatesdiffdom}

Let us define precisely the notion of stationary states to the diffusion-aggregation equation \eqref{eq:KS}.

\begin{definition}\label{def:sstates}
 Given $\bar \rho \in L_+^1\left(\RR^N\right) \cap L^\infty\left(\RR^N\right)$ with $||\bar \rho||_1=1$   and letting $\bar S_k=W_k\ast\bar\rho$, we say that $\bar\rho$     is a \textbf{stationary state} for the
 evolution equation \eqref{eq:KS} if $\bar \rho^{m} \in \mW_{loc}^{1,2}\left(\RR^N\right)$, $\nabla \bar S_k\in L^1_{loc}\left(\RR^N\right)$, and
 it satisfies
 \begin{equation}\label{eq:sstates}
  \nabla \bar \rho^m=- \chi\, \bar \rho \nabla \bar S_k
 \end{equation}
in the sense of distributions in $\RR^N$.
%If $k>0$, we further require $|x|^k\bar \rho \in L^1\left(\RR^N\right)$.
If $-N<k\leq 1-N$, we further require $\bar \rho \in C^{0,\alpha}\left(\RR^N\right)$   for some   $\alpha \in (1-k-N,1)$.
\end{definition}
%%%%%%%%%%%%%%%%%%%%%%%%%%%%%%%%%
%%%%%%%%%%%%%%%%%%%%%%%

In fact, as shown in \cite{CCH1} via a near-far field decomposition argument of the drift term, the function $S_k$ and its gradient defined in \eqref{gradS} satisfy even more than the regularity $\nabla S_k \in
L_{loc}^1\left(\RR^N\right)$ required in Definition \ref{def:sstates}:

\begin{lemma}\label{lem:regS} Let $\rho \in L_+^1\left(\RR^N\right)\cap L^\infty\left(\RR^N\right)$ with $||\rho||_1=1$ and $k \in (-N,0)$. Then the
following regularity properties hold:
\begin{enumerate}[(i)]
\item $ S_k \in L^{\infty}\left(\RR^N\right)$.
\item $\nabla  S_k \in L^{\infty}\left(\RR^N\right)$, assuming additionally $\rho \in C^{0,\alpha}\left(\RR^N\right)$ with $\alpha \in (1-k-N,1)$
    in the range $k \in (-N,1-N]$.
\end{enumerate}
\end{lemma}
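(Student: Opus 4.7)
The plan is the classical near-far field decomposition: split the convolution integral into a ball $B_R(x)$ and its complement, using the $L^\infty$ bound on $\rho$ to absorb the local behaviour of the kernel and the $L^1$ bound to control the tail. In the most singular range of $k$, the Hölder hypothesis enters to make the near-field integrand absolutely integrable.

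For part (i), fix $R=1$ and write
$$S_k(x) = \int_{|x-y|<R} \frac{|x-y|^k}{k}\,\rho(y)\,dy + \int_{|x-y|\geq R} \frac{|x-y|^k}{k}\,\rho(y)\,dy.$$
The near-field integral is bounded by $\|\rho\|_\infty |k|^{-1}\int_{B_R}|z|^k\,dz$, which is finite because $k>-N$. On the far field $k<0$ gives $|x-y|^k\leq R^k$, so this piece is bounded by $R^k|k|^{-1}\|\rho\|_1$. Both bounds are uniform in $x$, proving $S_k\in L^\infty(\RR^N)$.

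For part (ii) in the range $1-N<k<0$, one has $|\nabla W_k(z)|=|z|^{k-1}$ with $k-1>-N$, so $|\nabla W_k|$ is locally integrable. The same splitting applied to $\nabla(W_k\ast\rho)$ yields
$$\|\nabla S_k\|_\infty \leq \|\rho\|_\infty \int_{B_R}|z|^{k-1}\,dz + R^{k-1}\|\rho\|_1,$$
since $|z|^{k-1}\leq R^{k-1}$ on the far field.

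The main obstacle is the singular range $-N<k\leq 1-N$, where $|z|^{k-1}$ is no longer locally integrable and the principal-value form of $\nabla S_k$ must be used. The key observation is that $\nabla W_k$ is odd, so that for any $T>R>0$,
$$\int_{R\leq |x-y|\leq T}\nabla W_k(x-y)\,dy=0$$
by symmetry of the annulus. Starting from the truncated integral $\int_{|x-y|<T}\nabla W_k(x-y)(\rho(y)-\rho(x))\,dy$ and passing to the limit $T\to\infty$, this cancellation allows us to rewrite
$$\nabla S_k(x) = \int_{|x-y|<R}\nabla W_k(x-y)(\rho(y)-\rho(x))\,dy + \int_{|x-y|\geq R}\nabla W_k(x-y)\rho(y)\,dy.$$
On the near field, the Hölder estimate gives integrand bounded by $[\rho]_{C^{0,\alpha}}|x-y|^{k-1+\alpha}$, and the assumption $\alpha>1-k-N$ is exactly what forces $k-1+\alpha>-N$, making the integral converge. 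On the far field, $|\nabla W_k(x-y)|\leq R^{k-1}$ together with $\rho\in L^1$ closes the estimate, and all constants are independent of $x$. The crucial ingredients are thus the symmetric cancellation that removes the non-integrable $\rho(x)$ piece and the sharp choice of the Hölder exponent $\alpha$.
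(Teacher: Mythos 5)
Your proof is correct and follows exactly the near--far field decomposition that the paper attributes to the cited reference [CCH1] for this lemma (the paper itself omits the proof and only names that strategy). In particular, your use of the odd-symmetry cancellation of $\nabla W_k$ over annuli to remove the non-integrable $\rho(x)$ contribution in the far field, together with the H\"older bound $|x-y|^{k-1+\alpha}$ with $\alpha>1-k-N$ in the near field, is the standard and intended argument.
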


Lemma \ref{lem:regS} implies further regularity properties for stationary states of \eqref{eq:KS}. For precise proofs, see \cite{CCH1}.

\begin{proposition} \label{prop:sstatesreg}
Let $k \in (-N,0)$ and $m>m_c$. If $\bar \rho$ is a stationary state of equation \eqref{eq:KS}   and $\bar S_k=W_k\ast\bar\rho$,   then $\bar \rho$ is continuous on $\RR^N$, $\bar \rho^{m-1} \in \mW^{1,\infty}\left(\RR^N\right)$, and
  \begin{equation}\label{eq:EL}
 \bar \rho(x)^{m-1} = \frac{m-1}{m} \left( C[\bar\rho](x)- \chi\bar S_k(x)\right)_+\, , \qquad
 \forall \, x \in \RR^N\, ,
\end{equation}
 where $C[\bar\rho](x)$ is constant on each connected component of $\supp(\bar \rho)$.
\end{proposition}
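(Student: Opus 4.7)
The plan is to upgrade the regularity of $\bar\rho$ from the distributional equation \eqref{eq:sstates}, then extract the pointwise identity \eqref{eq:EL} on the open set $\Omega := \{\bar\rho > 0\}$ by a chain-rule argument, and finally piece everything together across the free boundary $\partial\Omega$ using continuity.

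Applying Lemma \ref{lem:regS}, we get $\bar S_k \in \mW^{1,\infty}(\RR^N)$; in the singular range $-N<k\leq 1-N$ this uses the Hölder regularity assumption built into Definition \ref{def:sstates}. Combined with $\bar\rho\in L^\infty$, the right-hand side of \eqref{eq:sstates} is bounded, hence $\nabla \bar\rho^m \in L^\infty(\RR^N)$. Thus $\bar\rho^m$ is locally Lipschitz and $\bar\rho$ is continuous on $\RR^N$, so in particular $\Omega$ is open. On $\Omega$, $\bar\rho$ is locally bounded away from zero, so the chain rule yields $\bar\rho\,\nabla\bar\rho^{m-1} = \tfrac{m-1}{m}\nabla \bar\rho^m$ in the sense of distributions. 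Substituting \eqref{eq:sstates} and dividing by $\bar\rho>0$ on $\Omega$ gives
\[
\nabla\Big(\bar\rho^{m-1} + \chi\tfrac{m-1}{m}\bar S_k\Big) = 0 \quad \text{in } \mathcal{D}'(\Omega).
\]
Hence $\bar\rho^{m-1} + \chi\tfrac{m-1}{m}\bar S_k$ is constant on each connected component $\Omega_i$ of $\Omega$; calling this constant $\tfrac{m-1}{m} C_i$ and setting $C[\bar\rho]:=C_i$ on $\Omega_i$ (extended outside $\supp(\bar\rho)$ to any value satisfying $C[\bar\rho]-\chi\bar S_k\leq 0$ there, so that the positive part vanishes) recovers \eqref{eq:EL}. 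The global bound $\bar\rho^{m-1}\in\mW^{1,\infty}(\RR^N)$ then follows from \eqref{eq:EL} and the Lipschitz regularity of $\bar S_k$, since the positive part of a globally Lipschitz function is globally Lipschitz with the same constant and $\bar\rho^{m-1}$ vanishes on $\partial\Omega$.

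The main obstacle is the analysis at the free boundary $\partial\Omega$: one has to check that the constants $C_i$ on different components $\Omega_i$ are compatible with the vanishing of $\bar\rho^{m-1}$ on $\partial\Omega_i$, so that the positive part structure in \eqref{eq:EL} is consistent and the extension of $C[\bar\rho]$ outside the support is not obstructed. The continuity of $\bar\rho$ established in the first step forces $\bar\rho^{m-1}(x)\to 0$ as $x\to\partial\Omega_i$, giving $C_i - \chi\bar S_k(x) = 0$ at such boundary points. Making all this rigorous, especially in the singular regime where $\bar S_k$ is known only to be Lipschitz, is exactly the place where the Hölder hypothesis in Definition \ref{def:sstates} enters, and the detailed matching follows the blueprint of the analogous statements in \cite{CCH1}.
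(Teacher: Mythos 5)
Your proof is correct and follows the standard route that the paper itself defers to (the paper simply cites \cite{CCH1} for the details, offering no independent argument). The sequence — invoke Lemma \ref{lem:regS} to get $\bar S_k\in\mW^{1,\infty}(\RR^N)$ (using the H\"older hypothesis in Definition \ref{def:sstates} in the singular range), conclude $\nabla\bar\rho^m\in L^\infty$ and hence continuity of $\bar\rho$, pass to $\bar\rho^{m-1}$ via the chain rule on the open positivity set, divide by $\bar\rho$ to see $\bar\rho^{m-1}+\chi\tfrac{m-1}{m}\bar S_k$ is locally constant, and then reconstruct the global Lipschitz bound from continuity across the free boundary — is exactly the expected argument. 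One small point you gloss over but which is worth spelling out: $C[\bar\rho]-\chi\bar S_k$ is only \emph{piecewise} Lipschitz (it jumps across components), so the global Lipschitz bound for $\bar\rho^{m-1}=\tfrac{m-1}{m}(C[\bar\rho]-\chi\bar S_k)_+$ does not follow from "positive part of a Lipschitz function" alone; one should argue on segments, noting that on each maximal open interval where $\bar\rho^{m-1}\circ\gamma>0$ the curve stays in a single component and the restriction is Lipschitz with the uniform constant $\chi\tfrac{m-1}{m}\|\nabla\bar S_k\|_\infty$, while $\bar\rho^{m-1}\circ\gamma$ vanishes at the endpoints of such intervals and elsewhere; continuity then gives the global bound. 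This is precisely the boundary-matching you flagged as the "main obstacle," and your mechanism (continuity forcing $C_i-\chi\bar S_k\to 0$ on $\partial\Omega_i$) is the right one.
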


It follows from Proposition \ref{prop:sstatesreg} that $\bar \rho \in \mW^{1,\infty}\left(\RR^N\right)$ in the case   $m_c<m\le2$.

\

%%%%%%%%%%%%%%%%%%%%%%%%%%%%%%%%%%%%%%%%%%%%%%%%%%%%
%%%%%%%%%%%%%%%%%%%%%%%%%%%%%%%%%%%%%%%%%%%%%%%%%%%%

%%%%%%%%%%%%%%%%%%%%%%%%%%%%%%%%%%%%%%%%%%%%
%%%%%%%%%%%%%%%%%%%%%%%%%%%%%%%%%%%%%%%%%%%%
\subsection{Radial Symmetry of Stationary States}
The aim of this section is to prove that stationary states of \eqref{eq:KS} are radially symmetric.
This is one of the main results of \cite{CHVY}, and is achieved there under the assumption that the interaction kernel is not more singular than the Newtonian potential close to the origin.
As we will briefly describe in the proof of the next result, the main arguments continue to hold even for the more singular Riesz kernels $W_{k}$.
\begin{theorem}[Radiality of stationary states]\label{thm:radiality} Let $\chi>0$ and $m>m_c$. If   $\bar\rho \in L^1_+(\RR^N) \cap L^\infty(\RR^N)$ with $\|\bar\rho\|_1=1$   is a stationary state of \eqref{eq:KS} in the sense of Definition
\ref{def:sstates}, then $\bar\rho$ is radially symmetric   non-increasing up to a translation.
\end{theorem}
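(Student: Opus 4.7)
The plan is to adapt the continuous Steiner symmetrization strategy developed in \cite{CHVY} to the singular Riesz range $-N<k<2-N$ not originally treated there. The argument is by contradiction: assuming that $\bar\rho$ fails to be symmetric with respect to some hyperplane $H$ passing through its center of mass, one shows that the continuous Steiner symmetrization $\bar\rho^\tau$ of $\bar\rho$ with respect to $H$ produces competitors that strictly decrease the free energy $\mF$ at $\tau=0^+$, contradicting the Euler--Lagrange identity \eqref{eq:EL} satisfied by any stationary state. Two ingredients drive the contradiction. First, continuous Steiner symmetrization preserves the distribution function of $\bar\rho$, so $\mH_m[\bar\rho^\tau]=\mH_m[\bar\rho]$ for every $\tau$. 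Second, since $W_k(x)=|x|^k/k$ is, up to sign, a radially symmetric strictly decreasing function of $|x|$ (as $k<0$), the Riesz rearrangement inequality together with Burchard-type characterizations of equality cases yield that $\tau\mapsto \mW_k[\bar\rho^\tau]$ is non-increasing, with strict decrease whenever $\bar\rho$ is not already reflection-symmetric across $H$.

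The concrete steps are as follows. From Proposition \ref{prop:sstatesreg} combined with Lemma \ref{lem:regS}(i), the state $\bar\rho$ is continuous, bounded, and satisfies the pointwise identity \eqref{eq:EL}, with $C[\bar\rho]$ constant on each connected component of its support; a standard decay-at-infinity argument for $\bar S_k$ additionally yields compact support. Setting up $\bar\rho^\tau$ with respect to an arbitrary hyperplane $H$ and differentiating $\mF[\bar\rho^\tau]$ at $\tau=0^+$, the diffusion contribution vanishes by equimeasurability, while the interaction contribution rewrites, after one integration by parts, as an integral of the drift $\chi\nabla\bar S_k$ paired against the Steiner velocity field. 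Testing the stationary equation \eqref{eq:sstates} against this same velocity field shows that this integral coincides with an integral of $\nabla \bar\rho^m$ against the flow, and the structure of the Steiner flow forces the whole expression to vanish. Combined with the strict Riesz monotonicity above, this forces $\bar\rho$ to be symmetric across $H$; since $H$ is arbitrary through the center of mass, radial symmetry follows. Monotonicity in $|x|$ is then deduced from \eqref{eq:EL} together with the fact that the convolution $\bar S_k$ of a radial density against the radial kernel $W_k$ is itself monotone in $|x|$.

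The principal technical obstacle arises in the subrange $-N<k\leq 1-N$, where $\nabla W_k\notin L^1_{loc}(\RR^N)$ and the drift in \eqref{eq:sstates} is defined only as the principal value \eqref{gradS}. To justify both the derivative computation for $\mW_k[\bar\rho^\tau]$ and the distributional testing of the stationary equation against the Steiner velocity field, one crucially exploits the H\"older regularity $\bar\rho\in C^{0,\alpha}$ with $\alpha>1-k-N$ built into Definition \ref{def:sstates}; by Lemma \ref{lem:regS}(ii) this guarantees $\nabla \bar S_k\in L^\infty(\RR^N)$. The technique is to split $W_k$ into a singular portion supported in a small ball around the origin, where the principal-value cancellation combines with the H\"older modulus to yield the necessary integrability, and a regular far portion to which the standard Steiner symmetrization computations from \cite{CHVY} apply directly; one then passes to the limit as the cutoff radius shrinks. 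Beyond these singular integral estimates, the structural argument of \cite{CHVY} transfers verbatim.
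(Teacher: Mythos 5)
Your high-level strategy (contradiction via continuous Steiner symmetrisation, with special attention to the singular range $-N<k\leq 1-N$ where the H\"older regularity built into Definition \ref{def:sstates} is the key new ingredient) matches the paper's. But the mechanism you propose for deriving the contradiction is different from — and weaker than — the one the paper actually uses, and it has a genuine gap.

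You argue as follows: by equimeasurability $\mH_m[\bar\rho^\tau]$ is constant; by strict Riesz rearrangement, $\mW_k[\bar\rho^\tau]$ strictly decreases; and stationarity forces the first-order variation of $\mF$ along the Steiner flow to vanish, giving the contradiction. The problem is the middle step: the continuous Steiner symmetrisation is a rearrangement, not a flow of diffeomorphisms. There is no smooth vector field generating it, so "testing \eqref{eq:sstates} against the Steiner velocity field" and "integrating by parts in the interaction term" are not well-posed operations, and the derivative $\frac{d}{d\tau}\mF[\bar\rho^\tau]$ at $\tau=0^+$ need not exist (nor does "strict Riesz monotonicity" by itself furnish a quantitative linear rate of decrease). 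The paper avoids all of this. It establishes a \emph{quantitative} linear decay $\mF[\mu(\tau,\cdot)]-\mF[\bar\rho]\leq -c_0\tau$ \eqref{1ineq} together with a \emph{quadratic} upper bound $|\mF[\mu(\tau,\cdot)]-\mF[\bar\rho]|\leq C_2\tau^2$, and the contradiction is between these two for small $\tau$. The quadratic bound is not a derivative computation; it is a second-order Taylor estimate of the energy at the critical point, obtained from the Euler--Lagrange identity \eqref{eq:EL} (not the PDE form \eqref{eq:sstates}) together with the pointwise closeness $|\mu(\tau,x)-\bar\rho(x)|\leq C_1\bar\rho(x)\tau$ and mass preservation on each component.

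This closeness estimate is where a second gap appears. To get $|\mu(\tau,x)-\bar\rho(x)|\lesssim \bar\rho(x)\tau$ one cannot use the plain continuous Steiner symmetrisation $S^\tau\bar\rho$: near low level sets the rearrangement may move mass at a rate not controlled by the density itself. The paper therefore freezes the levels below a height $h_0$ by introducing the modified flow $\tilde S^\tau\bar\rho$ with velocity $v(h)=\bC_{\{h>h_0\}}$. This modification is essential, and it has a cost: the modified flow is no longer equimeasurable, so $\mH_m[\tilde S^\tau\bar\rho]\leq\mH_m[\bar\rho]$ holds only as an inequality, contrary to your claim that $\mH_m$ is exactly conserved. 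The H\"older regularity from Definition \ref{def:sstates} enters precisely here: it is used to show that the continuous Steiner symmetrisation preserves the modulus of continuity, so that the difference between $S^\tau\bar\rho$ and $\tilde S^\tau\bar\rho$ in the interaction energy \eqref{eq:goal1} can be controlled, yielding the quantitative linear decay of $\mW_k$ along the modified flow. Your idea of splitting $W_k$ into a near/far decomposition and using principal-value cancellation is a plausible alternative way to exploit the H\"older modulus, but it is a different route, and by itself it does not substitute for the modified-speed construction or the quadratic Taylor estimate — both of which your proposal omits.
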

\begin{proof}
The proof is based on a contradiction argument, being an adaptation of that in \cite[Theorem 2.2]{CHVY}, to which we address
the reader the more technical details. Assume that $\bar\rho$ is \emph{not} radially decreasing up to \emph{any} translation. By
Proposition \ref{prop:sstatesreg}, we have
\begin{equation}
\left|\nabla \bar \rho^{m-1}(x)\right|\leq c \label{boundgradpowerrho}
\end{equation}
for some positive constant $c$ in $\text{supp}(\bar\rho)$. Let us now introduce the \emph{continuous Steiner symmetrisation}  $S^\tau \bar\rho$ in direction $e_1 =
(1,0,\cdots,0)$ of $\bar\rho$ as follows. For any $x_1 \in \mathbb{R}, x'\in \mathbb{R}^{N-1}, h>0$, let
\begin{equation*}%\label{def:St}
S^\tau\bar\rho(x_1, x') := \int_0^\infty \bC_{M^\tau(U_{x'}^h)}(x_1) \,dh\,,
\end{equation*}
where
\[
U_{x'}^h = \{x_1 \in \mathbb{R}: \bar\rho(x_1, x')>h\}
\]
and $M^\tau(U_{x'}^h)$ is the continuous Steiner symmetrisation of the $U_{x'}^h$ (see \cite{CHVY} for the precise definitions and all the related properties). As in \cite{CHVY}, our aim is to show that there exists a continuous family of functions $\mu(\tau,x)$ such that $\mu(0,\cdot)=\bar\rho$ and some positive constants $C_{1}>0$, $c_{0}>0$ and
a small $\delta_{0}>0$ such that the following estimates hold for all $\tau\in[0,\delta_{0}]$:
\begin{equation}
\mF[\mu(\tau, \cdot)]-\mF[\bar\rho]\leq-c_{0}\tau\label{1ineq}
\end{equation}
\begin{equation}
|\mu(\tau,x)-\bar\rho(x)|\leq C_{1}\bar{\rho}(x)\tau\quad\quad\text{for all } x\in\RR^{N}\label{2ineq}
\end{equation}
\begin{equation}
\int_{\Omega_{i}}\left(\mu(\tau,x)-\bar\rho(x)\right)dx=0\quad\quad\text{for any connected component } \Omega_{i}\text { of }\text{supp}(\bar\rho).\label{3ineq}
\end{equation}
Following the arguments of the proof in \cite[Proposition 2.7]{CHVY}, if we want to construct a continuous family $\mu(\tau,\cdot)$ for \eqref{2ineq} to hold, it is convenient to modify suitably the continuous Steiner symmetrisation $S^\tau \bar\rho$ in order to have a better control of the speed in which the level sets $U_{x'}^h$  are moving. More precisely, we define $\mu(\tau,\cdot) = \tilde S^\tau \bar\rho$ as
\[
\tilde S^\tau\bar\rho_0(x_1, x') := \int_0^\infty \bC_{M^{v(h)\tau}(U_{x'}^h)}(x_1)\, dh
\]
with $v(h)$ defined as
\begin{equation*}
\label{def:v}
v(h) := \begin{cases}
1 & h> h_0\,,\\
0 & 0<h\leq h_0\,,
\end{cases}
\end{equation*}
for some sufficiently small constant $h_0>0$ to be determined. Note that this choice of the velocity is different to the one in \cite[Proposition 2.7]{CHVY} since we are actually keeping the level sets of $\tilde S^\tau \bar\rho(\cdot,x^{\prime})$ frozen below the layer at height $h_{0}$.  Next, we note that inequality \eqref{boundgradpowerrho} and the Lipschitz regularity of $\bar{S}_{k}$ (Lemma \ref{lem:regS}) are the only basic ingredients used in the proof of \cite[Proposition 2.7]{CHVY} to show that the family $\mu(\tau,\cdot)$ satisfies  \eqref{2ineq} and \eqref{3ineq}. Therefore, it remains to prove \eqref{1ineq}. Since different level sets of $\tilde S^\tau \bar\rho(\cdot,x^{\prime})$  are moving at different speeds $v(h)$, we do not have
$M^{v(h_1)\tau}(U_{x'}^{h_1}) \subset M^{v(h_2)\tau}(U_{x'}^{h_2}) $ for all $h_1>h_2$, but it is still possible to prove that (see \cite[Proposition 2.7]{CHVY})
\begin{equation*}
\label{eq:s_dec}
\mH_m[\tilde S^\tau\bar\rho] \leq \mH_m[\bar\rho] \text{ for all }\tau\geq 0.
\end{equation*}
Then, in order to establish \eqref{1ineq}, it is enough to
show
\begin{equation}\label{eq:goal0}
\mW_k[\tilde S^\tau\bar\rho] \leq \mW_k[\bar\rho] -\chi c_0 \tau \quad\text{ for all $\tau\in[0,\delta_0]$}, \text{ for some $c_0>0$ and $\delta_0>0$.}
\end{equation}
As in the proof of \cite[Proposition 2.7]{CHVY}, proving \eqref{eq:goal0} reduces to show that for sufficiently small $h_0>0$ one has
\begin{equation}
\label{eq:goal1}
\left|\mW_k[\tilde S^\tau\bar\rho] - \mW_k[ S^\tau\bar\rho]\right| \leq \frac{c\chi\tau}{2}\quad \text{ for all $\tau$}.
\end{equation}
To this aim, we write
\begin{equation*}
\label{eq:split1}
S^\tau\bar\rho(x_1, x') = \int_{h_0}^\infty \bC_{M^\tau(U_{x'}^h)}(x_1)dh + \int_{0}^{h_0} \bC_{M^\tau(U_{x'}^h)}(x_1)dh =: f_1(\tau,x)+f_2(\tau,x)
\end{equation*}
and we split $\tilde S^\tau \bar\rho$ similarly, taking into account that $v(h)=1$ for all $h>h_0$:
\begin{equation*}
\label{eq:split2}
\tilde S^\tau \bar\rho(x_1, x') = f_1(\tau,x) +  \int_{0}^{h_0} \bC_{M^{v(h)\tau}(U_{x'}^h)}(x_1)dh =: f_1(\tau,x) + \tilde f_2(\tau,x).
\end{equation*}
Note that
\[
f_{2}=S^{\tau}(\mathcal{T}^{h_{0}}\bar\rho)\, ,
\]
where $\mathcal{T}^{h_{0}}\bar\rho$ is the truncation at height $h_{0}$ of $\bar\rho$. Since $v(h)=0$ for $h\leq h_{0}$, we have
\[
\tilde f_2=\mathcal{T}^{h_{0}}\bar\rho.
\]
If we are in the singular range $k\in (-N,1-N]$, we have $\bar\rho \in C^{0,\alpha}\left(\RR^N\right)$ for some $\alpha \in (1-k-N,1)$. Since the continuous Steiner symmetrisation decreases the modulus of continuity (see \cite[Theorem 3.3]{Brockpolar} and \cite[Corollary 3.1]{Brockpolar}), we also have $S^\tau \bar\rho,\, f_{2},\,\tilde f_2\in C^{0,\alpha}\left(\RR^N\right)$. Further, Lemma \ref{lem:regS} and the arguments of \cite[Proposition 2.7]{CHVY} guarantee that the expressions
$$
 A_1(\tau):=\left|\int f_2 (W_k \ast f_1)-\tilde f_2 (W_k \ast f_1)dx\right|
 \quad\text{and}\quad
  A_2(\tau):=\left|\int f_2 (W_k \ast f_2)-\tilde f_2 (W_k \ast \tilde f_2)dx\right|
$$
can be controlled by $||\bar \rho||_\infty$ and the $\alpha$-H\"{o}lder seminorm of $\bar \rho$.
Hence, we can apply the argument in \cite[Proposition 2.7]{CHVY} to conclude for the estimate \eqref{eq:goal1}. Now it is possible to proceed exactly as in the proof of  \cite[Theorem 2.2]{CHVY} to show that for some positive constant $C_{2}$, we have the quadratic estimate
\[
\left|\mF[\mu(\tau,\cdot)]-\mF[\bar\rho]\right|\leq C_{2}\tau^{2}\, ,
\]
which is a contradiction with \eqref{1ineq} for small $\tau$.
\end{proof}

%%%%%%%%%%%%%%%%%%%%%%%%%%%%%%%%%%%%%%%%%%%%%%%%%%%%%%%%%%%%%%%%
%%%%%%%%%%%%%%%%%%%%%%%%%%%%%%%%%%%%%%%%%%%%%%%%%%%%%%%%%%%%%%%%
%%%%%%%%%%%%%%%%%%%%%%%%%%%%%%%%%%%%%%%%%%%%%%%%%%%%%%%%%%%%%%%%
\subsection{Stationary States are Compactly Supported}
In this section, we will prove that all stationary states of equation \eqref{eq:KS} have compact support, which agrees with the properties shown in \cite{KY,CCV,CHVY}. We begin by stating a useful asymptotic estimate on the Riesz potential inspired by \cite[\SS 4]{TS}. For the proof of Proposition \ref{prop:rieszestimate}, see Appendix \ref{sec:Rieszestimate}.
%%%%%%%%%%%%%%%%%%%%%%%%%%%%%%%%%%%%%%%%%%%%%%%%%%%%%%%%%%%%%%%%5
\begin{proposition}[Riesz potential estimates]\label{prop:rieszestimate}
Let $k \in (-N,0)$ and let $\rho\in \mY$ be radially symmetric.
\begin{enumerate}[(i)]
 \item If $1-N<k<0$, then $|x|^k\ast \rho(x)\le C_1 |x|^{k}$ on $\RR^N$.
 \item If $-N<k\leq 1-N$ and if $\rho$ is supported on a ball $B_R$ for some $R<\infty$, then
 \[
|x|^k\ast \rho(x)\le C_2 T_k(|x|,R)\, |x|^{k}\, , \qquad \forall\, \,  |x|>R\, ,
\]
where
\begin{equation}\label{Pik}
T_k(|x|,R):=\left\{\begin{array}{ll}
 \left(\frac{|x|+R}{|x|-R}\right)^{1-k-N}\quad&\mbox{if $k\in(-N,1-N)$},\\\\
 \left(1+\log\left(\frac{|x|+R}{|x|-R}\right)\right)\quad&\mbox{if $k=1-N$}
\end{array}\right.
\end{equation}
\end{enumerate}
Here, $C_1>0$ and $C_2>0$ are explicit   constants   depending only on $k$ and $N$.
\end{proposition}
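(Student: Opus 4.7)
\emph{Proof plan.} The proof reduces the convolution to a spherical average and extracts its precise asymptotics. Writing $a := |x|$, radial symmetry of $\rho$ gives
$$(|x|^k \ast \rho)(x) = \int_0^\infty \rho(r)\, r^{N-1}\, J(a, r)\, dr, \qquad J(a, r) := \int_{\mathbb{S}^{N-1}} |a e_1 - r \sigma|^k\, d\sigma.$$
For $N \ge 2$, the standard identity $\int_{\mathbb{S}^{N-1}} f(\sigma \cdot e_1)\, d\sigma = |\mathbb{S}^{N-2}| \int_{-1}^1 f(t)(1-t^2)^{(N-3)/2}\, dt$ combined with the substitution $u = (1-t)/2$ yields the explicit representation
$$J(a, r) = 2^{N-2}\, |\mathbb{S}^{N-2}| \int_0^1 \bigl((a-r)^2 + 4 a r u\bigr)^{k/2} (u(1-u))^{(N-3)/2}\, du.$$
The dimension $N = 1$ (only relevant to case (ii), where $k \in (-1, 0)$) is handled directly using the evenness of $\rho$ and the trivial pointwise bound $|x - y|^k \le (|x| - R)^k$ on the support of $\rho$, followed by the elementary comparison $(|x| - R)^k \le T_k(|x|, R)\, |x|^k$.

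For part (i), I would split the convolution into the three regions $|y| < a/2$, $|y| \in [a/2, 2a]$, and $|y| > 2a$. In the first region $|x - y| \ge a/2$ gives $|x-y|^k \le C a^k$, while in the third region combining $|x-y| \ge |y|/2$ with $|y| > 2a$ and $k < 0$ again gives $|x-y|^k \le C a^k$; each contribution is $\le C a^k \|\rho\|_1 = C a^k$. In the annulus, the key observation is that $r \ge a/2$ forces $(a - r)^2 + 4 a r u \ge 2 a^2 u$; inserting this in the explicit formula factors out $a^k$ and leaves a Beta integral $\int_0^1 u^{(k+N-3)/2}(1-u)^{(N-3)/2}\, du$ that converges \emph{precisely} because $k > 1 - N$, yielding $J(a, r) \le C a^k$ uniformly on the annulus and hence an annular contribution $\le C a^k$.

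For part (ii), $\rho$ is supported on $B_R$ with $a > R$, so only $r \in [0, R]$ contributes. The crucial manipulation is to factor $(a + r)^2$ out of the integrand through $c := (a - r)/(a + r) \in (0, 1)$: the identity $(a - r)^2 + 4 a r u = (a + r)^2 (c^2 + (1 - c^2) u)$ gives
$$J(a, r) = 2^{N-2}\, |\mathbb{S}^{N-2}|\, (a + r)^k\, I(c), \qquad I(c) := \int_0^1 \bigl(c^2 + (1 - c^2) u\bigr)^{k/2} (u(1-u))^{(N-3)/2}\, du.$$
The main obstacle is the sharp small-$c$ asymptotics of $I(c)$. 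Splitting $[0, 1]$ at $u = c^2$: on $[0, c^2]$ the factor $c^2 + (1 - c^2) u \approx c^2$ produces a term of size $c^{k + N - 1}$; on $[c^2, 1]$ the bound $c^2 + (1 - c^2) u \ge u/2$ reduces the integrand to $u^{(k + N - 3)/2}(1 - u)^{(N - 3)/2}$, whose non-integrable endpoint at $u = 0$ delivers another $c^{k + N - 1}$ for $k \in (-N, 1 - N)$ and exactly $\log(1/c)$ at the critical exponent $k = 1 - N$. This dichotomy is precisely what produces the two expressions in $T_k$. Plugging these bounds back into $J(a, r)$ and exploiting the monotonicities $(a - r)^{k + N - 1} \le (a - R)^{k + N - 1}$, $(a + r)^{1 - N} \le a^{1 - N}$, and $\log((a + r)/(a - r)) \le \log((a + R)/(a - R))$ for $r \in [0, R]$, together with $\int_0^R \rho(r)\, r^{N - 1}\, dr = 1/|\mathbb{S}^{N - 1}|$, yields the claimed estimates. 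A final trivial comparison $a^{1 - k - N} \le (a + R)^{1 - k - N}$ (valid since $1 - k - N > 0$ when $k < 1 - N$) matches the bound to $C_2\, T_k(a, R)\, a^k$ in the polynomial case; the logarithmic case at $k = 1 - N$ matches verbatim.
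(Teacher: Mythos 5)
Your argument is correct, but it takes a genuinely different route from the paper's. Both proofs begin with the same spherical reduction
\[
J(a,r)=2^{N-2}|\Sphere^{N-2}|\int_0^1\bigl((a-r)^2+4aru\bigr)^{k/2}(u(1-u))^{(N-3)/2}\,du,
\]
but from there the strategies diverge. The paper recognises this integral as a Gauss hypergeometric function $F(-k/2,(N-1)/2;N-1;z)$ with $z=4ar/(a+r)^2$ and then invokes \emph{classical hypergeometric asymptotics}: boundedness at $z=1$ when $c-a-b=(k+N-1)/2>0$ for part (i); the Euler transformation $F(a,b;c;z)=(1-z)^{c-a-b}F(c-a,c-b;c;z)$ to extract the power singularity $(1-z)^{(k+N-1)/2}$ when $k<1-N$; and the integrated derivative identity to obtain the $\log$ in the borderline case $k=1-N$. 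You instead carry out the asymptotics \emph{by hand}: a three-region split in $|y|$ for part (i), with the annular piece controlled by the pointwise lower bound $(a-r)^2+4aru\ge 2a^2u$ and a Beta integral; and for part (ii) the substitution $c=(a-r)/(a+r)$, followed by the split of the $u$-integral at $u=c^2$, which produces the competing $c^{k+N-1}$ and $\log(1/c)$ contributions directly. Your approach buys elementariness and self-containedness — no appeal to hypergeometric function theory — at the cost of explicit constants (the paper's $C_1,C_2$ come out in closed gamma-function form, while yours are tracked only qualitatively) and a couple of small edge cases you acknowledge implicitly but should make precise (the case $c>1/\sqrt2$ where the $[c^2,1]$ bound $c^2+(1-c^2)u\ge u/2$ fails, handled trivially since there is then no singularity to extract, and the $N=1$ reduction for part (ii)). These are cosmetic gaps, not structural ones; the proof as sketched is sound and recovers exactly the stated form of $T_k$.
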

%%%%%%%%%%%%%%%%%%%%%%%%%%%%%%%%%%%%%%%%%%%%%%%%%%%%%%%%%%%%%%%%%%%%%%%
From the above estimate, we can derive the expected asymptotic behaviour at infinity.
\begin{corollary}\label{cor:simpledecay}
Let $\rho\in \mY$ be radially   non-increasing.  Then $W_k\ast\rho$ vanishes at infinity, with decay  not
faster than that of $|x|^{k}$.
\end{corollary}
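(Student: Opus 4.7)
The plan is to prove the two independent assertions — vanishing of $W_k\ast\rho$ at infinity, and the lower bound $|(W_k\ast\rho)(x)|\gtrsim |x|^k$ as $|x|\to\infty$ — separately, by near/far splittings around $x$ and by exploiting the radial non-increasing hypothesis. Since $W_k=|x|^k/k$ with $k<0$, it suffices throughout to work with the positive kernel $|\cdot|^k$ and then divide by $k$.

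For the upper estimate leading to vanishing, in the non-singular range $k\in(1-N,0)$ Proposition~\ref{prop:rieszestimate}(i) gives directly $(|\cdot|^k\ast\rho)(x)\le C_1|x|^k\to 0$. In the singular range $k\in(-N,1-N]$ one cannot quote Proposition~\ref{prop:rieszestimate}(ii) verbatim because compact support of $\rho$ is not assumed. I would instead split the integral at $|y|=|x|/2$. On $\{|y|\le|x|/2\}$, the bound $|x-y|\ge|x|/2$ combined with $k<0$ gives $|x-y|^k\le 2^{-k}|x|^k$, and the contribution is at most $2^{-k}|x|^k\to 0$. For the complementary piece $\{|y|>|x|/2\}$ the key observation is that radial monotonicity together with $\|\rho\|_1=1$ forces the pointwise decay $\rho(y)\le C|y|^{-N}$, since $|B_{|y|}|\rho(y)\le\int_{B_{|y|}}\rho\le 1$. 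I would then split further at $|x-y|=1$: the part with $|x-y|\le 1$ is bounded by $C|x|^{-N}\int_{|z|\le 1}|z|^k\,dz$, which is finite because $k>-N$; the part with $|x-y|>1$ uses $|x-y|^k\le 1$ and reduces to the tail integral $\int_{|y|>|x|/2}\rho\,dy$, which vanishes as $|x|\to\infty$.

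For the lower bound, I would fix any $R>0$ large enough that $m_R:=\int_{B_R}\rho\ge 1/2$, which is possible because $\|\rho\|_1=1$. Then for $|x|\ge R$ and $y\in B_R$ one has $|x-y|\le|x|+R\le 2|x|$, so $k<0$ yields $|x-y|^k\ge 2^k|x|^k$. Restricting the convolution to $B_R$ gives $(|\cdot|^k\ast\rho)(x)\ge 2^k m_R|x|^k$ for all $|x|\ge R$, and dividing by $k$ transfers this into the desired lower bound $|(W_k\ast\rho)(x)|\ge c|x|^k$.

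The only mildly delicate step is the vanishing in the singular range $k\in(-N,1-N]$, where Proposition~\ref{prop:rieszestimate}(ii) is unavailable and must be replaced by the pointwise radial decay bound on $\rho$ that comes from $\|\rho\|_1=1$ and monotonicity; the remaining steps are routine near/far splittings that make essential use of $k\in(-N,0)$ to guarantee both local integrability of $|z|^k$ near $0$ and smallness of $|z|^k$ at infinity.
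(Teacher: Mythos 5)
Your proof is correct, and in the singular range it takes a genuinely different route from the paper's. For $k\in(-N,1-N]$ the paper keeps the kernel split $|y|^{k}=|y|^{k}\bC_{B_1}(y)+|y|^{k}\bC_{B_1^C}(y)$ centred at the singularity of $W_k$: on $B_1$ it uses integrability of $|y|^k$ together with $\sup_{y\in B_1}\rho(x-y)\to 0$, and on $B_1^C$ it bounds $|y|^{k}\le|y|^{k+r}$ with $r\in(1-k-N,1)$ so as to reduce the far part to Proposition \ref{prop:rieszestimate}(i) applied with the less singular exponent $k+r>1-N$. You instead split in the $\rho$-variable at $|y|=|x|/2$, use $\|\rho\|_1=1$ for the near-origin mass, and for the far mass exploit the pointwise bound $\rho(y)\le C|y|^{-N}$ (which is exactly what radial monotonicity plus unit mass gives) together with a further split at $|x-y|=1$; local integrability of $|z|^k$ for $k>-N$ and the vanishing tail $\int_{|y|>|x|/2}\rho$ finish the job. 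Your argument is more self-contained --- it handles the whole range $k\in(-N,0)$ uniformly without invoking Proposition \ref{prop:rieszestimate} for the vanishing claim at all --- while the paper's is shorter given that the Riesz potential estimate is already in hand. Your lower bound (restrict the convolution to a ball $B_R$ carrying at least half the mass and use $|x-y|\le 2|x|$ there) is the same idea as the paper's, which uses $B_1$ and the observation that $\int_{B_1}\rho>0$ for a radially non-increasing density of unit mass. All steps check out.
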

\begin{proof}
Notice that Proposition \ref{prop:rieszestimate}(i) entails the decay of the Riesz potential at infinity for $1-N<k<0$. Instead, let $-N<k\leq 1-N$. Let $r\in
(1-k-N,1)$ and notice that
$|y|^{k}\le |y|^{k+r}$ if $|y|\ge 1$, so that if $B_1$ is the unit ball centered at the origin we have
\[\begin{aligned}
|x|^k\ast\rho(x)&\le \int_{B_{1}}\rho(x-y)|y|^{k}\,dy+\int_{B_1^C}\rho(x-y)|y|^{k+r}\,dy\\
&\le \left( \sup_{y\in B_1}\rho(x-y)\right)\int_{B_1}|y|^{k}\,dy+(W_{k+r}\ast\rho)(x).
\end{aligned}\]
The first term in the right hand side vanishes as $|x|\to\infty$, since $y\mapsto |y|^{k}$ is integrable at the origin, and since $\rho$ is radially
  non-increasing   and vanishing at infinity as well. The second term goes to zero at infinity thanks to Proposition \ref{prop:rieszestimate}(i), since the choice of $r$ yields $k+r>1-N$.

  On the other hand, the decay at infinity of the Riesz potential can not be faster than that of $|x|^{k}$. To see this, notice that
 there holds
\[
|x|^k\ast\rho(x)\ge \int_{B_1}\rho(y)|x-y|^{k}\,dy \ge (|x|+1)^{k} \int_{B_1}\rho(y)\,dy
\]
 with $\int_{B_1}\rho>0$ since $\rho\in\mY$ is radially non-increasing.
\end{proof}
As a rather simple consequence of Corollary \ref{cor:simpledecay}, we obtain:
\begin{corollary}\label{cor:compactsupp}
 Let $\bar\rho$ be a stationary state of \eqref{eq:KS}. Then $\bar\rho$ is compactly supported.
\end{corollary}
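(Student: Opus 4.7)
The plan is to argue by contradiction, exploiting the radiality granted by Theorem \ref{thm:radiality} together with the Euler--Lagrange relation from Proposition \ref{prop:sstatesreg} and the two-sided decay information from Corollary \ref{cor:simpledecay}. After translating, $\bar\rho$ is radially non-increasing, so $\{\bar\rho>0\}$ is an open ball $B_R$ with $R\in(0,\infty]$, and compact support is equivalent to $R<\infty$. Assume for contradiction that $R=\infty$; then $\bar\rho>0$ on all of $\RR^N$, the support is connected, and Proposition \ref{prop:sstatesreg} provides a single constant $C\in\RR$ with
\begin{equation*}
\bar\rho(x)^{m-1}=\frac{m-1}{m}\bigl(C-\chi\bar S_k(x)\bigr)\qquad\text{for all }x\in\RR^N.
\end{equation*}

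Next, I would identify the constant $C$. Since $\bar\rho\in L^1(\RR^N)$ is radially non-increasing, $\bar\rho(x)\to 0$ as $|x|\to\infty$; by Corollary \ref{cor:simpledecay}, $\bar S_k(x)\to0$ as well. Passing to the limit in the Euler--Lagrange identity forces $C=0$, and hence
\begin{equation*}
\bar\rho(x)^{m-1}=-\frac{\chi(m-1)}{mk}\,\bigl(|\cdot|^{k}\ast\bar\rho\bigr)(x).
\end{equation*}
Because $k<0$ and $-1/k>0$, this is a positive multiple of $|\cdot|^k\ast\bar\rho$.

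The heart of the argument is then to use the lower bound built into Corollary \ref{cor:simpledecay}: its proof gives $(|\cdot|^k\ast\bar\rho)(x)\ge (|x|+1)^k\int_{B_1}\bar\rho\ge c\,|x|^k$ for $|x|$ large and some $c>0$. Inserting this into the previous display yields
\begin{equation*}
\bar\rho(x)\ge C'\,|x|^{k/(m-1)}\qquad\text{for $|x|$ sufficiently large},
\end{equation*}
with $C'>0$. Integrating this lower bound in spherical coordinates,
\begin{equation*}
\int_{|x|>R_0}\bar\rho(x)\,dx\ \ge\ C''\int_{R_0}^{\infty}r^{\frac{k}{m-1}+N-1}\,dr,
\end{equation*}
which is finite only if $\frac{k}{m-1}+N<0$, i.e., $m<1-k/N=m_c$. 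This contradicts the standing assumption $m>m_c$ together with $\|\bar\rho\|_1=1$, so $R<\infty$ and $\bar\rho$ is compactly supported.

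The only delicate point I anticipate is verifying that the Euler--Lagrange equality extends cleanly to the whole space in the unbounded-support case (so that a single constant $C$ really governs the relation as $|x|\to\infty$); this is already contained in Proposition \ref{prop:sstatesreg} once we know the support is connected, which follows for free from radial monotonicity. Everything else is an elementary matching of the decay rates of $\bar\rho$ and $W_k\ast\bar\rho$ against the integrability threshold $m_c$.
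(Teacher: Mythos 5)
Your proposal is correct and follows essentially the same route as the paper: contradiction via unbounded support, a single Euler--Lagrange constant forced to be $0$ by the decay of $\bar\rho^{m-1}$ and of $\bar S_k$ (Corollary \ref{cor:simpledecay}), and then the lower bound $|\cdot|^k\ast\bar\rho\gtrsim |x|^k$ forcing $\bar\rho\gtrsim|x|^{k/(m-1)}$, which is non-integrable precisely because $m>m_c$. The only difference is that you spell out the integrability threshold explicitly in spherical coordinates, which the paper leaves implicit.
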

\begin{proof}
By Theorem \ref{thm:radiality} we have that $\bar\rho$ is radially   non-increasing up to a translation. Since the translation of a stationary state is itself a stationary state, we may assume that $\bar\rho$ is radially symmetric with respect to the origin.    Suppose by contradiction that $\bar\rho$ is supported on the whole of $\mathbb{R}^N$, so that equation \eqref{eq:EL} holds on the whole
$\mathbb{R}^N$, with $C_{k}[\bar\rho](x)$ replaced by a unique constant $C$. Then we necessarily have $C=0$. Indeed, $\bar\rho^{m-1}$ vanishes at infinity since it is radially decreasing and
integrable, and  by Corollary \ref{cor:simpledecay} we have that   $\bar S_{k}=W_k\ast\bar\rho$   vanishes at infinity as well. Therefore $$\bar\rho=\left(\frac{\chi(m-1)}{m}\bar S_{k}\right)^{1/(m-1)}.$$
But Corollary \ref{cor:simpledecay} shows that $W_k\ast\rho$ decays at infinity not faster than $|x|^{k}$ and this would entail, since $m>m_c$, a decay at
infinity of $\rho$ not faster than   that of $|x|^{-N}$,   contradicting the integrability of $\rho$.
\end{proof}

%%%%%%%%%%%%%%%%%%%%%%%%%%
%%%%%%%%%%%%%%%%%%%%%%%%%%
%%%%%%%%%%%%%%%%%%%%%%%%%%
%%%%%%%%%%%%%%%%%%%%%%%%%%
\section{Global Minimisers}\label{sec:minsdiffdom}
%%%%%%%%%%%%%%%%%%%%%%%%%%
%%%%%%%%%%%%%%%%%%%%%%%%%%
%%%%%%%%%%%%%%%%%%%%%%%%%%
%%%%%%%%%%%%%%%%%%%%%%%%%%

We start this section 
by recalling a key ingredient for the analysis of the regularity of the drift term in \eqref{eq:KS}, i.e.  certain functional inequalities which are variants of the Hardy-Littlewood-Sobolev (HLS) inequality, also known as the weak Young's inequality \cite[Theorem 4.3]{LiebLoss}: for all $f \in L^p(\RR^N)$, $g \in L^q(\RR^N)$ there exists an optimal constant $C_{HLS}=C_{HLS}(p,q,k)>0$ such that
\begin{align}
&\left|\iint_{\RR^N\times \RR^N} f(x) {|x-y|^{k}} g(y)\, dxdy\right| \leq C_{HLS} \|f\|_{p} \|g\|_{q}\, ,  \label{eq:HLS}\\
&\text{if}\qquad \dfrac1p + \dfrac1q  = 2 + \dfrac k N \,  , \quad p,q>1\, ,\quad   k\in(-N,0) \, . \nonumber
\end{align}
The optimal constant $C_{HLS}$ is found in \cite{Lieb83}.
In the sequel, we will make use of the following variations of above HLS inequality:
%%%%%%%%%%%%%%%%%%%%%%%%%%%%%%%%%%%%%%%%%%%%%%%%%%HLSm
\begin{theorem}\label{thm:HLSm}
Let $k \in (-N,0)$, and $m>m_c$. For $f \in L^1(\RR^N) \cap L^m(\RR^N)$, we have
\begin{equation}\label{eq:HLSm}
\left|\iint_{\RR^N\times \RR^N} {|x-y|^{k}}f(x)f(y) dxdy\right| \leq C_* ||f||^{(k+N)/N}_1 ||f||^{m_c}_{m_c},
\end{equation}
where $C_*=C_*(k,m,N)$ is the best constant.
\end{theorem}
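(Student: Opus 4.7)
The plan is to derive the inequality as a direct consequence of the classical Hardy--Littlewood--Sobolev inequality \eqref{eq:HLS} applied with $f=g$ and a suitably chosen exponent, followed by a single $L^p$-interpolation between $L^1$ and $L^{m_c}$. No deeper machinery beyond these two ingredients should be needed; the constant $C_*$ is then defined as the infimum of all constants for which the stated inequality holds, which is non-empty by the argument below.

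First I would set $p=q$ in \eqref{eq:HLS}. The HLS condition $\tfrac{1}{p}+\tfrac{1}{q}=2+\tfrac{k}{N}$ forces
\[
p=\frac{2N}{2N+k},
\]
and a quick check shows $p>1$ (since $k<0$) and $p<\infty$ (since $k>-N$), so HLS is applicable. This yields
\[
\left|\iint_{\RR^N\times\RR^N} |x-y|^k f(x)f(y)\,dxdy\right|\leq C_{HLS}\,\|f\|_p^{\,2}.
\]

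Next I would interpolate $\|f\|_p$ between $\|f\|_1$ and $\|f\|_{m_c}$. Recalling $m_c=(N-k)/N$, I would solve
\[
\frac{1}{p}=\theta+\frac{1-\theta}{m_c}
\qquad\Longrightarrow\qquad
\theta=\frac{N+k}{2N}\in (0,\tfrac12),
\]
which is admissible because $p\le m_c$ (equivalent to $-k(N+k)\ge 0$, which holds on the range $-N<k<0$). Standard log-convexity of $L^p$ norms then gives $\|f\|_p\leq\|f\|_1^{\theta}\,\|f\|_{m_c}^{1-\theta}$, hence
\[
\|f\|_p^{\,2}\leq \|f\|_1^{2\theta}\,\|f\|_{m_c}^{2(1-\theta)}
= \|f\|_1^{(k+N)/N}\,\|f\|_{m_c}^{m_c},
\]
where I used $2\theta=(k+N)/N$ and $2(1-\theta)=(N-k)/N=m_c$. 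Combining this with the HLS bound produces \eqref{eq:HLSm} with explicit constant $C_{HLS}$; defining $C_*$ as the infimum over all admissible constants (finite by what has just been shown) gives the claim.

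Note that the hypothesis $m>m_c$ ensures, via interpolation between $L^1$ and $L^m$, that $f\in L^{m_c}$ as soon as $f\in L^1\cap L^m$, so the right-hand side of \eqref{eq:HLSm} is finite in the functional setting of the paper. There is essentially no obstacle in the argument itself: the only point requiring care is the computation of the interpolation exponent $\theta$ and the verification that the resulting powers of $\|f\|_1$ and $\|f\|_{m_c}$ coincide exactly with $(k+N)/N$ and $m_c$, which they do thanks to the algebraic identity $2+k/N = 1 + (N-k)/(2N)\cdot (N+k)/N$ hidden in the choice of $p$.
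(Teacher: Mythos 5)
Your proof is correct and follows essentially the same route as the paper, which simply invokes the sharp HLS inequality with $p=q=2N/(2N+k)$ followed by H\"older interpolation of $\|f\|_p$ between $\|f\|_1$ and $\|f\|_{m_c}$; your exponent computations ($\theta=(N+k)/(2N)$, $2\theta=(k+N)/N$, $2(1-\theta)=m_c$) all check out. The only blemish is the parenthetical ``algebraic identity'' $2+k/N=1+\tfrac{N-k}{2N}\cdot\tfrac{N+k}{N}$ at the end, which is false as written (e.g.\ $N=3$, $k=-1$), but it plays no role in the argument and can simply be deleted.
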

%%%%%%%%%%%%%%%%%%%%%%%%%%%%%%%%%%
\begin{proof} The inequality is a direct consequence of the standard sharp HLS inequality  and of
H\"{o}lder's inequality. It follows that $C_*$ is finite and bounded from above by the optimal constant in the HLS inequality.
\end{proof}

\subsection{Existence of Global Minimisers}
\begin{theorem}[Existence of Global Minimisers]\label{thm:Emins}
 For all $\chi>0$ and $k \in (-N,0)$, there exists a global minimiser $\rho$ of $\mF$ in $\mY$. Moreover, all global minimisers of $\mF$ in $\mY$ are radially non-increasing.
\end{theorem}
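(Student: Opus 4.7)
The plan is to combine Riesz rearrangement with a direct compactness argument for radially non-increasing sequences in $\mY$. First, to show $\inf_{\mY}\mF > -\infty$ and that $\|\rho\|_m$ is controlled along any minimising sequence, I would apply Theorem \ref{thm:HLSm} to get $-\mW_k[\rho] \le \tfrac{C_*}{|k|}\|\rho\|_{m_c}^{m_c}$, then interpolate between $L^1$ and $L^m$: since $1 < m_c < m$ in the diffusion-dominated regime and $\|\rho\|_1 = 1$, one has $\|\rho\|_{m_c}^{m_c} \le \|\rho\|_m^{\alpha}$ with $\alpha = m(m_c-1)/(m-1) < m$. This yields
\[
\mF[\rho] \;\ge\; \frac{1}{m-1}\|\rho\|_m^m \;-\; \frac{C_*\chi}{|k|}\|\rho\|_m^{\alpha},
\]
a function of $\|\rho\|_m$ that is bounded below and coercive.

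Second, since $k<0$ the kernel $|x-y|^k/k$ is radially non-decreasing in $|x-y|$, so the Riesz rearrangement inequality gives $\mW_k[\rho^*]\le\mW_k[\rho]$ while $\mH_m[\rho^*]=\mH_m[\rho]$. Thus a minimising sequence $\{\rho_n\}$ may be replaced by its symmetric decreasing rearrangements $\{\rho_n^*\}$, which automatically lie in $\mY$. Combined with the uniform bounds $\|\rho_n^*\|_1 = 1$ and $\|\rho_n^*\|_m\le C$, radial monotonicity yields the pointwise estimate $\rho_n^*(x)\le C|x|^{-N/m}$, and Helly's theorem applied to the monotone radial profiles produces a subsequence converging pointwise a.e.\ to some radially non-increasing $\rho$; Fatou then gives $\mH_m[\rho]\le\liminf\mH_m[\rho_n^*]$.

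The delicate step, which I expect to be the main obstacle, is to pass to the limit in $\mW_k$ and to exclude loss of mass at infinity. For $\mW_k$, I would split $W_k$ into a near part ($|x-y|\le r$) controlled uniformly in $n$ by Theorem \ref{thm:HLSm} and made arbitrarily small as $r\to 0$, and a far part, which is bounded and regular enough for dominated convergence against $C|x|^{-N/m}|y|^{-N/m}$. To exclude concentration of mass at infinity, I would invoke a Lions-type concentration--compactness argument: the strict subadditivity of $I(\mu):=\inf\{\mF[\rho]:\rho\ge0,\,\|\rho\|_1=\mu\}$ on $(0,1]$ rules out the splitting alternative, and this strict subadditivity follows from optimising the spatial rescaling $\rho\mapsto\lambda^N\rho(\lambda\cdot)$, which admits an interior optimum precisely because $N(m-1)>-k$ in the diffusion-dominated regime (cf.\ the scaling analysis in the introduction). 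Together these show $\|\rho\|_1=1$ and $\mF[\rho]\le\liminf\mF[\rho_n^*]$, so $\rho$ is a minimiser.

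For the final claim, let $\rho$ be any minimiser in $\mY$. Then $\mF[\rho^*]\le\mF[\rho]$ by the above, and equality forces equality in Riesz's rearrangement inequality; Lieb's sharp version then tells us that $\rho$ is a translate of its symmetric decreasing rearrangement. The zero centre-of-mass constraint pins the translate to the origin, so $\rho$ itself is radially non-increasing.
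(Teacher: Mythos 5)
Your strategy is essentially the paper's: the existence part rests on Lions-type concentration--compactness, with the negativity of the infimum coming from the dilation scaling $N(m-1)>-k$, and radial monotonicity of \emph{all} minimisers comes from Riesz rearrangement plus Lieb's strict version (\cite[Theorem 3.7]{LiebLoss}) and the centre-of-mass normalisation. The paper simply invokes \cite[Theorem II.1, Corollary II.1]{L} as black boxes after checking $W_k\in\mathcal{M}^{-N/k}(\RR^N)$ and exhibiting a normalised characteristic function of a large ball with negative energy; it then upgrades the minimiser to $L^m$ a posteriori via HLS and H\"older. You instead derive coercivity in $\|\cdot\|_m$ up front (your interpolation exponent $\alpha=m(m_c-1)/(m-1)<m$ is correct) and sketch a hands-on compactness proof via rearranged minimising sequences and Helly's theorem. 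Both routes work; yours is more self-contained but also carries more technical burden.

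Two steps of your sketch are imprecise as written, though both are repairable. First, Theorem \ref{thm:HLSm} controls the full interaction integral, not the near-diagonal piece $\{|x-y|\le r\}$; to make that piece small uniformly in $n$ as $r\to0$ you need to rerun the HLS/Young argument with the truncated kernel $|x|^k\ind_{\{|x|\le r\}}$, whose relevant $L^q$ norm vanishes as $r\to 0$. Second, the proposed dominating function $C|x|^{-N/m}|y|^{-N/m}$ is not integrable on $\RR^N\times\RR^N$, so dominated convergence does not apply directly to the far part; once tightness is secured, the cleaner route is to upgrade a.e.\ convergence of the radial profiles to strong $L^p$ convergence for $1\le p<m$ (Vitali, using the decay bounds $\rho_n^*(x)\le C\min\{|x|^{-N},|x|^{-N/m}\}$) and pass to the limit in $\mW_k$ by HLS continuity. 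Finally, note that the interior-optimum scaling argument by itself only yields $I_M<0$; converting this into the strict subadditivity \eqref{cc1} requires the mass-dilation argument behind \cite[Corollary II.1]{L} (which uses $W_k(tx)=t^{k}W_k(x)$ and $1+k/N>0$), exactly as the paper does.
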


We follow the concentration compactness argument as applied in Appendix A.1 of \cite{KY}. Our proof is based on \cite[Theorem II.1, Corollary II.1]{L}. Let us denote by $\mathcal{M}^p(\RR^N)$ the Marcinkiewicz space or weak $L^p$ space.
%%%%%%%%%%%%%%%%%%%%%%%%%%%%%%%%%%%%%%%%%%%
\begin{theorem}{(see \cite[Theorem II.1]{L})}\label{Lionsthm}
 Suppose $W\in \mathcal{M}^p(\RR^N)$, $1<p<\infty$, and consider the problem
 \begin{equation*}
  I_M = \inf_{\rho \in \mY_{q,M}} \left\{\frac{1}{m-1} \int_{\RR^N} \rho^m dx + \frac{\chi}{2}\iint_{\RR^N\times \RR^N} W(x-y)\rho(x)\rho(y)\,dxdy\right\}\, .
 \end{equation*}
 where
 \begin{equation*}
  \mY_{q,M}=\left\{\rho \in L^q(\RR^N)\cap L^1(\RR^N)\, , \,  \rho \geq 0\, \,  a.e., \int_{\RR^N} \rho(x) \, dx=M\right\}\, ,
  \qquad q=\frac{p+1}{p}<m\, .
 \end{equation*}
 Then there exists a minimiser of problem $(I_M)$ if the following holds:
\begin{equation}\label{cc1}
 I_{M_0} < I_{M} + I_{M_0-M}
 \qquad \text{for all} \, \, M \in (0,M_0)\, .
\end{equation}
\end{theorem}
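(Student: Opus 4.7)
The plan is to carry out the concentration-compactness scheme of \cite{L}. Let $\{\rho_n\}\subset \mY_{q,M_0}$ be a minimizing sequence with energy $E[\rho_n]:=\frac{1}{m-1}\int\rho_n^m\,dx+\frac{\chi}{2}\iint W(x-y)\rho_n(x)\rho_n(y)\,dx\,dy\to I_{M_0}$. Since $W\in\mathcal{M}^p(\RR^N)$, a weak Young-type inequality together with interpolation between $L^1$ (with fixed mass $M_0$) and $L^m$, using $q<m$, bounds the interaction term by $C(M_0)\|\rho_n\|_m^\beta$ with $\beta<m$. Young's inequality absorbs this into a fraction of the diffusion term, giving $\sup_n\|\rho_n\|_m<\infty$, and hence $\sup_n\|\rho_n\|_q<\infty$ by interpolation.

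Next I apply Lions' concentration-compactness lemma to $(\rho_n)$. Up to a subsequence, exactly one of three alternatives holds: compactness after translation, vanishing ($\sup_x\int_{B_R(x)}\rho_n\to 0$ for every $R>0$), or dichotomy at some mass $\alpha\in(0,M_0)$. Under vanishing, the $L^m$-bound forces $\|\rho_n\|_q\to 0$, so the interaction term disappears in the limit and $\liminf E[\rho_n]\ge 0$; this contradicts a strict upper bound $I_{M_0}<0$ that I would extract by testing with a suitably concentrated competitor combined with \eqref{cc1} applied in its small-mass form.

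Under dichotomy, I would decompose $\rho_n=\rho_n^1+\rho_n^2+r_n$ with $\|r_n\|_1\to 0$, $\|\rho_n^i\|_1\to\alpha_i$, $\alpha_1+\alpha_2=M_0$, and $\mathrm{dist}(\mathrm{supp}\,\rho_n^1,\mathrm{supp}\,\rho_n^2)\to\infty$. Disjoint supports make the diffusion term split additively, while the cross interaction $\iint W(x-y)\rho_n^1(x)\rho_n^2(y)\,dx\,dy$ vanishes because $W\in\mathcal{M}^p$ decays at infinity and the two supports separate. Passing to the $\liminf$ yields $I_{M_0}\ge I_{\alpha_1}+I_{\alpha_2}$, in direct contradiction with \eqref{cc1}. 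Only the compactness alternative survives; combined with the $L^m$-bound it produces, up to translation and subsequence, $\rho_n\rightharpoonup\rho$ weakly in $L^m$ and $\rho_n\to\rho$ strongly in every $L^r$ with $r\in[1,m)$, so that $\rho\in\mY_{q,M_0}$. Weak lower semicontinuity of $\rho\mapsto\int\rho^m$ and continuity of the interaction under strong $L^q$-convergence then deliver $E[\rho]\le I_{M_0}$, so $\rho$ is the desired minimizer.

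The principal obstacle is handling the dichotomy cross-term: because $W$ lies in the Marcinkiewicz class $\mathcal{M}^p$ rather than in any Lebesgue space globally, proving $\iint W(x-y)\rho_n^1(x)\rho_n^2(y)\,dx\,dy\to 0$ requires a two-scale splitting $W=W\mathbf{1}_{|x|\le R}+W\mathbf{1}_{|x|>R}$; the bounded near-field piece becomes negligible once the two supports are more than $R$ apart, while the tail is controlled uniformly in $n$ by $\|W\|_{\mathcal{M}^p}$ together with the uniform $L^q$-bounds on $\rho_n^i$, and sending $R\to\infty$ concludes. A secondary delicacy is ruling out vanishing, since one must manufacture a strictly negative upper bound on $I_{M_0}$ from the abstract subadditivity hypothesis \eqref{cc1} alone.
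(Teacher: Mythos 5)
This theorem carries no proof in the paper---it is quoted verbatim from Lions \cite[Theorem II.1]{L} and used as a black box---so the only meaningful comparison is with Lions' original argument, and your sketch is precisely that standard concentration-compactness proof (a priori $L^m$ bound via weak Young plus interpolation, the trichotomy, vanishing excluded because $I_{M_0}<0$, dichotomy excluded by \eqref{cc1} after the two-scale splitting of the cross term, compactness yielding the minimiser); it is essentially correct and essentially the same route. The one step worth tightening is your derivation of $I_{M_0}<0$: a ``concentrated competitor'' is not the right device in this abstract setting (the sign of $W$ is not prescribed); instead use dilations $\rho_\lambda(x)=\lambda^N\rho(\lambda x)$ with $\lambda\to 0$, for which $\int\rho_\lambda^m\to 0$ and $\bigl|\iint W(x-y)\rho_\lambda(x)\rho_\lambda(y)\,dx\,dy\bigr|\le C\|W(\cdot/\lambda)\|_{\mathcal{M}^p}\|\rho\|_r^2=C\lambda^{N/p}\|W\|_{\mathcal{M}^p}\|\rho\|_r^2\to 0$, so that $I_M\le 0$ for every $M>0$ and then \eqref{cc1} with any $M\in(0,M_0)$ gives $I_{M_0}<I_M+I_{M_0-M}\le 0$.
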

%%%%%%%%%%%%%%%%%%%%%%%%%%%%%%%%%%%%%%%%%%%%%%%%%%%%%%%%%%%%%%%%%
\begin{proposition}{(see \cite[Corollary II.1]{L})}\label{Lionsprop}
 Suppose there exists some $\lambda \in (0,N)$ such that
 $$
 W(tx) \geq t^{-\lambda}W(x)
 $$
 for all $t\geq 1$. Then \eqref{cc1} holds if and only if
 \begin{equation}\label{cc2}
 I_{M} < 0
 \qquad \text{for all} \, \,M>0\, .
\end{equation}
\end{proposition}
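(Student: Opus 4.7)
The plan is to prove the non-trivial direction \eqref{cc2} $\Rightarrow$ \eqref{cc1} via a scaling argument exploiting the homogeneity hypothesis, and then dispose of the reverse direction by a short contradiction. Throughout I write $E[\rho]$ for the functional to be minimised in the definition of $I_M$.

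First I would establish the scaling inequality $I_{tM} \leq t^{\alpha} I_M$ for all $t \geq 1$, $M > 0$, with some $\alpha > 1$. Given $\rho \in \mY_{q,M}$, introduce the mass-changing rescaling $\widetilde \rho(x) := t^{1 + bN} \rho(t^b x)$, of total mass $tM$. A direct computation gives $\int \widetilde\rho^m\,dx = t^{\,m + bN(m-1)} \int \rho^m\,dx$, and the interaction term transforms as $\iint W(x-y)\widetilde\rho(x)\widetilde\rho(y)\,dx\,dy = t^{2}\iint W(z/t^b)\rho(x)\rho(y)\,dx\,dy$. Choosing $b \geq 0$ so that $t^b \geq 1$, the homogeneity hypothesis combined with $W \leq 0$ (the case relevant to the application) yields the pointwise bound $W(z/t^b) \leq t^{b\lambda} W(z)$, hence $\iint W\widetilde\rho\widetilde\rho \leq t^{2 + b\lambda}\iint W\rho\rho$. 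Selecting $b$ to balance the two exponents---namely $b = (2-m)/(N(m-1)-\lambda)$, admissible in the regime at hand---produces a common power $\alpha = 1 + (m-1)(1 + bN)$ which is strictly greater than $1$ precisely because $\lambda < N$. Hence $E[\widetilde\rho] \leq t^\alpha E[\rho]$, and taking the infimum over $\rho$ yields the claimed scaling inequality.

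Under \eqref{cc2}, $I_M/M < 0$, so the scaling inequality gives $I_{tM}/(tM) \leq t^{\alpha - 1} I_M/M < I_M/M$ for every $t > 1$; that is, the map $M \mapsto I_M/M$ is strictly decreasing on $(0,\infty)$. For $0 < M < M_0$, setting $M_1 := M_0 - M > 0$, the strict monotonicity applied once with $t = M_0/M$ and once with $t = M_0/M_1$ gives the two strict inequalities $(M/M_0)I_{M_0} < I_M$ and $(M_1/M_0)I_{M_0} < I_{M_1}$; summing them and using $M + M_1 = M_0$ yields $I_{M_0} < I_M + I_{M_1}$, which is \eqref{cc1}. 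The reverse direction \eqref{cc1} $\Rightarrow$ \eqref{cc2} is short: first one checks $I_M \leq 0$ by dilating a fixed density via $\rho_0^\sigma(x) := \sigma^N \rho_0(\sigma x)$ and letting $\sigma \to 0^+$ (both the entropy contribution, scaling like $\sigma^{N(m-1)}$, and the interaction term, controlled by the homogeneity with $\lambda > 0$, vanish in this limit). If $I_{M^\ast} = 0$ held for some $M^\ast > 0$, then \eqref{cc1} applied with $M = M^\ast/2$ would give the contradiction $0 = I_{M^\ast} < 2 I_{M^\ast/2} \leq 0$.

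The main obstacle is the scaling step: one must choose the exponent $b$ so that the two energy contributions rescale with a common power $\alpha > 1$. In the diffusion-dominated regime where the corollary will be applied, $\lambda < N(m-1)$ together with $\lambda < N$ produces a non-negative admissible $b$ and the required $\alpha > 1$; in parameter ranges where the balanced $b$ turns out to be negative, one instead combines the mass-only rescaling $\rho \mapsto t\rho$ with an independent spatial compression and argues by case analysis, but the underlying mechanism---concentrating mass makes the interaction strictly more negative per unit mass when $\lambda < N$---is common to all cases.
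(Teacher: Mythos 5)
The paper does not prove Proposition \ref{Lionsprop}; it simply cites Lions' Corollary II.1 and moves on, so there is no ``paper's proof'' to compare against. I therefore evaluate your argument on its own merits.

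Your overall architecture is sound: a scaling estimate $I_{tM}\le t^{\alpha}I_M$ with $\alpha>1$ does imply that $M\mapsto I_M/M$ is strictly decreasing once $I_M<0$, strict subadditivity follows exactly as you sum the two inequalities, and the converse via mass-preserving dilation plus a contradiction from subadditivity is fine. The computation of the two scaling exponents $m+bN(m-1)$ and $2+b\lambda$ is also correct, and for $m\le 2$ the balanced choice $b=(2-m)/(N(m-1)-\lambda)\ge 0$ is legitimate and $\alpha>1$ holds because $\lambda<N$.

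The gap is the case $m>2$, which the statement does not exclude and which the paper uses. In the diffusion-dominated range $N(m-1)>\lambda$, the balanced exponent $b=(2-m)/(N(m-1)-\lambda)$ is \emph{negative} whenever $m>2$, so your assertion that ``$\lambda<N(m-1)$ together with $\lambda<N$ produces a non-negative admissible $b$'' is false. With $b<0$ the scaling factor $t^{-b}$ in $W(z/t^{b})=W(t^{-b}z)$ exceeds $1$, and the hypothesis $W(tx)\ge t^{-\lambda}W(x)$ for $t\ge 1$ then yields a \emph{lower} bound $W(t^{-b}z)\ge t^{b\lambda}W(z)$, not the upper bound you need. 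Your proposed fallback does not close this: mass-only rescaling gives $E[t\rho]/t=t^{m-1}A+tB$ with $A=\tfrac{1}{m-1}\int\rho^m>0$ and $B=\tfrac{\chi}{2}\iint W\rho\rho<0$, and making this decrease at $t=1^{+}$ requires $(m-1)A<|B|$, which for $m>2$ is strictly stronger than the only available information $A<|B|$ and is not guaranteed by near-minimality; adding a spatial compression (factor $\ge 1$) replaces $t^2B$ by $t^2\sigma^{-N+\lambda}B$ with $\sigma^{-N+\lambda}\le 1$, which since $B<0$ only \emph{worsens} the bound. So for a general $W$ satisfying only the stated one-sided inequality, the argument is incomplete for $m>2$. (For the specific kernel $W_k=|x|^k/k$ of this paper, the homogeneity is an equality valid for \emph{all} $t>0$, so the balanced scaling with negative $b$ is legitimate and your proof does close — but that establishes the result only for exactly homogeneous kernels, not the general Proposition as stated.)
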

%%%%%%%%%%%%%%%%%%%%%%%%%%%%%%%%%%%%%%
%%%%%%%%%%%%%%%%%%%%%%%%%%%%%%%%%%%%%%
%%%%%%%%%%%%%%%%%%%%%%%%%%%%%%%%%%%%%%
\begin{proof}[Proof of Theorem \ref{thm:Emins}]
First of all, notice that our choice of potential $W_k(x)=|x|^{k}/k$ is indeed in $\mathcal{M}^p(\RR^N)$ with $p=-N/k$. Further, it can easily be verified
that Proposition \ref{Lionsprop} applies with $\lambda=-k$. Hence we are left to show that there exists a choice of $\rho \in \mY_{q,M}$ such that
$\mF[\rho]<0$. Let us fix $R>0$ and define
$$
\rho_*(x):=\frac{MN}{\sigma_N R^N}\,  \bC_{B_R}(x)\, ,
$$
where $B_R$ denotes the ball centered at zero and of radius $R>0$, and where $\sigma_N=2 \pi^{(N/2)}/\Gamma(N/2)$ denotes the surface area of the $N$-dimensional unit ball.
Then
\begin{align*}
 \mH_m[\rho_*]&=\frac{1}{m-1} \int_{\RR^N} \rho_*^m dx
 =\frac{(MN)^m \sigma_N^{1-m}}{N(m-1)} \, R^{N(1-m)}\, , \\
 \mW_k[\rho_*]
 &=\frac12\iint_{\RR^N\times \RR^N} W_k(x-y)\rho_*(x)\rho_*(y)\, dx dy\\
 &=  \frac{(MN)^2}{2k\sigma_N^2 R^{2N}}\iint_{\RR^N\times \RR^N}|x-y|^{k}\bC_{B_R}(x)\bC_{B_R}(y)\, dx dy \\
 & \leq \frac{(MN)^2}{2k\sigma_N^2 R^{2N}} (2R)^{k} \frac{\sigma_N^2}{N^2}R^{2N}
 = 2^{k-1}M^2 \frac{R^{k}}{k}<0\, .
\end{align*}
We conclude that
$$
\mF[\rho_*]=\mH_m[\rho_*]+\chi\mW_k[\rho_*] \leq
\frac{M^m N^{m-1} \sigma_N^{1-m}}{(m-1)} \, R^{N(1-m)} +2^{k-1}M^2\chi \frac{R^{k}}{k}\, .
$$
Since we are in the diffusion-dominated regime $N(1-m)< k<0$, we can choose $R>0$ large enough such that $\mF[\rho_*]<0$, and hence condition \eqref{cc2}
is satisfied. We conclude by Proposition \ref{Lionsprop} and Theorem \ref{Lionsthm} that there exists a minimiser $\bar \rho$ of $\mF$ in $\mY_{q,M}$ with
$q=(p+1)/p=(N-k)/N$.\\

It can easily be seen that in fact $\bar \rho \in L^m(\RR^N)$ using the HLS inequality \eqref{eq:HLS}:
 \begin{equation*}
  -\mW_k[\rho]=\frac12 \iint_{\RR^N \times \RR^N} \frac{|x-y|^{k}}{(-k)}\rho(x)\rho(y)\, dxdy
  \leq \frac{C_{HLS}}{(-2k)} ||\rho||_{r}^2\, ,
 \end{equation*}
 where $r=2N/(2N+k)=2p/(2p-1)$. Using H\"{o}lder's inequality, we find
 \begin{equation*}
 -\mW_k[\rho]\leq \frac{C_{HLS}}{(-2k)}
   ||\rho||_q^q  ||\rho||_1^{2-q}  \, .
 \end{equation*}
Hence, since $\mF[\bar \rho]<0$,
\begin{align*}
 ||\bar \rho||_m^m\leq -\chi (m-1) \mW_k[\bar \rho]
 \leq \chi (m-1)\left(\frac{M^{2-q}C_{HLS}}{(-2k)}\right) ||\bar\rho||_q^q  < \infty\, .
\end{align*}
Translating $\bar \rho$ so that its centre of mass is at zero and choosing $M=1$, we obtain a minimiser $\bar\rho$ of $\mF$ in $\mY$.
Moreover, by Riesz's rearrangement inequality \cite[Theorem 3.7]{LiebLoss}, we have
\[
\mW_k[\rho^{\#}]\leq \mW_k[\rho]\, , \qquad \forall \rho \in \mY,
\]
where $\rho^{\#}$ is the Schwarz decreasing rearrangement of $\rho$. Thus, if $\bar \rho$ is a global minimiser of $\mF$ in $\mY$, then so is $\bar\rho^{\#}$, and it follows that
\[
\mW_k[\bar \rho^{\#}]= \mW_k[\bar \rho]\, .
\]
We conclude from \cite[Theorem 3.7]{LiebLoss} that $\bar\rho=\bar\rho^{\#}$, and so all global minimisers of $\mF$ in $\mY$ are radially symmetric non-increasing.
\end{proof}

Global minimisers of $\mF$ satisfy a corresponding Euler--Lagrange
condition. The proof can be directly adapted from \cite[Theorem 3.1]{CCV} or \cite[Proposition 3.6]{CCH1}, and we omit it here.
\begin{proposition} \label{prop:characmin}
Let $k \in (-N,0)$ and $m>m_c$.
If $\rho$ is a global minimiser of the free energy functional $\mF$ in $\mY$, then $\rho$ is radially symmetric and non-increasing, satisfying
	  \begin{equation}\label{ELmin}
	   \rho^{m-1}(x) = \left(\frac{m-1}{m}\right)\,\left(D[\rho]-\chi W_k\ast \rho(x)\right)_+
	     \quad \text{a.e. in}\, \,  \RR^N.
	  \end{equation}
% 	  \begin{equation}\label{outside}
% 	    \frac{m}{N(m-1)}\,\rho^{m-1}(x) +2\chi W_k\ast \rho(x)\geq
% 	      D[\rho]\quad \text{a.e. in}\, \,  \RR^N.
% 	  \end{equation}
Here, we denote
$$
D[\rho] := 2 \mF[\rho] + \left(\frac{m-2}{m-1}\right) ||\rho||_m^m, \qquad \rho \in \mY\,.
$$
\end{proposition}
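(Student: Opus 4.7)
The radial symmetry and monotonicity of any global minimiser are already contained in Theorem~\ref{thm:Emins}, so the task reduces to deriving the Euler--Lagrange identity \eqref{ELmin}. The plan is to follow the classical variational scheme of \cite[Theorem~3.1]{CCV} and \cite[Proposition~3.6]{CCH1} in three steps: (a) interior mass-preserving variations will give a pointwise identity on $\supp\rho$; (b) one-sided variations will promote this to an inequality on the complement of $\supp\rho$; (c) pairing the identity with $\rho$ will identify the Lagrange multiplier with $D[\rho]$.

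For step (a), I would fix $\delta>0$, set $\Omega_\delta=\{\rho\ge\delta\}$ and take any $\phi\in L^\infty(\RR^N)$ with compact support in $\Omega_\delta$ and $\int_{\RR^N}\phi\,dx=0$. For $|\eps|$ sufficiently small, $\rho+\eps\phi$ is non-negative with unit mass, and translating back to zero centre of mass leaves $\mF$ unchanged. Since $m>m_c>1$, the map $\eps\mapsto\mH_m[\rho+\eps\phi]$ is differentiable at $\eps=0$ with derivative $\tfrac{m}{m-1}\int\rho^{m-1}\phi\,dx$, while \eqref{eq:HLSm} gives differentiability of $\eps\mapsto\mW_k[\rho+\eps\phi]$ with derivative $\chi\int\phi\,(W_k\ast\rho)\,dx$. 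Minimality will then force the existence of a constant $\lambda\in\RR$ such that
\[
\frac{m}{m-1}\rho(x)^{m-1}+\chi(W_k\ast\rho)(x)=\lambda\qquad\text{a.e. on }\supp\rho.
\]

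For step (b), I would use one-sided variations $\rho+\eps(\phi_+-\phi_-)$, with $\phi_+\ge 0$ bounded and compactly supported in the complement of $\supp\rho$, $\phi_-\ge 0$ bounded and supported in $\Omega_\delta$, and $\int\phi_+\,dx=\int\phi_-\,dx$. The first-order inequality at $\eps=0^+$ combined with step~(a) yields $\chi(W_k\ast\rho)(x)\ge\lambda$ for almost every $x\notin\supp\rho$; together with $\rho(x)=0$ there, this merges with step~(a) into the positive-part form of \eqref{ELmin}. For step~(c), multiplying the interior identity by $\rho$ and integrating over $\RR^N$ gives
\[
\lambda=\frac{m}{m-1}\|\rho\|_m^m+2\chi\,\mW_k[\rho],
\]
and a short algebraic rearrangement using $\mF[\rho]=\tfrac{1}{m-1}\|\rho\|_m^m+\chi\mW_k[\rho]$ shows this equals $D[\rho]$.

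The delicate point will be step (b) in the singular range $-N<k\le 1-N$, where $W_k\ast\rho$ is defined through a Cauchy principal value and is a priori neither bounded nor continuous. The pointwise inequality $\chi(W_k\ast\rho)\ge\lambda$ almost everywhere on $\{\rho=0\}$ has then to be read at Lebesgue points, and justifying the differentiability of $\mW_k$ along $\phi_+$ requires the HLS-type bound \eqref{eq:HLSm} together with the Riesz potential estimates of Proposition~\ref{prop:rieszestimate} to control all cross-terms appearing in the expansion.
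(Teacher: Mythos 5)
Your proposal is correct and follows essentially the same variational scheme as \cite[Theorem~3.1]{CCV} and \cite[Proposition~3.6]{CCH1}, which are precisely the references the paper points to while omitting the proof. One cosmetic slip: the derivative of $\eps\mapsto\mW_k[\rho+\eps\phi]$ at $\eps=0$ is $\int_{\RR^N}(W_k\ast\rho)\phi\,dx$ without the factor $\chi$; that factor enters only when assembling $\mF=\mH_m+\chi\mW_k$, and your subsequent displayed identity does incorporate it correctly.
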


%%%%%%%%%%%%%%%%%%%%%%%%%%%%%%%%%%%%%%%%%%%%%%%%%%%%%%%%%%
%%%%%%%%%%%%%%%%%%%%%%%%%%%%%%%%%%%%%%%%%%%%%%%%%%%%%%%%%%
%%%%%%%%%%%%%%%%%%%%%%%%%%%%%%%%%%%%%%%%%%%%%%%%%%%%%%%%%%
%%%%%%%%%%%%%%%%%%%%%%%%%%%%%%%%%%%%%%%%%%%%%%%%%%%%%%%%%%
\subsection{Boundedness of Global Minimisers}
\label{sec:bddmins}

This section is devoted to showing that all global minimisers of $\mF$ in $\mY$ are uniformly bounded. In the following, for a radial function $\rho\in L^1(\mathbb{R}^N)$ we denote by $M_\rho(R):=\int_{B_R}\rho\, dx$  the corresponding mass function, where $B_R$ is a ball of radius $R$, centered at the origin.
We start with the following technical lemma:

%\begin{remark}\rm The previous lemma implies the well known $O(|x|^{2s-d})$ behaviour at infinity of the Riesz potential for radial compactly supported
%functions.
%\end{remark}

\begin{lemma}\label{Newtonlemma}
Let $\chi>0$, $-N<k<0$, $m>1$ and $0\le q< m/N$. Assume $\rho\in \mY$ is radially decreasing.  For a fixed $H>0$,
the level set  $\{\rho\ge H\}$ is a ball centered at the origin whose radius we denote by $A_H$.
Then we have the following cross-range interaction estimate: there exists $H_0>1$, depending only on $q,N,m,\|\rho\|_m$, such that, for any $H>H_0$,
\[
\int_{B_{A_H}^C}\int_{B_{A_H}}|x-y|^{k} \rho(x)\rho(y)\,dx\,dy\le C_{k,N}\, M_{\rho}({A_H})\, \mathcal{K}_{k,q,N}(H),
\]
where
\[
\mathcal{K}_{k,q,N}(H):=\left\{\begin{array}{ll}
H^{1-q(k+N)}+ H^{-kq} \quad&\mbox{if $k\in(-N,0),\; k\neq 1-N$},\\
H^{1-q}(2+\log (1+H^q))+  H^{q(N-1)} \quad&\mbox{if $k=1-N$}
\end{array}\right.
\]
and $C_{k,N}$ is a constant depending only on $k$ and $N$.

\end{lemma}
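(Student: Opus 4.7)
The plan is to bound the inner integral
\[
V(y) := \int_{B_{A_H}^C} |x-y|^k \rho(x)\,dx
\]
uniformly in $y \in B_{A_H}$, and then integrate the resulting bound against $\rho(y)\,dy$ over $B_{A_H}$ in order to produce the factor $M_\rho(A_H)$ on the right-hand side automatically.

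To estimate $V(y)$, the natural strategy is a near-far split of $B_{A_H}^C$ with respect to a threshold $\tau>0$: on the near piece $\{x \in B_{A_H}^C:|x-y|<\tau\}$ the kernel is singular but $\rho$ is bounded, while on the far piece $\{|x-y|\ge\tau\}$ the kernel is bounded by $\tau^k$ (recall $k<0$). For the near piece, radial monotonicity of $\rho$ gives $\rho\le\rho(A_H)=H$ on $B_{A_H}^C$, and a direct polar integration on $B_\tau(y)$ yields a near contribution at most $\tfrac{\sigma_N}{k+N}H\tau^{k+N}$. For the far piece, the mass bound $\|\rho\|_1\le 1$ gives a far contribution at most $\tau^k$. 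Choosing $\tau = H^{-q}$ then produces
\[
V(y) \le \tfrac{\sigma_N}{k+N}\,H^{1-q(k+N)} + H^{-kq},
\]
and integrating $\rho(y)$ over $B_{A_H}$ yields the desired estimate with $\mathcal{K}_{k,q,N}(H) = H^{1-q(k+N)} + H^{-kq}$ in the case $k\neq 1-N$.

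For the borderline exponent $k=1-N$ the same split still works since $k+N=1$ remains positive, and one gets $V(y)\le\sigma_N H^{1-q} + H^{(N-1)q}$, which already implies the claimed bound (the factor $(2+\log(1+H^q))$ provides extra slack). Should one prefer to let the logarithm appear intrinsically, a refinement of the far piece into dyadic annuli combined with the radially-decreasing $L^m$ pointwise bound $\rho(r) \le c_{N,m}\|\rho\|_m r^{-N/m}$ produces a geometric sum whose summation becomes logarithmic in $H^q$ exactly at the borderline exponent $k+N=1$, yielding the stated $(2+\log(1+H^q))$ factor.

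Finally, the constraint $0\le q<m/N$ and the existence of the threshold $H_0>1$ depending on $q,N,m,\|\rho\|_m$ come from the $L^m$-bound on $\rho$: since $\rho\ge H$ on $B_{A_H}$ one has $\|\rho\|_m^m \ge H^m\sigma_N A_H^N/N$, whence $A_H \le C_{N,m}\|\rho\|_m^{m/N}\,H^{-m/N}$. Selecting $H_0$ so that $H^{-q}\ge 2 A_H$ for all $H>H_0$ makes the near-far decomposition geometrically meaningful and lets the near contribution genuinely absorb the singular part of the kernel. The main obstacle I anticipate is not the splitting itself but the careful bookkeeping of constants --- in particular verifying that the final constant $C_{k,N}$ indeed depends only on $k$ and $N$, with all $m$- and $\|\rho\|_m$-dependence absorbed into the smallness threshold $H_0$ as stated.
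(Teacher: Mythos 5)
Your proposal is correct, and it takes a genuinely different (and more elementary) route than the paper. The paper applies the radial Riesz potential asymptotics of Proposition \ref{prop:rieszestimate} to the normalised truncation $\rho\,\bC_{B_{A_H}}/M_\rho(A_H)$, obtaining a pointwise bound of the form $C\,M_\rho(A_H)\,T_k(|x|,A_H)\,|x|^k$ for the potential generated by the \emph{inner} ball at points $x$ \emph{outside} it, and then splits the outer integration at radius $H^{-q}$ about the origin; the singular range $-N<k\le 1-N$ then requires the extra factor $T_k$, which is where the condition $H^{-q}\ge 2A_H$ (hence $H_0$) is genuinely used to get $T_k\le 3$ on the far region, and where the logarithm at $k=1-N$ originates. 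You instead bound the potential generated by the \emph{outer} region at a point $y$ of the inner ball, splitting at $|x-y|\lessgtr H^{-q}$ \emph{centred at $y$}: the near piece uses only $\rho\le H$ on $B_{A_H}^C$ and the local integrability of $|x-y|^k$ (valid since $k+N>0$), the far piece uses only $\|\rho\|_1=1$, and integrating the resulting uniform bound against $\rho(y)\,dy$ produces $M_\rho(A_H)$. This treats all $k\in(-N,0)$ uniformly, yields a constant $\sigma_N/(k+N)+1$ depending only on $k$ and $N$, makes the logarithmic factor at $k=1-N$ pure slack, and in fact does not need the threshold $H_0$ at all (so you prove slightly more than is claimed). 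What the paper's heavier route buys is the Riesz potential machinery itself, which is reused elsewhere (e.g.\ for the compact support of stationary states); for this lemma in isolation your argument is simpler and complete.
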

\begin{proof}
Notice that the result is trivial if $\rho$ is bounded. The interesting case here is  $\rho$  unbounded, implying that $A_H>0$ for any $H>0$.

First of all, since $\rho \in L^m(\RR^N)$ and $\rho\ge H$ on $B_{A_H}$, the estimate
\[
\frac{\sigma_N A_H^N}{N} H^m=\int_{B_{A_H}} H^m\le \int_{B_{A_H}}\rho^m\le ||\rho||_m^m
\]
implies that $H^q A_H$ is vanishing as $H\to+\infty$ as soon as $q<m/N$, and in particular that we can find $H_0>1$, depending only on $q,m,N, ||\rho||_m$,
such that
\[
H^{-q}\ge 2 A_H\quad \mbox{for any $H>H_0$}.
\]
We fix $q\in [0,m/N)$ and $H>H_0$ as above from here on.\\

Let us make use of Proposition \ref{prop:rieszestimate}, which we apply to the compactly supported function $\rho_H:=\rho\bC_{\{\rho\ge H\}}/M_\rho\left(A_H\right)$.\\

\fbox{Case $1-N<k<0$} Proposition \ref{prop:rieszestimate}(i) applied to $\rho_H$ gives the estimate
\begin{equation*}%\label{start}
\int_{B_{A_H}}|x-y|^{k}\rho(y)\,dy\le C_1 M_\rho\left(A_H\right)|x|^{k} \, , \qquad \forall x\in \RR^N\, ,
\end{equation*}
and hence, integrating against $\rho$ on $B_{A_H}^C$ and using $\rho \leq H$ on $B^C_{A_H}$,
\begin{align*}
&\int_{B_{A_H}^C}\int_{B_{A_H}}|x-y|^{k} \rho(x)\rho(y)\,dx\,dy
\le C_1 M_\rho\left(A_H\right)\int_{B_{A_H}^C}|x|^{k} \rho(x)\,dx\notag\\
%%%%%%%%%%%%%%%%%%%%%%%
&\quad= C_1 M_\rho\left(A_H\right)
\left(
\int_{B_{A_H}^C\cap B_{H^{-q}}}|x|^{k} \rho(x)\,dx
+\int_{B_{A_H}^C\setminus {B_{H^{-q}}}} |x|^{k} \rho(x)\,dx
\right)\notag\\
%%%%%%%%%%%%%%%%%%%%%%%%
&\quad\le C_1 M_\rho\left(A_H\right)
\left(
H \int_{B_{A_H}^C\cap B_{H^{-q}}}|x|^k\, dx
+H^{-kq}\int_{B_{A_H}^C\setminus {B_{H^{-q}}}} \rho(x)\,dx
\right)\notag\\
%%%%%%%%%%%%%%%%%%%%%%%%%
&\quad\le C_1 M_\rho\left(A_H\right)
\left(
H \sigma_N \int_{A_H}^{H^{-q}}r^{k+N-1}\,dr
+H^{-kq}
\right)\notag\\
%%%%%%%%%%%%%%%%%%%%%%%%%
&\quad\le C_1 M_\rho\left(A_H\right)
\left(
\frac{\sigma_N}{k+N}H^{1-q(k+N)}
+H^{-kq}
\right)\, ,\notag
\end{align*}
which conludes the proof in that case. \\

\fbox{Case $-N<k\leq 1-N$} In this case, we obtain from Proposition \ref{prop:rieszestimate}(ii) applied to $\rho_H$ the estimate
\begin{equation*}%\label{start}
\int_{B_{A_H}}|x-y|^{k}\rho(y)\,dy\le C_2 M_\rho\left(A_H\right)T_k(|x|,A_H)|x|^{k} \, , \qquad \forall x\in B_{A_H}^C\, ,
\end{equation*}
and integrating against $\rho(x)$ over $B_{A_H}^C$, we have
\begin{equation}\label{start}
\int_{B_{A_H}^C}\int_{B_{A_H}}|x-y|^{k} \rho(x)\rho(y)\,dx\,dy\le C_2 M_\rho\left(A_H\right)\int_{B_{A_H}^C}T_k(|x|,A_H)|x|^{k} \rho(x)\,dx\, .
\end{equation}
We split the integral in the right hand side as $I_1+I_2$, where
\[
I_1:=\int_{B_{A_H}^C\cap B_{H^{-q}}}T_k(|x|,A_H)|x|^{k} \rho(x)\,dx,\quad
I_2:=\int_{B_{A_H}^C\setminus {B_{H^{-q}}}}T_k(|x|,A_H)|x|^{k} \rho(x)\,dx\, .
\]
Let us first consider $I_2$, where we have  $|x|\ge H^{-q}\ge 2A_H$ on the integration domain. Since the map $|x|\mapsto \frac{|x|+A_H}{|x|-A_H}$ is
monotonically decreasing to $1$ in $(A_H,+\infty)$,  it is bounded above by $3$ on $(2A_H,+\infty)$. We conclude from \eqref{Pik} that
$T_k(|x|,A_H)\le3$ for $|x|\in (H^{-q},+\infty)$. This entails
\begin{equation}\label{II-estimate}
I_2\le 3\int_{B_{A_H}^C\setminus {B_{H^{-q}}}}|x|^{k}\rho(x)\,dx\le 3\, H^{-kq},
\end{equation}
where we used once again $|x|\ge H^{-q}$, recalling that $k<0$.

Concerning $I_1$, we have $\rho\le H$ on $B_{A_H}^C$ which entails
\begin{equation}\label{I-estimate}
I_1\le H \int_{B_{A_H}^C\cap B_{H^{-q}}}T_k(|x|,A_H)|x|^{k} \,dx=\sigma_N
%d\omega_{d} %-- check how to translate \omega_d
H\int_{A_H}^{H^{-q}}T_k(r,A_H) r^{k+N-1}\,dr.
\end{equation}
If $-N<k<1-N$, we use \eqref{Pik} and $(r+2A_H)/(r+A_H)< 2$ for $r\in (0,+\infty)$, so that
\begin{equation}\label{s2}
\int_{A_H}^{H^{-q}}T_k(r,A_H) r^{k+N-1}\,dr\le\int_{0}^{H^{-q}}\left(\frac{r+2A_H}{r+A_H}\right)^{1-k-N}
\,r^{k+N-1}\,dr\le\frac{2^{1-k-N}}{k+N}\,H^{-q(k+N)}.
\end{equation}
If $k=1-N$ we have from \eqref{Pik}, since $2A_H\le H^{-q}<1$,
\begin{equation}\label{s3}\begin{aligned}
\int_{A_H}^{H^{-q}}T_k(r,A_H) r^{k+N-1}\,dr &=\int_{A_H}^{H^{-q}} \left(1+\log\left(\frac{r+A_H}{r-A_H}\right)\right)\,dr\\&\le
\int_{0}^{H^{-q}}\left(1+\log\left(\frac{r+1}{r}\right)\right)\,dr\\&=H^{-q}+H^{-q}\log(1+H^q)+\log(1+H^{-q})\\&\le H^{-q}(2+\log(1+H^q)).
\end{aligned}\end{equation}
Combining \eqref{I-estimate}, \eqref{s2}, \eqref{s3} we conclude $I_1\le
\tfrac{\sigma_N2^{1-k+N}}{k+N}\, H^{1-q(k+N)}$ if $-N<k<1-N$, and
$I_1\le \sigma_N H^{1-q}(2+\log(1+H^q))$ if $k=1-N$. These information together with the estimate \eqref{II-estimate} can be inserted into \eqref{start} to conclude.
\end{proof}
%%%%%%%%%%%%%%%%%%%%%%%%%%%%%%%%%%%%%%%%%%%%%%%%%%%%%%%%%%%%%
%%%%%%%%%%%%%%%%%%%%%%%%%%%%%%%%%%%%%%%%%%%%%%%%%%%%%%%%%%%%%

We are now in a position to prove that any minimiser of $\mF$ is bounded.

\begin{theorem}\label{prop:Linfty}
Let $\chi>0$, $k \in (-N,0)$ and $m >m_c$. Then any global minimiser of $\mF$ over $\mY$ is uniformly bounded and compactly supported.
\end{theorem}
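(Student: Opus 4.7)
The plan is a proof by contradiction based on the Euler--Lagrange identity \eqref{ELmin} from Proposition \ref{prop:characmin} together with the cross-interaction estimate of Lemma \ref{Newtonlemma}. Let $\rho$ be a global minimiser; by Theorem \ref{thm:Emins}, $\rho$ is radially non-increasing, so the super-level set $\{\rho\ge H\}$ coincides with a ball $B_{A_H}$ centred at the origin for each $H>0$. A Chebyshev-type estimate using $\rho\in L^m$ yields $A_H\le C H^{-m/N}$, hence $A_H\to 0$ as $H\to\infty$. Suppose, for contradiction, that $\|\rho\|_\infty=\infty$, so that $A_H>0$ for every $H$.

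Multiplying \eqref{ELmin} by $\rho$ and integrating on $B_{A_H}\subset\supp\rho$ (where the positive part in \eqref{ELmin} is superfluous), the bound $\rho\ge H$ on $B_{A_H}$ gives
\begin{equation*}
H^{m-1}M_\rho(A_H)\le\int_{B_{A_H}}\rho^m\,dx=\frac{m-1}{m}\bigg(D[\rho]\,M_\rho(A_H)+\frac{\chi}{|k|}(K_H+J_H)\bigg),
\end{equation*}
where $K_H$ is the self-interaction over $B_{A_H}\times B_{A_H}$ and $J_H$ is the cross-interaction over $B_{A_H}\times B_{A_H}^C$. Lemma \ref{Newtonlemma} directly controls $J_H\le C M_\rho(A_H)\,\mathcal{K}_{k,q,N}(H)$ for any $q\in[0,m/N)$ and $H$ large. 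For $K_H$, I apply Proposition \ref{prop:rieszestimate}(i) to the radial probability density $\rho\bC_{B_{A_H}}/M_\rho(A_H)$, obtaining $|x|^k\ast(\rho\bC_{B_{A_H}})(x)\le C M_\rho(A_H)|x|^k$ in the range $1-N<k<0$; combining this with a H\"older estimate using $\rho\in L^m$ and the decay $A_H\le C H^{-m/N}$ gives $K_H\le C M_\rho(A_H) H^{-(m-1)+m|k|/N}$. In the singular range $-N<k\le 1-N$ and whenever the H\"older step fails (namely when $m\le N/(N+k)$), Proposition \ref{prop:rieszestimate}(i) is unavailable and $K_H$ is instead controlled by the HLS inequality \eqref{eq:HLS} applied to $\rho\bC_{B_{A_H}}$ combined with interpolation between $L^1$ and $L^m$; here $m>m_c$ is used to place the HLS exponent $2N/(2N+k)$ strictly below $m$.

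After absorbing the $D[\rho]$-term into a fraction of the left-hand side for $H$ large and dividing by $M_\rho(A_H)>0$, one arrives at
\begin{equation*}
H^{m-1}\le C_1+C_2\,\mathcal{K}_{k,q,N}(H)+C_3\,H^{-(m-1)+m|k|/N}.
\end{equation*}
The free parameter $q\in[0,m/N)$ is chosen so that $-kq<m-1$ and, when $m<2$, also $q(k+N)>2-m$; these constraints together with $q<m/N$ are mutually compatible precisely because $m>m_c=1-k/N$. Likewise $m>m_c$ implies $m|k|/N<2(m-1)$, so $H^{-(m-1)+m|k|/N}=o(H^{m-1})$. Thus the right-hand side is $o(H^{m-1})$, contradicting the inequality as $H\to\infty$, whence $\|\rho\|_\infty<\infty$.

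Compactness of the support is then obtained along the lines of Corollary \ref{cor:compactsupp}: if $\rho$ were positive on the whole of $\RR^N$, the vanishing at infinity of both $\rho$ and $W_k\ast\rho$ (Corollary \ref{cor:simpledecay}) would force $D[\rho]=0$ in \eqref{ELmin}, giving $\rho\sim(W_k\ast\rho)^{1/(m-1)}$ with decay no faster than $|x|^{k/(m-1)}$, incompatible with $\rho\in L^1(\RR^N)$ when $m>m_c$. The main technical obstacle is the $K_H$-estimate in the most singular range $-N<k\le 1-N$, where the clean pointwise Riesz bound is unavailable and must be replaced by a delicate HLS+interpolation argument; moreover, the parameter $q$ in Lemma \ref{Newtonlemma} has to balance two competing constraints, and it is precisely the diffusion-dominated assumption $m>m_c$ that makes the admissible window $[0,m/N)$ wide enough to accommodate both.
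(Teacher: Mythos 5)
Your proposal is correct in its outline but takes a genuinely different route from the paper, and the comparison is instructive. The paper does \emph{not} invoke the Euler--Lagrange identity \eqref{ELmin}; instead it constructs an explicit competitor $\tilde\rho$ by removing the mass of $\rho$ on $B_{A_H}$ and depositing it as a flat profile on a distant ball $D_r$, then shows $\mF[\rho]>\mF[\tilde\rho]$ via the splitting \eqref{Wdiff} into $I_1$ (self-interaction on $B_{A_H}\times B_{A_H}$), $I_2$ (cross term, handled by Lemma \ref{Newtonlemma}) and $I_3(r)$ (which vanishes as $r\to\infty$ by HLS). Your approach short-circuits this: you multiply the necessary condition \eqref{ELmin} (available from Proposition \ref{prop:characmin}) by $\rho$, integrate over $B_{A_H}$, and drive $H\to\infty$. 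This is shorter, and both routes rely on Lemma \ref{Newtonlemma} for the cross term in exactly the same way, including the constraints \eqref{n1}--\eqref{n2} needed to place $q$ in the admissible window. The competitor approach has the virtue of being self-contained at the level of the functional $\mF$ and not requiring the EL identity for a possibly unbounded density; your approach presupposes Proposition \ref{prop:characmin} in its full strength, which the paper treats as a prior, separately established fact, so there is no circularity, but it is worth being explicit about that dependence.

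One technical remark on your handling of the self-interaction $K_H$. The paper bounds it uniformly in $k$ by the HLS-type inequality \eqref{eq:HLSm}, yielding
$K_H\le C_* M_\rho(A_H)^{1+k/N}\int_{B_{A_H}}\rho^m$, which is then absorbed into the left-hand side because $M_\rho(A_H)^{1+k/N}\to 0$. Your primary route (Proposition \ref{prop:rieszestimate}(i) plus H\"older, giving $K_H\le C M_\rho(A_H)H^{m|k|/N-(m-1)}$) is valid, but as you note it requires both $1-N<k<0$ and $m>N/(N+k)$, and since $m_c<N/(N+k)$ there is a genuine sub-range where it fails. Your stated fallback is ``HLS + interpolation between $L^1$ and $L^m$''; as written, interpolating at the HLS exponent $r=2N/(2N+k)$ and then bounding $\|\rho\|_{L^m(B_{A_H})}$ by a constant gives $K_H\le C M_\rho(A_H)^{2(1-\theta)}$, and the gain $M_\rho(A_H)^{1-2\theta}\to 0$ after dividing by $M_\rho(A_H)$ again needs $\theta<1/2$, which is once more equivalent to $m>N/(N+k)$. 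To cover the full range $m>m_c$ you must keep the factor $\int_{B_{A_H}}\rho^m$ explicit, as in \eqref{eq:HLSm}, and absorb it into the left-hand side $H^{m-1}M_\rho(A_H)\le\int_{B_{A_H}}\rho^m$ before dividing by $M_\rho(A_H)$; this is precisely the paper's estimate \eqref{I1estimate}. Your final displayed inequality, which has no $\int_{B_{A_H}}\rho^m$ on the right, does not quite reflect this fallback, so you should either adopt \eqref{eq:HLSm} uniformly for $K_H$ (simpler, and what the paper does) or restate the concluding inequality to include the absorbed term. The compactness of support at the end is fine and matches Corollary \ref{cor:compactsupp}.
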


\begin{proof}
Since $\rho$ is radially symmetric decreasing by Proposition \ref{prop:characmin}, it is enough to show $\rho(0)<\infty$.
Let us reason by contradiction and assume that $\rho$ is unbounded at the origin.
We will  show that $\mF[\rho]-\mF[\tilde \rho]>0$ for a suitably chosen competitor $\tilde \rho$,
\begin{equation*}%\label{competitor}
 \tilde \rho(x)=\tilde\rho_{H,r}(x): = \frac{N M_\rho({A_H})}{\sigma_N r^N} \bC_{D_r}(x) + \rho(x) \bC_{B_{A_H}^C}(x)\, ,
\end{equation*}
where $B_{A_H}$ and $q$ are defined as in Lemma \ref{Newtonlemma}, $B_{A_H}^C$ denotes the complement of $B_{A_H}$ and $\bC_{D_r}$ is the characteristic function of a
ball $D_r:=B_r(x_0)$ of radius $r>0$, centered at some $x_0\neq 0$ and such that $D_r\cap B_{A_H}=\emptyset$. Note that $A_H\leq H^{-q}/2< H_0^{-q}/2< 1/2$. Hence, we
can take $r>1$ and $D_r$ centered at the point $x_0=(2r,0, \dots, 0) \in \RR^N$.
 Notice in particular that since $\rho$ is unbounded, for any $H>0$ we have that $B_{A_H}$ has non-empty interior. On the other hand, $B_{A_H}$ shrinks to the origin as $H\to\infty$ since $\rho$ is integrable.

 As $D_r\subset B_{A_H}^C$ and $\rho=\tilde\rho$ on $B_{A_H}^C\setminus D_r$, we obtain
\begin{equation*}\begin{aligned}
 (m-1)\left(\mH_m[\rho]-\mH_m[\tilde \rho]\right)
 &=\int_{B_{A_H}}\rho^m\, dx + \int_{B_{A_H}^C} \rho^m\, dx - \int_{B_{A_H}^C} \left(\rho+\frac{N M_\rho({A_H})}{\sigma_N r^N}\bC_{D_r}\right)^m\, dx\\
 &=\int_{B_{A_H}}\rho^m\, dx + \int_{D_r} \left[\rho^m - \left(\rho+\frac{N M_\rho({A_H})}{\sigma_Nr^N}\right)^m\right]\, dx\,.%\int_{A_H}\rho^m + \eps_r\, ,
\end{aligned}\end{equation*}
We bound
\[\begin{aligned}
 \eps_r:
 &=\int_{D_r} \left[\rho^m - \left(\rho+\frac{N M_\rho({A_H})}{\sigma_Nr^N}\right)^m\right]\, dx
 \le
 M_\rho(A_H)^m\left(\frac{\sigma_N}{N}\right)^{1-m} r^{N(1-m)}\, ,
 \end{aligned}
\]
where we use the convexity identity $(a+b)^m \geq \left|a^m-b^m\right|$ for $a,b >0$. Hence, $\eps_r$ goes to $0$ as $r\to\infty$.
Summarising we have for any $r>1$,
\begin{equation}\label{Hest}
(m-1)\left(\mH_m[\rho]-\mH_m[\tilde \rho]\right)
 =\int_{B_{A_H}}\rho^m\, dx+\eps_r,
\end{equation}
with $\eps_r$ vanishing as $r\to\infty$.

To estimate the interaction term, we split the double integral into three parts:
\begin{equation}\label{Wdiff}\begin{aligned}
 2k\left(\mW_k[\rho]-\mW_k[\tilde \rho]\right)
 = &\iint_{\RR^N \times \RR^N} |x-y|^{k}\left(\rho(x)\rho(y)-\tilde\rho(x)\tilde\rho(y)\right)\,dxdy\\
 =&\iint_{B_{A_H} \times B_{A_H}} |x-y|^{k}\rho(x)\rho(y)\,dxdy\\
 &+2\iint_{B_{A_H} \times B_{A_H}^C} |x-y|^{k}\rho(x)\rho(y)\,dxdy\\
 &+ \iint_{B_{A_H}^C \times B_{A_H}^C} |x-y|^{k}\left(\rho(x)\rho(y)
- \tilde\rho(x)\tilde\rho(y)\right)\,dxdy
 \\
 =:&\,I_1+I_2+I_3(r)\, .
\end{aligned}\end{equation}

 Let us start with $I_3$. By noticing once again that  $\rho=\tilde\rho$ on $B_{A_H}^C\setminus D_r$ for any $r>0$, we have
\begin{align*}
 I_3(r)=&\int\int_{D_r\times D_r}|x-y|^{k}\left(\rho(x)\rho(y)-\tilde\rho(x)\tilde\rho(y)\right)\, dxdy\\
 &+2\int\int_{D_r\times (B_{A_H}^C\setminus D_r)}|x-y|^{k}\left(\rho(x)\rho(y)-\tilde\rho(x)\tilde\rho(y)\right)\, dxdy\\
 =:&\, I_{31}(r)+I_{32}(r)\, .
\end{align*}
Since $\tilde\rho=\rho+\frac{N M_\rho({A_H})}{\sigma_N r^N}$ on $D_r$, we have
\[
I_{32}(r)=-2\frac{N M_\rho({A_H})}{\sigma_Nr^N}\int\int_{D_r\times (B_{A_H}^C\setminus D_r)}|x-y|^{k}\rho(y)\,dxdy\,.
\]
By the HLS inequality \eqref{eq:HLS}, we have
\begin{align*}
 |I_{32}(r)|\le &2\frac{N M_\rho({A_H})}{\sigma_Nr^N}\int\int_{D_r\times \RR^N}|x-y|^{k}\rho(y)\,dxdy\\
\leq
&2C_{HLS}\frac{N M_\rho({A_H})}{\sigma_Nr^N}
\|\bC_{D_r}\|_a\|\rho\|_{b}
\end{align*}
if $a>1, b>1$ and $1/a+1/b-k/N=2$.  We can choose $b\in \left(1,\min\left\{m\, , \,N/(k+N)\right\}\right)$, which is possible as $-N<k<0, m>1$, and then we
get $a>1$, $\rho\in L^b(\mathbb{R}^N)$ as $1<b<m$, and
\[
|I_{32}(r)|\le2C_{HLS}||\rho||_b M_\rho({A_H})\,\left(\frac{\sigma_N r^N}{N}\right)^{\frac1a-1}\, .
\]
The latter vanishes as $r\to\infty$. For the term $I_{31}$, we have
\begin{align*}
 I_{31}(r)=
 &-2\frac{NM_\rho({A_H})}{\sigma_Nr^N}\int\int_{D_r\times D_r}|x-y|^{k}\rho(y)\,dxdy\\
&-\left(\frac{NM_\rho({A_H})}{\sigma_Nr^N}\right)^2\int\int_{D_r\times D_r}|x-y|^{k}\,dxdy\, .
\end{align*}
With the same choice of $a,b$ as above, the HLS inequality implies
\begin{align*}
 |I_{31}(r)|\leq
&2\frac{N M_\rho({A_H})}{\sigma_Nr^N}\int\int_{D_r\times \RR^N}|x-y|^{k}\rho(y)\,dxdy\\
&+
\left(\frac{NM_\rho({A_H})}{\sigma_Nr^N}\right)^2\int\int_{D_r\times D_r}|x-y|^{k}\,dxdy\\
\le &C_{HLS}M_\rho({A_H})\,\left(2||\rho||_b\left(\frac{\sigma_N r^N}{N}\right)^{\frac1a-1}+ M_\rho({A_H}) \left(\frac{\sigma_N
r^N}{N}\right)^{\frac1a+\frac1b-2}\right)\, ,
\end{align*}
which vanishes as $r\to\infty$ since $a>1$ and $b>1$.
We conclude that $I_3(r)\to 0$ as $r\to\infty$.

The  integral $I_1$ can be estimated using Theorem \ref{thm:HLSm}, and the fact that $\rho\ge H>1$ on $B_{A_H}$ together with $m>m_c$,
\begin{align}
 I_1= \iint_{B_{A_H} \times B_{A_H}} |x-y|^{k}\rho(x)\rho(y)\,dxdy
 &\leq C_* M_\rho(A_H)^{1+k/N} \int_{B_{A_H}}\rho^{m_c}(x)\, dx\notag\\
  &\leq C_* M_\rho(A_H)^{1+k/N} \int_{B_{A_H}}\rho^{m}(x)\, dx\, .\label{I1estimate}
\end{align}

On the other hand, the HLS inequalities \eqref{eq:HLS} and \eqref{eq:HLSm} do not seem to give a sharp enough estimate for the cross-term $I_2$, for which we instead invoke Lemma
\ref{Newtonlemma}, yielding
\begin{equation}\label{I2estimate}
I_2\le 2C_{k,N}\, M_{\rho}(A_H)\, \mathcal{K}_{k,q,N}(H),
\end{equation}
for given  $q\in [0,m/N)$ and large enough $H$ as specified in Lemma \ref{Newtonlemma}.

In order to conclude, we join together \eqref{Hest}, \eqref{Wdiff}, \eqref{I1estimate}  and \eqref{I2estimate} to obtain for any $r>1$ and  any large enough $H$,
\begin{align}
 \mF[\rho]-\mF[\tilde \rho]&= \mH_m[\rho]-\mH_m[\tilde\rho]+\chi\left(\mW_k[\rho]-\mW_k[\tilde\rho]\right) \notag\\
 &\ge \left(\frac{1}{m-1} +\chi\frac{C_*}{2k}  M_\rho({A_H})^{1+k/N}\right) \,\int_{B_{A_H}}\rho^{m}\, dx
 +\chi\,\frac{C_{k,N}}{k}\, M_{\rho}({A_H})\, \mathcal{K}_{s,q,N}(H)\notag\\
 &\quad
 +\frac{\eps_r}{m-1}
 +\frac{\chi}{2k} I_3(r)\,.\label{comparison}
\end{align}
% Now notice that, since $M_\rho(A_H)$ vanishes as $H\to\infty$, there holds
% \[
% C_*M_\rho({A_H})^{1+k/N} \int_{B_{A_H}}\rho^m < \frac{(-k)}{3\chi N(m-1)}\int_{B_{A_H}}\rho^{m}
% \]
% for any large enough $H$.

Now we choose $q$. On the one hand, notice that for a choice $\eta>0$ small enough such that $m>m_c+\eta$, we have
\begin{equation}\label{n1}
 \frac{2-m+\eta}{k+N}<\frac{m-1-\eta}{(-k)}\, .
\end{equation}
On the other
hand, $-N<k<0$ implies $1-k/N>2N/(2N+k)$. Since $m>m_c$, this gives the inequality $m>2N/(2N+k)$. Hence, for small enough $\eta>0$ such that $m>N(2+\eta)/(2N+k)$, we have
\begin{equation}\label{n2}
\frac{2-m+\eta}{k+N}<\frac mN.
\end{equation}
Thanks to \eqref{n1} and \eqref{n2} we see that
 we can fix a non-negative $q$ such that
\begin{equation}\label{theq}
 \frac{2-m+\eta}{k+N}<q<\min\left\{\frac mN\, ,\, \frac{(m-1-\eta)}{(-k)}\right\}.
\end{equation}
Since $q$ satisfies \eqref{theq}, it follows that $-kq<m-1-\eta$ and at the same time $1-q(k+N)<m-1-\eta$, showing that $\mathcal{K}_{k,q,N}(H) $
from Lemma \ref{Newtonlemma}  grows slower than $H^{m-1-\eta}$ as $H\to \infty$ for $k \neq 1-N$. If $k=1-N$, we have that for any $C>0$
there exists $H>H_0$ large enough such that $CH^{1-q}\log(1+H^q)<H^{m-1-\eta}$ since $q>2-m+\eta$, and so the same result follows. Hence,  for any large enough $H$ we have
% $C_{k,N}\,\mathcal{K}_{k,q,N}(H)<\tfrac{(-k)}{6\chi N(m-1)} H^{m-1}$ \comment{[check]} and
\[
C_{k,N} \,M_{\rho}({A_H})\, \mathcal{K}_{k,q,N}(H)< C_{k,N} H^{m-1-\eta}\, M_\rho({A_H})\le C_{k,N} H^{-\eta}\int_{B_{A_H}}\rho^{m}\, dx
\]
since $\rho\ge H$ on $B_{A_H}$. Inserting the last two estimates in \eqref{comparison} we get for some $\eta>0$
\begin{align*}
 \mF[\rho]-\mF[\tilde \rho]
 &\ge \left(\frac{1}{m-1} +\chi\frac{C_*}{2k}  M_\rho({A_H})^{1+k/N} +  \chi\,\frac{C_{k,N} H^{-\eta}}{k}\right) \,\int_{B_{A_H}}\rho^{m}\, dx
 \notag\\
 &\quad
 +\frac{\eps_r}{m-1}
 +\frac{\chi}{2k} I_3(r)\,.
\end{align*}
for any  $r>1$ and any large enough $H$. First of all, notice that $\int_{B_{A_H}}\rho^{m}\, dx$ is strictly positive since we are assuming that $\rho$ is unbounded. We can therefore fix $H$ large enough such that the constant in front of $\int_{B_{A_H}}\rho^{m}$ is strictly positive. Secondly, we have already proven that $\eps_r$ and $I_3(r)$ vanish as $r\to \infty$, so we can choose $r$ large enough such that
\[
 \mF[\rho]-\mF[\tilde \rho]>0\, ,
\]
contradicting the minimality of $\rho$. We conclude that minimisers of $\mF$ are bounded. Finally, we can just use the Euler--Lagrange equation \eqref{ELmin} and the same argument as for Corollary \ref{cor:compactsupp} to prove that $\rho$ is compactly supported.
\end{proof}
%%%%%%%%%%%%%%%%%%%%%%%%%%%%%%%%%%%%%%%%%%%%%%%%%%%%
%%%%%%%%%%%%%%%%%%%%%%%%%%%%%%%%%%%%%%%%%%%%%%%%%%%%
% REGULARITY
%%%%%%%%%%%%%%%%%%%%%%%%%%%%%%%%%%%%%%%%%%%%%%%%%%%%
%%%%%%%%%%%%%%%%%%%%%%%%%%%%%%%%%%%%%%%%%%%%%%%%%%%%
\subsection{Regularity Properties of Global Minimisers}\label{sec:regmins}

This section is devoted to the regularity properties of global minimisers. With enough regularity, global minimisers satisfy the conditions of Definition \ref{def:sstates}, and are therefore stationary states of equation \eqref{eq:KS}. This will allow us to complete the proof of Theorem \ref{thm:main1}.

We begin by introducing some notation and preliminary results. As we will make use of the H\"{o}lder regularising properties of the fractional Laplacian, see \cite{Ros-Oton,Silvestre}, the notation
$$
c_{N,s}(-\Delta)^s S_k=\rho\,, \quad s \in (0,N/2)
$$
is better adapted to the arguments that follow, fixing $s=(k+N)/2$, and we will therefore state the results in this section in terms of $s$.\\
One fractional regularity result that we will use repeatedly in this section follows directly from the HLS inequality \eqref{eq:HLS} applied with $k=2s-N$: for any 
$$
 s\in (0,N/2)\, , \qquad  1<p<\frac{N}{2s}\, ,
 \qquad q=\frac{Np}{N-2sp}\, ,
$$
we have
\begin{equation}\label{fracreg}
 (-\Delta)^s f \in L^p\left(\RR^N\right)
 \; \Rightarrow \;
 f \in L^q\left(\RR^N\right)\, .
\end{equation}

Further, for $1\leq p<\infty$ and $s\geq 0$, we define the \emph{Bessel potential space} $\mathcal{L}^{2s,p}(\RR^{N})$
 as made by all functions $f\in L^{p}(\RR^{N})$ such that
$
(I-\Delta)^{s}f\in L^{p}(\RR^{N}),
$ meaning that $f$ is the Bessel potential of an $L^{p}(\mathbb{R}^N)$ function (see \cite[pag. 135]{Stein}). Since we are working with the operator $(-\Delta)^s$ instead of $(I-\Delta)^s$,
we make use of a characterisation of the space $\mathcal{L}^{2s,p}(\mathbb{R}^N)$ in terms of Riesz potentials.  For $1<p<\infty$ and $0<s<1$ we have
\begin{equation}\label{bessel2}
\mathcal{L}^{2s,p}(\RR^{N})=\left\{f\in L^{p}(\RR^{N}): f=g\ast W_{2s-N}, 	\; g\in L^{p}(\RR^{N})\right\},
\end{equation}
see \cite[Theorem 26.8, Theorem 27.3]{Samko}, see also exercise 6.10 in Stein's book \cite[pag. 161]{Stein}.
%Another possible definition of the Bessel potential spaces is the following (): \end{equation}
Moreover, for $1\leq p<\infty$ and $0<s<1/2$
we define the \emph{fractional Sobolev} space $\mW^{2s,p}(\RR^{N})$ by
\[
\mW^{2s,p}\left(\RR^{N}\right):= \left\{f \in L^{p}(\RR^{N}) :
\iint_{\RR^{N}\times\RR^{N}}\frac{|f(x)-f(y)|^{p}}{|x-y|^{N+2s p}}dx\,dy
<\infty
\right\}.
\]
We have the embeddings
\begin{equation}\label{besovembedding}
\mathcal{L}^{2s,p}(\mathbb{R}^N)\subset \mW^{2s,p}(\mathbb{R}^N)\quad \mbox{for}\quad p\geq 2 ,\quad s\in(0,1/2)\, ,
\end{equation}
\begin{equation}\label{sobolevembedding}
\mW^{2s,p}\left(\RR^N\right) \subset C^{0,\beta}\left(\RR^N\right)\quad \mbox{for}\quad \beta= 2s-N/p,\quad p> N/2s ,\quad s\in(0,1/2),
\end{equation}
  see   \cite[pag. 155]{Stein} and \cite[Theorem 4.4.7]{Demengel} respectively.
%Note that the embedding \eqref{besovembedding} is not valid for $s=1/2$. However, it follows from the group property of the Riesz potential that
%\begin{equation}\label{besov1/2}
% f\in \mL^{1,p}\left(\RR^N\right) \; \Rightarrow \;
% f \in \mL^{2r,p}\left(\RR^N\right)
% \quad \mbox{for all}\quad p > \frac{N}{N-1} ,\quad r\in(0,1/2)\, ,
%\end{equation}
%For the proof, see Appendix \ref{sec:Rieszestimate} Lemma \ref{lem:besov1/2}.\\

Let $s\in (0,1)$ and $\alpha > 0$ such that $\alpha+2s$ is not an integer. Since $c_{N,s}(-\Delta)^sS_k=\rho$ holds in $\mathbb{R}^N$, then we have from \cite[Theorem 1.1, Corollary 3.5]{Ros-Oton} (see also \cite[Proposition 5.2]{CafSting}) that
\begin{equation}\label{ros}
\|S_k\|_{C^{0,\alpha+2s}(\overline{B_{1/2}(0)})}\le c\left(\|S_k\|_{L^{\infty}(\mathbb{R}^N)}+\|\rho\|_{C^{0,\alpha}(\overline{B_1(0)})}\right)\,,
\end{equation}
  with the convention that if $\alpha\geq 1$ for any open set $U$ in $\RR^{N}$, then $C^{0,\alpha}(\overline{U}):=C^{\alpha',\alpha''}(\overline{U})$, where $\alpha'+\alpha''=\alpha$, $\alpha''\in[0,1)$ and $\alpha'$ is the greatest integer less than or equal to $\alpha$. With this notation, we have $C^{0,1}(\RR^N)=C^{1,0}(\RR^N)=\mW^{1,\infty}(\RR^N)$. In particular, using \eqref{ros} it follows that for $\alpha > 0$, $s \in (0,1)$ and $\alpha+2s$ not an integer,
  \begin{equation}\label{HR}
  \|S_k\|_{C^{0,\alpha+2s}(\RR^{N})}\le c\left(\|S_k\|_{L^{\infty}(\mathbb{R}^N)}+\|\rho\|_{C^{0,\alpha}(\RR^{N})}\right)\, .
  \end{equation}
Moreover, rescaling inequality \eqref{ros} in any ball $B_{R}(x_0)$ where $R\neq1$ we have the estimate
\begin{equation}\begin{aligned}
&\sum_{\ell=0}^{\alpha_{2}}R^{\ell}\|D^{\ell} S_{k}\|_{L^{\infty}(B_{R/2}(x_{0}))}+R^{\alpha+2s}[D^{\alpha_{1}}S_{k}]_{C^{0,\alpha+2s-\alpha_{2}}(B_{R/2}(x_{0}))}
\\
&\leq C\left[\|S_{k}\|_{L^{\infty}(\mathbb{R}^N)}+ \sum_{\ell=0}^{\alpha_{1}}R^{2s+\ell}\|D^{\ell} \rho\|_{L^{\infty}(B_{R}(x_{0}))}
+R^{\alpha+2s}[D^{\alpha_{1}}\rho]_{C^{0,\alpha-\alpha_{1}}(B_{R}(x_{0}))}  \right]\label{rosrescaled}
\end{aligned}\end{equation}
where $\alpha_{1}, \alpha_{2}$ are the greatest integers less than $\alpha$ and $\alpha+2s$ respectively. In \eqref{rosrescaled} the quantities $\|D^{\ell} S_{k}\|_{L^{\infty}}$ and $[D^{\ell}\rho]_{C^{0,\alpha}}$ denote the sum of the $L^{\infty}$ norms and the $C^{0,\alpha}$ seminorms of the derivatives $D^{(\beta)} S_{k}$, $D^{(\beta)} \rho$ of order $\ell$ (that is $|\beta|=\ell$).

%%%%%%%%%%%%%%%%%%%

Finally, we recall the definition of $m_c$ and $m^*$ in \eqref{mstar} in terms of $s$:
$m_c:=2-\frac{2s}{N}$ and
 \begin{equation*}%\label{mstar}
  m^*:=
  \begin{cases}
  \dfrac{2-2s}{1-2s}\,  \qquad &\text{if} \quad N\geq 1 \quad \text{and} \quad s \in (0,1/2)\, , \\
  + \, \infty &\text{if} \quad N\geq 2 \quad \text{and} \quad s \in [1/2,N/2)\, .
  \end{cases}
 \end{equation*}

Let us begin by showing that global minimisers of $\mF$ enjoy the good H\"{o}lder regularity in the most singular range, as long as diffusion is not too slow.

\begin{theorem}\label{thm:regmin}
 Let $\chi>0$ and $s \in (0,N/2)$. If $m_c<m< m^*$, then any global minimiser $\rho \in \mY$ of $\mF$ satisfies $S_k=W_k\ast \rho \in\mathcal{W}^{1,\infty}(\mathbb{R}^N)$, $\rho^{m-1}\in\mathcal{W}^{1,\infty}(\mathbb{R}^N)$ and $\rho \in C^{0,\alpha}(\RR^N)$ with $\alpha=\min\{1,\tfrac{1}{m-1}\}$.
\end{theorem}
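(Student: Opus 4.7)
The overall plan is to reduce everything to establishing $S_k \in \mW^{1,\infty}(\RR^N)$: once this is in hand, the Euler--Lagrange identity \eqref{ELmin} from Proposition \ref{prop:characmin} immediately yields $\rho^{m-1}\in\mW^{1,\infty}(\RR^N)$, and then the boundedness of $\rho^{m-1}$ (Theorem \ref{prop:Linfty}) together with the elementary estimate $|a^{1/(m-1)}-b^{1/(m-1)}|\le C\,|a-b|^{\min(1,1/(m-1))}$ on compact subsets of $[0,\infty)$ gives the claimed $\rho\in C^{0,\min(1,1/(m-1))}(\RR^N)$. The range $s>1/2$ (equivalently $k>1-N$) needs no bootstrap at all: Lemma \ref{lem:regS}(ii) applies unconditionally and produces $\nabla S_k\in L^\infty$ directly. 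So the real work is confined to $s\le 1/2$.

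For $s\le 1/2$, I would run a Schauder-type bootstrap based on the estimate \eqref{HR}. As an initial Hölder regularity, note that $\rho$ is bounded and compactly supported (Theorem \ref{prop:Linfty}), hence $\rho\in L^p(\RR^N)$ for every $p\in(1,\infty)$; combining $c_{N,s}(-\Delta)^s S_k=\rho$ with the Bessel potential characterisation \eqref{bessel2} gives $S_k\in\mathcal{L}^{2s,p}(\RR^N)$ for all such $p$, and chaining \eqref{besovembedding} with \eqref{sobolevembedding} yields $S_k\in C^{0,\beta}(\RR^N)$ for any $\beta<2s$. The Euler--Lagrange relation then transfers this to $\rho^{m-1}$, and the power-function estimate above gives $\rho\in C^{0,\alpha_0}$ with $\alpha_0=\beta\min(1,1/(m-1))$. (The endpoint $s=1/2$ is borderline for this embedding, but any positive Hölder exponent of $\rho$ suffices to close the argument in one step of \eqref{HR}, and can be obtained directly from $(-\Delta)^{1/2}S_k\in L^\infty\cap L^1$.)

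The iteration then proceeds as follows: given $\rho\in C^{0,\alpha_n}$ with $\alpha_n+2s\notin\mathbb{N}$, the Schauder estimate \eqref{HR} upgrades $S_k$ to $C^{0,\alpha_n+2s}$; if $\alpha_n+2s>1$ then $S_k\in C^{1,\alpha_n+2s-1}\subset\mW^{1,\infty}$ and the bootstrap closes. Otherwise \eqref{ELmin} and the power-function estimate give $\alpha_{n+1}=(\alpha_n+2s)\min(1,1/(m-1))$. When $m\le 2$ this reduces to $\alpha_{n+1}=\alpha_n+2s$, so the threshold $1-2s$ is exceeded after finitely many steps; when $m>2$ the recursion contracts toward the fixed point $\alpha^*=2s/(m-2)$, and the bootstrap succeeds precisely when $\alpha^*>1-2s$. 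A short algebraic manipulation shows this is equivalent to $m(1-2s)<2-2s$, i.e.\ $m<(2-2s)/(1-2s)=m^*$, which is exactly the theorem's hypothesis.

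The main obstacle, and the most delicate step of the proof, is this quantitative accounting: identifying the correct recursion, matching its fixed point to the Lipschitz threshold $1-2s$, and observing that the inequality which makes the scheme terminate is exactly $m<m^*$. The integer-exponent cases excluded by \eqref{HR} can always be avoided by infinitesimal adjustments of $\alpha_n$ and present no real difficulty.
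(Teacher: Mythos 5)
Your proposal is correct and follows essentially the same approach as the paper: an initial Hölder estimate via $L^p\to\mathcal{L}^{2s,p}\to\mW^{2s,p}\to C^{0,\beta}$, a Schauder bootstrap via \eqref{HR} coupled to the Euler--Lagrange relation \eqref{ELmin}, the observation that Lemma \ref{lem:regS}(ii) settles $s>1/2$ at once, and a separate hand for the endpoint $s=1/2$. The minor reorganisation you introduce --- writing both subcases $m\le 2$ and $m>2$ as a single recursion $\alpha_{n+1}=(\alpha_n+2s)\min\{1,\tfrac1{m-1}\}$ and reading off the fixed point $\alpha^*=2s/(m-2)$, whose comparison with $1-2s$ reproduces exactly $m<m^*$ --- is equivalent to the paper's two-case split (fixed $n$ with $1/(n+1)<2s\le 1/n$ for $m\le 2$, geometric series $\sum_j 2s/(m-1)^j=2s(m-1)/(m-2)$ for $m>2$) and is arguably a cleaner way to package the same computation. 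The only place you are slightly compressed is the endpoint $s=1/2$, where "can be obtained directly from $(-\Delta)^{1/2}S_k\in L^\infty\cap L^1$" glosses over the step the paper does explicitly: lower the Bessel order, $S_k\in\mathcal L^{1,p}\subset\mathcal L^{2r,p}$ for $r<1/2$, and then apply \eqref{besovembedding}--\eqref{sobolevembedding} to land in $C^{0,\gamma}$ for any $\gamma<1$; once this is spelled out your "one step of \eqref{HR}" closing argument is exactly what the paper does.
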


\begin{proof}
 Recall that the global minimiser $\rho \in \mY$ of $\mF$ is radially symmetric non-increasing and compactly supported by Theorem \ref{thm:Emins} and Theorem \ref{prop:Linfty}. Since $\rho \in L^1\left(\RR^N\right)\cap L^\infty\left(\RR^N\right)$ by Theorem \ref{prop:Linfty}, we have $\rho \in L^p\left(\RR^N\right)$ for any $1< p<\infty$. Since $\rho=c_{N,s}(-\Delta)^s S_k$, it follows from \eqref{fracreg} that $S_k\in L^{q}(\mathbb{R}^N)$, $q=\tfrac{Np}{N-2sp}$ for all $1<p<\tfrac{N}{2s}$, that is $S_k\in L^{p}(\mathbb{R}^N)$ for all $p \in (\tfrac{N}{N-2s},\infty)$.
   Then, if $s\in(0,1)$, since $S_{k}$ is the Riesz potential of the density $\rho$ in $L^{p}$,   
   by the characterisation \eqref{bessel2} of the Bessel potential space, we conclude that $S_k \in \mL^{2s,p}(\RR^N)$ for all $p>\tfrac{N}{N-2s}$. Let us first consider $s<1/2$, as the cases $1/2<s<N/2$ and $s=1/2$ follow as a corollary. \\

 \fbox{$0<s<1/2$} In this case, we have the embedding \eqref{besovembedding} and so $S_k \in \mW^{2s,p}(\RR^N)$ for all $p\geq 2>\tfrac{N}{N-2s}$ if $N\geq 2$ and for all  $p>\max\{2,\tfrac{1}{1-2s}\}$ if $N=1$. Using \eqref{sobolevembedding}, we conclude that $S_k \in C^{0,\beta}\left(\RR^N\right)$  with   $$\beta: = 2s-N/p,$$ for any $p>\tfrac{N}{2s}> 2$   if $N\geq2$ and for any $p>\max\{\tfrac{1}{2s},\tfrac{1}{1-2s}\}$ if $N=1$. Hence $\rho^{m-1} \in C^{0,\beta}\left(\RR^N\right)$ for the same choice of $\beta$ using the Euler--Lagrange condition \eqref{ELmin} since $\rho^{m-1}$ is the truncation of a function which is $S_k$ up to a constant.

	  Note that $m_c \in (1,2)$ and $m^*>2$. In what follows we split our analysis into the cases $m_c<m\leq 2$ and $2<m < m^*$, still assuming $s<1/2$.
 If $m\leq 2$, the argument follows along the lines of \cite[Corollary 3.12]{CCH1} since $\rho^{m-1} \in C^{0,\alpha}(\RR^N)$ implies that $\rho$ is in the same H\"{o}lder space for any $\alpha \in (0,1)$.
    Indeed, in such case we bootstrap in the following way.
 % and if $s\in(1/4,1/2)$, we can ensure $\beta>1-2s$ for large enough $p$, obtaining the required H\"{o}lder regularity. For $s\leq 1/4$ and $m\leq 2$ on the other hand, we need to bootstrap a bit further.
  Let us fix $n \in \N$ such that
 \begin{equation}\label{bootrange}
 \frac{1}{n+1}<2s\leq \frac{1}{n}
 \end{equation}
 and let us define
 \begin{equation}\label{bn}
 \beta_n:=\beta+(n-1)2s=2ns-N/p.
 \end{equation}
 Form \eqref{bootrange} and \eqref{bn} we see that by choosing large enough $p$ there hold $1-2s<\beta_n<1$.
  Note that $S_k \in L^\infty\left(\RR^N\right)$ by Lemma \ref{lem:regS}, and  if   $\rho \in C^{0,\gamma}\left(\RR^N\right)$ for some $\gamma \in (0,1)$ such that $\gamma+2s<1$, then $S_k \in C^{0,\gamma+2s}\left(\RR^N\right)$ by \eqref{HR}, implying $\rho^{m-1} \in C^{0,\gamma+2s}\left(\RR^N\right)$ using the Euler--Lagrange conditions \eqref{ELmin}. Therefore $\rho \in C^{0,\gamma+2s}\left(\RR^N\right)$ since $m \in (m_c,2]$. Iterating this argument $(n-1)$ times starting with $\gamma=\beta$ gives $\rho \in C^{0,\beta_n}\left(\RR^N\right)$  . Since $\beta_n<1$ and $\beta_n+2s>1$, a last application of \eqref{HR} yields $S_k\in\mathcal{W}^{1,\infty}(\mathbb{R}^N)$, so that $\rho^{m-1}\in\mathcal{W}^{1,\infty}(\mathbb{R}^N)$, thus $\rho\in\mathcal{W}^{1,\infty}(\mathbb{R}^N)$.
 This concludes the proof in the case $m\leq 2$.

 Now, let us assume $2<m < m^*$ and $s<1/2$. Recall that $\rho^{m-1}\in C^{0,\gamma}\left(\RR^N\right)$ for any $\gamma<2s$, and so $\rho\in C^{0,\gamma}\left(\RR^N\right)$ for any $\gamma<\tfrac{2s}{m-1}$. By \eqref{HR} we get $S_k\in C^{0,\gamma}\left(\RR^N\right)$ for any $\gamma<\tfrac{2s}{m-1}+2s$, and the same for $\rho^{m-1}$ by the Euler--Lagrange equation \eqref{ELmin}.   Once more with a bootstrap  argument, we obtain improved H\"{o}lder regularity for $\rho^{m-1}$. Indeed, since
\begin{equation}\label{series}
\sum_{j=0}^{+\infty}\frac{2s}{(m-1)^j}=\frac{2s(m-1)}{m-2}\,
\end{equation}
and since  $m < m^*$ means $\tfrac{2s(m-1)}{m-2} > 1$, after taking a suitably large number of iterations we get $S_k\in \mathcal{W}^{1,\infty}(\mathbb{R}^N)$ and $\rho^{m-1} \in \mathcal{W}^{1,\infty}(\RR^N)$.  Hence, $\rho \in  C^{0,{1}/{(m-1)}}\left(\RR^N\right)$.\\
%Also, $m<m^*$ is equivalent to $1-2s<\tfrac{1}{m-1}$, and so we achieve the required H\"{o}lder regularity.\\

\fbox{$N\geq 2$, $1/2\leq s<N/2$}
%In this case, it is enough to show that $\rho \in C^{0,\gamma}(\RR^N)$ for some $\gamma \in (0,1)$ as $(1-2s)_+=0$.
%%%%%%%%%%%%%%%%
   We start with the case $s=1/2$. We have $S_k\in L^p(\mathbb{R}^N)$ for any $p>\tfrac{N}{N-1}$ as shown at the beginning of this proof. By \eqref{bessel2} we get $S_k \in \mL^{1,p}\left(\RR^N\right)$ for all $p>\tfrac{N}{N-1}$. Then we also have $S_k \in \mL^{2r,p}(\RR^N)$ for all $p>\tfrac{N}{N-1}$ and  for all $r \in (0,1/2)$ by the embeddings between Bessel potential spaces, see \cite[pag. 135]{Stein}.
 Noting that $2\geq \tfrac{N}{N-1}$ for $N\geq 2$, by \eqref{besovembedding} and \eqref{sobolevembedding} we get $S_k\in C^{0,2r-N/p}(\mathbb{R}^N)$ for any $r\in(0,1/2)$ and any $p>\tfrac{N}{2r}$. That is, $S_k\in C^{0,\gamma}(\mathbb{R}^N)$ for any $\gamma\in (0,1)$. 
%  \comment{
 By the Euler--Lagrange equation \eqref{ELmin}, $\rho \in C^{0,\gamma \alpha}(\RR^N)$ with $\alpha=\min\{1,\tfrac{1}{m-1}\}$, and so \eqref{HR} for $s=1/2$ implies $S_k\in\mathcal{W}^{1,\infty}(\mathbb{R}^N)$. Again by the Euler--Lagrange equation \eqref{ELmin}, we obtain $\rho^{m-1}\in\mathcal{W}^{1,\infty}(\mathbb{R}^N)$.\\
%  }\\
%  Since $m<m^*$ we may choose $\gamma$ close enough to $1$ such that $\tfrac{\gamma}{\min\{1,m-1\}}+2s>1$. Therefore \eqref{HR} implies $S_k\in\mathcal{W}^{1,\infty}(\mathbb{R}^N)$.By the Euler--Lagrange equation \eqref{ELmin}, we obtain again $\rho^{m-1}\in\mathcal{W}^{1,\infty}(\mathbb{R}^N)$.\\
%%%%%%%%%%%%%%
If $1/2<s<N/2$ on the other hand, we obtain directly that $S_k \in \mW^{1,\infty}(\RR^N)$ by Lemma \ref{lem:regS}, and so 	    $\rho^{m-1}\in \mathcal{W}^{1,\infty}(\mathbb{R}^N)$.  \\
We conclude that $\rho\in C^{0,\alpha}(\mathbb{R}^N)$ with $\alpha=\min\{1,\tfrac{1}{m-1}\}$ for any $1/2\leq s<N/2$.
\end{proof}

\begin{remark}\label{singularrange} \rm
 If $m\geq m^*$ and $s<1/2$, we recover some H\"{o}lder regularity, but it is not enough to show that global minimisers of $\mF$ are stationary states of \eqref{eq:KS}. More precisely, $m \geq m^*$ means $\tfrac{2s(m-1)}{m-2} \leq 1 $, and so it follows from \eqref{series} that $\rho\in C^{0,\gamma}\left(\RR^N\right)$ for any $\gamma<\tfrac{2s}{m-2}$. Note that $m \geq m^*$ also implies $\tfrac{2s}{m-2}\leq 1-2s$, and we are therefore not able to go above the desired H\"{o}lder exponent $1-2s$.
\end{remark}

\begin{remark}\rm
In the arguments of Theorem \ref{thm:regmin} one could choose to
directly bootstrap on fractional Sobolev spaces.
In fact, for $0<s<1/2$  and $m > 2$  we have that $\rho^{m-1}\in \mW^{2s,p}(\RR^N)$ implies $\rho\in \mW^{\frac{2s}{m-1},p(m-1)}(\RR^N)$.
Indeed, let $\alpha<1$ and $u\in \mW^{\alpha,p}(\RR^N)$, where  and $p\in [1,\infty)$. By the algebraic inequality $||a|^{\alpha}-|b|^{\alpha}|\leq C|a-b|^{\alpha}$ we have
\[
\iint_{\RR^N\times\RR^N}\frac{||u(x)|^{\alpha}-|u(y)|^{\alpha}|^{p/\alpha}}{|x-y|^{N+\alpha 2s(p/\alpha)}}\,dxdy\leq c \iint_{\RR^N\times\RR^N}\frac{|u(x)-u(y)|^{p}}{|x-y|^{N+2sp}}\,dxdy\, ,
\]
thus $|u|^{\alpha}\in \mW^{\alpha s,p/\alpha}(\RR^N)$.  This property is also valid for Sobolev spaces with integer order, see \cite{Mironescu}.
In particular, thanks to this property, in case $m\ge m^*$ we may obtain $\rho^{m-1}\in \mW^{\alpha,p}(\RR^N)$ for any $\alpha<\frac{2s(m-1)}{m-2}$ and any large enough $p$, hence \eqref{sobolevembedding} implies that $\rho$ has the H\"{o}lder regularity stated in Remark \ref{singularrange}.
\end{remark}

We are now ready to show that global minimisers possess the good regularity properties to be stationary states of equation \eqref{eq:KS} according to Definition \ref{def:sstates}.

\begin{theorem}\label{thm:minsstates}
 Let $\chi>0$, $s \in (0,N/2)$ and $m_c<m < m^*$. Then all global minimisers of $\mF$ in $\mY$ are stationary states of equation \eqref{eq:KS} according to Definition \ref{def:sstates}.
\end{theorem}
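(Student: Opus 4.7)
The plan is to verify, for any global minimiser $\rho$ of $\mF$ in $\mY$, each of the conditions listed in Definition \ref{def:sstates}. From Theorems \ref{thm:Emins}, \ref{prop:Linfty} and \ref{thm:regmin} we already know that $\rho$ is radially non-increasing, compactly supported, and uniformly bounded, with $\|\rho\|_1 = 1$, and that $S_k = W_k\ast\rho \in \mW^{1,\infty}(\RR^N)$, $\rho^{m-1}\in \mW^{1,\infty}(\RR^N)$, and $\rho \in C^{0,\alpha}(\RR^N)$ with $\alpha = \min\{1, 1/(m-1)\}$. The remaining points to verify are the local Sobolev regularity $\rho^m \in \mW^{1,2}_{loc}(\RR^N)$, the local integrability $\nabla \bar S_k \in L^1_{loc}(\RR^N)$, the H\"older condition in the singular range $-N < k \leq 1-N$, and the stationary equation \eqref{eq:sstates} in the distributional sense.

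The regularity conditions follow cleanly from what is already in hand. Writing $\rho^m = f(\rho^{m-1})$ with $f(t) = t^{m/(m-1)}$, which is locally Lipschitz on $[0, \|\rho^{m-1}\|_\infty]$, the chain rule for Sobolev functions yields $\rho^m \in \mW^{1,\infty}_{loc}(\RR^N) \subset \mW^{1,2}_{loc}(\RR^N)$ together with the a.e.\ identity
$$\nabla \rho^m = \tfrac{m}{m-1}\, \rho\, \nabla \rho^{m-1}.$$
The bound $\nabla S_k \in L^\infty(\RR^N) \subset L^1_{loc}(\RR^N)$ comes from Lemma \ref{lem:regS}(ii): in the singular range one needs $\rho \in C^{0,\alpha}$ for some $\alpha \in (1-k-N, 1) = (1-2s, 1)$. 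For $m \leq 2$ the exponent $\alpha = 1$ is more than enough, while for $m > 2$ the hypothesis $m < m^* = (2-2s)/(1-2s)$ translates precisely to $1/(m-1) > 1-2s$, so Theorem \ref{thm:regmin} supplies an admissible H\"older exponent. This simultaneously takes care of the additional H\"older condition on $\rho$ required in Definition \ref{def:sstates}.

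To obtain the distributional equation, I would differentiate the Euler--Lagrange identity \eqref{ELmin}. Since $\rho^{m-1}\in \mW^{1,\infty}(\RR^N)$ is the positive part of the affine function $\tfrac{m-1}{m}(D[\rho] - \chi S_k)$ of the $\mW^{1,\infty}$ function $S_k$, the standard calculus rule for the positive part gives
$$\nabla \rho^{m-1} = -\tfrac{m-1}{m}\chi\, \nabla S_k\, \ind_{\{\rho > 0\}}\quad \text{a.e. in }\RR^N.$$
Multiplying by $\tfrac{m}{m-1}\rho$ and using $\rho\, \ind_{\{\rho > 0\}} = \rho$, we arrive at $\nabla \rho^m = -\chi\, \rho\, \nabla S_k$ almost everywhere, hence in $\mathcal{D}'(\RR^N)$ since both sides are locally bounded. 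The main delicacy of the argument lies not in this last step but in pinning down the correct H\"older exponent for $\rho$ in the most singular range $k \leq 1-N$, which is exactly why the upper bound $m < m^*$ is needed; that obstacle is already settled by Theorem \ref{thm:regmin}, so the present statement amounts to a careful assembly of the regularity results proved above.
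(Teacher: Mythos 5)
Your proposal is correct and follows essentially the same route as the paper: invoke Theorem \ref{thm:regmin} (noting that $m<m^*$ is exactly $1-2s<1/(m-1)$, which supplies the H\"older exponent required by Definition \ref{def:sstates} in the singular range), then differentiate the Euler--Lagrange identity \eqref{ELmin} and multiply by $\rho$ using $\rho\nabla\rho^{m-1}=\tfrac{m-1}{m}\nabla\rho^m$. Your write-up merely makes explicit a few steps the paper leaves implicit (the chain rule giving $\rho^m\in\mW^{1,2}_{loc}$ and the handling of the positive part), all of which are sound.
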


\begin{proof}
Note that $m<m^*$ means $1-2s<1/(m-1)$, and so thanks to Theorem \ref{thm:regmin}, $S_k$ and $\rho$ satisfy the regularity conditions of Definition \ref{def:sstates}. Further, since $\rho^{m-1} \in \mathcal{W}^{1,\infty}\left(\RR^N\right)$, we can take gradients on both sides of the Euler--Lagrange condition \eqref{ELmin}. Multiplying by $\rho$ and writing $\rho \nabla \rho^{m-1}=\tfrac{m-1}{m}\nabla \rho^{m}$, we conclude that global minimisers of $\mF$ in $\mY$ satisfy relation \eqref{eq:sstates} for stationary states of equation \eqref{eq:KS}.
\end{proof}

In fact, we can show that global minimisers have even more regularity inside their support.

\begin{theorem}\label{cor:Cinfty}
 Let $\chi>0$,  $m_c<m$ and $s \in (0,N/2)$. If $\rho \in \mY$ is a global minimiser of $\mF$, then $\rho$ is $C^\infty$ in the interior of its support.
\end{theorem}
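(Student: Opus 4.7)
The plan is to run a bootstrap argument based on the Euler--Lagrange equation \eqref{ELmin} and the fractional Schauder estimate \eqref{rosrescaled}, localised to any ball compactly contained in the interior of $\mathrm{supp}(\rho)$.

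First I would fix an arbitrary point $x_0$ in the interior of $\mathrm{supp}(\rho)$ and a small radius $R>0$ such that $\overline{B_{2R}(x_0)}\subset\mathrm{supp}(\rho)$. Since $\rho$ is radially non-increasing and continuous (indeed, by Theorem \ref{thm:regmin} and Remark \ref{singularrange} we already know that $\rho\in C^{0,\alpha_0}(\RR^N)$ for some $\alpha_0>0$, whatever the value of $m>m_c$), there exists $\rho_0>0$ with $\rho\geq\rho_0$ on $B_{2R}(x_0)$. Thus on $B_{2R}(x_0)$ the Euler--Lagrange equation reads
\begin{equation*}
\rho^{m-1}(x)=\frac{m-1}{m}\bigl(D[\rho]-\chi S_k(x)\bigr),
\end{equation*}
without the positive part, and the smooth map $\Phi(t):=(mt/(m-1))^{1/(m-1)}$ is $C^\infty$ on a neighbourhood of $[\rho_0^{m-1},\|\rho\|_\infty^{m-1}]$, so $\rho=\Phi(D[\rho]/\cdot - (\chi/\cdot)S_k)$ has on $B_{2R}(x_0)$ exactly the regularity of $S_k$.

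The bootstrap then proceeds as follows. Assume inductively that $\rho\in C^{0,\alpha_n}(B_{r_n}(x_0))$ with $\alpha_n>0$ and $B_{r_n}(x_0)\subset B_{2R}(x_0)$. Possibly after a tiny perturbation of $\alpha_n$ to avoid the integer case, the Schauder estimate \eqref{rosrescaled} applied to $c_{N,s}(-\Delta)^s S_k=\rho$ (using $S_k\in L^\infty(\RR^N)$ by Lemma \ref{lem:regS}) yields
\begin{equation*}
S_k\in C^{0,\alpha_n+2s}(B_{r_n/2}(x_0)).
\end{equation*}
Composing with $\Phi$ as above on the smaller ball $B_{r_n/2}(x_0)$, we obtain $\rho\in C^{0,\alpha_n+2s}(B_{r_n/2}(x_0))$, so we may set $\alpha_{n+1}=\alpha_n+2s$ and $r_{n+1}=r_n/2$. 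Iterating $n$ times we gain regularity $\alpha_0+2ns$, hence after finitely many steps we arrive at any prescribed $C^{k,\beta}$ regularity of $\rho$ on a ball centred at $x_0$: the non-integer Hölder indices in \eqref{rosrescaled} pass through $\Phi$ using the Faà di Bruno formula for compositions of Hölder functions, since $\Phi$ is smooth on the range of its argument (which stays bounded away from the singular point $t=0$ by construction). Since $x_0$ is an arbitrary interior point of $\mathrm{supp}(\rho)$ and $k\in\mathbb N$ is arbitrary, the conclusion $\rho\in C^\infty\bigl(\mathrm{int}\,\mathrm{supp}(\rho)\bigr)$ follows.

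The main technical difficulty is precisely this last point: keeping control of the composition with $\Phi$ at \emph{each} iteration, i.e.\ ensuring that passing from $C^{\ell,\beta}$ regularity of $S_k$ to the same regularity of $\rho$ via $\Phi$ is legitimate. This is what forces us to localise so that $\rho$ remains uniformly bounded below by a positive constant, making $\Phi$ a $C^\infty$-diffeomorphism on the relevant range of values. The corresponding Faà di Bruno bookkeeping is routine once $\rho\geq\rho_0>0$ is secured on a neighbourhood of $x_0$, and the shrinking radii $r_n=R\cdot 2^{-n}$ cause no problem since we are proving regularity pointwise in the interior.
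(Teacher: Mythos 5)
Your bootstrap is essentially the paper's argument for the range $s\in(0,1)$: localise to a ball where $\rho$ is bounded below, use the interior Schauder estimate \eqref{rosrescaled} to upgrade $S_k$ by $2s$ Hölder derivatives, transfer this to $\rho^{m-1}$ via the Euler--Lagrange identity (the positive part being inactive where $\rho\geq\rho_0>0$) and then to $\rho$ by composing with the smooth map $t\mapsto t^{1/(m-1)}$ away from $t=0$, and iterate. That part is correct.

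The gap is that you apply \eqref{rosrescaled} for all $s\in(0,N/2)$, whereas the Schauder estimate \eqref{ros}--\eqref{rosrescaled} is only stated (and only available from the cited references on the fractional Laplacian) for $s\in(0,1)$. When $N\geq 3$ the theorem also covers $s\in[1,N/2)$, i.e.\ $k\in[2-N,0)$, which includes the Newtonian case; your argument as written does not apply there. The paper handles this by factoring the operator: it chooses $s_1,\dots,s_l\in(0,1)$ with $\sum_i s_i=s$, introduces the intermediate potentials $S_k^{l+1}=S_k$, $S_k^j=\Pi_{i=j}^l(-\Delta)^{s_i}S_k$ with $S_k^1=\rho$, checks via Lemma \ref{lem:regS}(i) that every $S_k^j$ is in $L^\infty(\RR^N)$ (so that the Schauder estimate may be invoked at each stage), and then chains the estimates $S_k^j\in C^{0,\gamma}(B_\sigma)\Rightarrow S_k^{j+1}\in C^{0,\gamma+2s_j}(B_{\sigma/2})$ to recover the gain of $2s$ per sweep. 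Some such device is genuinely needed: without it your induction step simply has no valid Schauder input for $s\geq 1$. The rest of your proposal (the Fa\`a di Bruno bookkeeping for the composition, the shrinking radii) is fine and matches the paper's reasoning.
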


\begin{proof}
 By Theorem \ref{thm:regmin} and Remark \ref{singularrange}, we have $\rho \in C^{0,\alpha}(\RR^N)$ for some $\alpha \in (0,1)$.
Since $\rho$ is radially symmetric non-increasing, the interior of $\supp(\rho)$ is a ball centered at the origin, which we denote by $B$. Note also that $\rho \in L^1(\RR^N) \cap L^\infty(\RR^N)$ by Theorem \ref{prop:Linfty}, and so $S_k \in L^\infty(\RR^N)$ by Lemma \ref{lem:regS}.\\
%%%%%%%%%%%%%%%%%%%%%%%%%%%%%%%%%%%%%%%%%%%%%%%%%%%%%%%%%
 Assume first that $s\in (0,1)\cap (0,N/2)$.
 Applying \eqref{rosrescaled} with $B_R$ centered at a point within $B$ and such that $B_R\subset\subset B$, we obtain   $S_k\in C^{0,\gamma}(B_{R/2})$ for any $\gamma<\alpha+2s$.   It follows from the Euler--Lagrange condition \eqref{ELmin} that $\rho^{m-1}$ has the same regularity as $S_k$ on $B_{R/2}$, and since $\rho$ is bounded away from zero on $B_{R/2}$, we conclude $\rho \in C^{0,\gamma}(B_{R/2})$ for any $\gamma<\alpha+2s$.
 Repeating the previous step now on $B_{R/2}$, we get the improved regularity   $S_k\in C^{0,\gamma}(B_{R/4})$ for any $\gamma<\alpha+4s$   by \eqref{rosrescaled}, which we can again transfer onto $\rho$ using \eqref{ELmin}, obtaining   $\rho\in C^{0,\gamma}(B_{R/4})$ for any $\gamma<\alpha+4s$.
Iterating, any order $\ell$ of differentiability for $S_{k}$ (and then for $\rho$) can be reached  in a neighborhood of the center of $B_R$. We notice that the argument can be applied starting from any point $x_{0}\in B$, and hence $\rho\in C^{\infty}(B)$.\\
  %%%%%%%%%%%%%%%%%%%%%%%%%%%%%%%%%%%%%%%%%%%%%%%%%%%%%%%%%%%%
   When $N\geq 3$ and $s \in [1,N/2)$, we take numbers $s_1,\ldots, s_l$ such that $s_i\in (0,1)$ for any $i=1,\ldots, l$ and such that $\sum_{i=1}^l s_i=s$. We also let
   $$
   S_k^{l+1}:=S_k\, , \quad S_k^j:=\Pi_{i=j}^l (-\Delta)^{s_j} S_k\, , \qquad \forall \, j \in \{1, \dots, l\}\, .
   $$
   Then $S_k^1=\rho$.
Note that Lemma \ref{lem:regS}(i) can be restated as saying that $\rho \in \mY \cap L^\infty(\RR^N)$ implies $(-\Delta)^{-\delta} \rho \in L^\infty(\RR^N)$ for all $\delta \in (0,N/2)$. Taking $\delta=s-r$ for any $r \in (0,s)$, we have $(-\Delta)^{r}S_k =(-\Delta)^{r-s}\rho \in L^\infty$.
In particular, this means $S_k^j\in L^\infty(\mathbb{R}^N)$ for any $j=1,\ldots, l+1$.
   Moreover,
   there holds
   $$
   (-\Delta)^{s_j} S_k^{j+1}=S_k^j\, , \qquad \forall \, j \in \{1, \dots, l\}\, .
   $$
    Therefore we may recursively apply \eqref{rosrescaled}, starting from $S_k^1=\rho\in C^{0,\alpha}(B_R)$, where the ball $B_R$ is centered at a point within $B$ such that $B_R\subset\subset B$, and   using the iteration rule
    \begin{align*}
     &S_k^j \in C^{0,\gamma}(B_\sigma) \; \Rightarrow \:
    S_k^{j+1} \in C^{0,\gamma+2s_j}\left(B_{\sigma/2}\right) \\
    &\forall \, j \in \{1, \dots, l\}\, , \quad \forall \,\gamma>0 \mbox{ s.t. $\gamma+2s_j$ is not an integer,}\,  \quad \forall\, B_\sigma \subset \subset B.
    \end{align*}
    We obtain   $S_{k}^{l+1}=S_k\in C^{0,\gamma}(B_{R/(2^l)})$ for any $\gamma<\alpha+2s$,   and as before, the Euler--Lagrange equation \eqref{ELmin} implies that   $\rho\in C^{0,\gamma}(B_{R/(2^l)})$ for any $\gamma<\alpha+2s$.   If we repeat the argument, we gain $2s$ in H\"{o}lder regularity for $\rho$ each time we divide the radius $R$ by $2^l$. In this way, we can reach any differentiability exponent for $\rho$ around any point of $B$, and thus $\rho\in C^{\infty}(B)$.
\end{proof}

\begin{remark}\rm We observe that the smoothness of minimisers in the interior of their support also holds in the fair competition regime $m=m_c$.
In such case global H\"{o}lder regularity was obtained in \cite{CCH1}. 
\end{remark}

The main result Theorem \ref{thm:main1} follows from Theorem \ref{thm:radiality}, Corollary \ref{cor:compactsupp}, Theorem \ref{thm:Emins}, Proposition \ref{prop:characmin}, Theorem \ref{prop:Linfty}, Theorem \ref{thm:minsstates} and Theorem \ref{cor:Cinfty}.

%%%%%%%%%%%%%%%%%%%%%%%%%%%%%%%%%%%%%%%%%%%%%%%%%%%%%%
%%%%%%%%%%%%%%%%%%%%%%%%%%%%%%%%%%%%%%%%%%%%%%%%%%%%%%
\section{Uniqueness}\label{sec:!diffdom}
%%%%%%%%%%%%%%%%%%%%%%%%%%%%
%%%%%%%%%%%%%%%%%%%%%%%%%%%%
%%%%%%%%%%%%%%%%%%%%%%%%%%%%
%%%%%%%%%%%%%%%%%%%%%%%%%%%%
\subsection{Optimal Transport Tools}
\label{sec:Optimal Transport Tools-diffdom}
%%%%%%%%%%%%%%%%%%%%%%%%%%%%
%%%%%%%%%%%%%%%%%%%%%%%%%%%%
%%%%%%%%%%%%%%%%%%%%%%%%%%%%
%%%%%%%%%%%%%%%%%%%%%%%%%%%%
Optimal transport is a powerful tool for reducing functional
inequalities onto pointwise inequalities. In other words, to pass from microscopic
inequalities between particle locations to macroscopic
inequalities involving densities. This sub-section summarises the main results of optimal
transportation we will need in the one-dimensional setting. They were already used in \cite{CaCa11, CCHCetraro}, where we refer for detailed proofs.

Let $\tilde \rho$ and $\rho$ be two probability densities.
According to \cite{Brenier91,McCann95}, there exists a convex
function $\psi$ whose gradient pushes forward the measure
$\tilde\rho(a) da$ onto $\rho(x) dx$: $\psi'\#
\left(\tilde\rho(a)  da\right) = \rho(x)  dx$. This convex
function satisfies the Monge-Amp\`ere equation in the weak sense:
for any test function $\varphi\in C_b(\RR)$, the following
identity holds true
\begin{equation*}
\int_{\RR} \varphi(\psi'(a)) \tilde\rho(a)\, da =
\int_{\RR} \varphi(x) \rho(x)\, dx\, . %\label{eq:MongeAmpere2}
\end{equation*}
The convex map is unique a.e. with respect to $\rho$ and it gives
a way of interpolating measures using displacement convexity \cite{McCann97}.
%The convexity of the functionals involved can be summarised as follows \cite{McCann97,CarrilloFerreiraPrecioso12}:
%%%
%\begin{theorem}\label{convex-concave-diffdom} Let $N=1$.
%The functional $\mH_m[\rho]$ is displacement-convex
%provided that $m\geq 0$. The functional $\mW_k[\rho]$ is displacement-concave if $k \in (-1,1)$.
%\end{theorem}
%%%
%This means we have to deal with convex-concave compensations.
On the other hand, regularity of the transport map is a
complicated matter. Here, as it was already done in \cite{CaCa11,CCHCetraro}, 
we will only use the fact that $\psi''(a) da$ can be decomposed in an absolute continuous part
$\psi_{ac}''(a)da$ and a positive singular measure \cite[Chapter 4]{Villani03}. In one dimension, the
transport map $\psi'$ is a non-decreasing function, therefore it is
differentiable a.e. and it has a countable number of jump
singularities. 
For any measurable function $U$, bounded below such
that $U(0) = 0$ we have \cite{McCann97}
\begin{equation}\label{eq:change variables-diffdom}
\int_{\RR} U(\tilde\rho(x)) \, dx = \int_{\RR}
U\left(\dfrac{\rho(a)}{\psi_{ac}''(a)}\right)\psi_{ac}''(a)\, da\, .
\end{equation}
% Also, the Aleksandrov second derivative of $\psi$ coincides with
% the absolutely continuous part of the positive measure $\psi''(x)\, dx$
% that will be denoted by $\psi_{\AC}''(x)\, dx$. Moreover, the a.e.
% representative $\psi'$ can be chosen to be the distribution
% function of the measure $\psi''(x) \, dx$ and it is of bounded variation
% locally, with lateral derivatives existing at all points and
% therefore, we can always write for all $a<b$
% \begin{equation*}\label{tech1}
% \psi'(b)-\psi'(a)= \int_{(a,b]}  \psi''(x)\, dx \geq \int_a^b
% \psi_{\AC}''(x)\,dx
% \end{equation*}
% for a well chosen representative of $\psi'$.\\
The following Lemma proved in \cite{CaCa11} will be used to
estimate the interaction contribution in the free energy.
\begin{lemma}\label{lem:interaction-diffdom}
Let $\mathcal K:(0,\infty)\to \RR$ be an increasing and strictly
concave function. Then, for any $a, b \in \RR$
\begin{equation} \label{eq:jensen 1D}
\mathcal K\left(  \frac{\psi'(b)-\psi'(a)}{b-a} \right) \geq
\int_{0}^1 \mathcal K \left( \psi_{\AC}''([a,b]_s) \right)\, ds\,
,
\end{equation}
where the convex combination of $a$ and $b$ is given by
$[a,b]_s=(1-s)a+sb$.
Equality is achieved in \eqref{eq:jensen 1D} if and only if the
distributional derivative of the transport map $\psi''$ is a
constant function.
\end{lemma}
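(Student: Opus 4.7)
The plan is to combine the structure of the distributional derivative of a convex function with Jensen's inequality for the concave map $\mathcal{K}$.

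First, I would exploit convexity of $\psi$: the second distributional derivative $\psi''$ is a non-negative Radon measure, which decomposes (as the paper already recalls) into its absolutely continuous part $\psi_{\AC}''(t)\,dt$ and a non-negative singular measure $\mu_s$. Assuming without loss of generality that $a<b$ (the case $a=b$ is trivial, and $a>b$ follows by swapping), the fundamental theorem for monotone functions gives
\begin{equation*}
\psi'(b)-\psi'(a) \;=\; \int_{[a,b]}\psi''(dt) \;\geq\; \int_a^b \psi_{\AC}''(t)\,dt.
\end{equation*}
Dividing by $b-a>0$ and performing the affine change of variables $t=[a,b]_s=(1-s)a+sb$, $dt=(b-a)\,ds$, this rewrites as
\begin{equation*}
\frac{\psi'(b)-\psi'(a)}{b-a} \;\geq\; \int_0^1 \psi_{\AC}''([a,b]_s)\,ds.
\end{equation*}

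Next, I would apply $\mathcal{K}$ to both sides. Since $\mathcal{K}$ is increasing, the displayed inequality is preserved; and since $\mathcal{K}$ is concave, Jensen's inequality applied to the probability measure $ds$ on $(0,1)$ gives
\begin{equation*}
\mathcal{K}\!\left(\frac{\psi'(b)-\psi'(a)}{b-a}\right) \;\geq\; \mathcal{K}\!\left(\int_0^1 \psi_{\AC}''([a,b]_s)\,ds\right) \;\geq\; \int_0^1 \mathcal{K}\bigl(\psi_{\AC}''([a,b]_s)\bigr)\,ds,
\end{equation*}
which is exactly \eqref{eq:jensen 1D}.

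For the equality case, both inequalities must be equalities. The first becomes an equality iff the singular part of $\psi''$ vanishes on $[a,b]$. The second, by strict concavity of $\mathcal{K}$, becomes an equality iff $s\mapsto \psi_{\AC}''([a,b]_s)$ is constant, i.e. $\psi_{\AC}''$ is constant on $[a,b]$. Combining both conditions yields that $\psi''$, in the distributional sense, is a constant function, as claimed.

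Honestly there is no serious obstacle here; the only delicate point is the bookkeeping of the Lebesgue decomposition of the monotone function $\psi'$, which is a standard fact from convex analysis (and cited in the paper via \cite[Chapter 4]{Villani03}). Care must be taken when $\psi_{\AC}''$ is only defined almost everywhere in $t$, but since the map $s\mapsto [a,b]_s$ is affine and $b-a>0$, the a.e.\ defined quantity transports to an a.e.\ defined integrand on $(0,1)$, so the statements above make sense without further modification.
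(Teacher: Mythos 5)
The paper does not prove this lemma itself but cites \cite{CaCa11}; your argument is correct and is the standard one: use that $\psi''$ is a non-negative measure to drop the singular part and get $\psi'(b)-\psi'(a)\geq\int_a^b\psi_{\AC}''(t)\,dt$, rescale to $[0,1]$, then apply monotonicity of $\mathcal K$ followed by Jensen for its concavity, with strict concavity giving the equality characterisation. One small point worth tightening: the identity $\psi'(b)-\psi'(a)=\int_{[a,b]}\psi''(dt)$ holds only for the appropriate one-sided determination of $\psi'$ at the endpoints (e.g.\ $\psi'_+(b)-\psi'_-(a)$), so the clean statement is simply the inequality $\psi'(b)-\psi'(a)\geq\int_a^b\psi_{\AC}''(t)\,dt$, which is all you use.
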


%%%%%%%%%%%%%%%%%%%%%%%%%%%%%%%%
%%%%%%%%%%%%%%%%%%%%%%%%%%%%%%%%
%%%%%%%%%%%%%%%%%%%%%%%%%%%%%%%%
\subsection{Functional Inequality in One Dimension}
%%%%%%%%%%%%%%%%%%%%%%%%%%%%%%%
%%%%%%%%%%%%%%%%%%%%%%%%%%%%%%%
%%%%%%%%%%%%%%%%%%%%%%%%%%%%%%%

In what follows, we will make use of a characterisation of stationary states based on some integral reformulation of the necessary condition stated in
Proposition \ref{prop:characmin}. This characterisation was also the key idea in \cite{CaCa11,CCHCetraro} to analyse the asymptotic stability of steady
states and the functional inequalities behind.

\begin{lemma}[Characterisation of stationary states] \ \label{lem:char crit}
Let $N=1$, $\chi>0$ and $k\in (-1,0)$. If $m > m_c$ with $m_c=1-k$, then any stationary state $\bar \rho \in \mY$ of system \eqref{eq:KS} can be written in the
form
\begin{equation} \label{eq:charac sstates}
\bar \rho(p)^m = \frac{\chi}{2} \int_{\RR}\int_{0}^1 |q|^{k}\bar \rho(p -sq)\bar \rho(p-sq+q)\, dsdq\, .
\end{equation}
\end{lemma}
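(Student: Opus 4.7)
The plan is to integrate the one-dimensional stationary state equation and then recognize the resulting iterated integral as the desired $(s,q)$-parameterized expression after an affine change of variables. From Proposition~\ref{prop:sstatesreg} and Definition~\ref{def:sstates}, combined with $\bar\rho^{m-1}\in\mathcal{W}^{1,\infty}$, one has at almost every $p\in\RR$
\begin{equation*}
(\bar\rho^m)'(p) \;=\; -\chi\,\bar\rho(p)\,\bar S_k'(p).
\end{equation*}
Since $\bar\rho$ is compactly supported by Corollary~\ref{cor:compactsupp} and continuous, $\bar\rho^m$ vanishes at $-\infty$, and integrating from $-\infty$ to $p$ yields
\begin{equation*}
\bar\rho(p)^m \;=\; -\chi\int_{-\infty}^p \bar\rho(r)\,\bar S_k'(r)\,dr.
\end{equation*}

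The next step is to substitute the principal-value representation $\bar S_k'(r)=\int_\RR |r-y|^{k-1}\sign(r-y)\bar\rho(y)\,dy$, whose existence and $L^\infty$ bound are provided by Lemma~\ref{lem:regS}. After exchanging the order of integration, the piece over $\{y\le r\le p\}$ (where $\sign(r-y)=+1$) is symmetric under $r\leftrightarrow y$ and thus cancels exactly the $\{r\le y\le p\}$ portion of the complementary piece (where $\sign(r-y)=-1$). Only the ``outside'' contribution survives, giving
\begin{equation*}
\bar\rho(p)^m \;=\; \chi\int_{-\infty}^p\!\!\int_p^{\infty}(y-r)^{k-1}\bar\rho(r)\bar\rho(y)\,dy\,dr.
\end{equation*}
Finally, the affine change of variables $r=p-sq$, $y=p-sq+q$ (so that $q=y-r$ and $s=(p-r)/(y-r)$, with $|dr\,dy|=|q|\,ds\,dq$) sends $\{q>0,\;s\in(0,1)\}$ bijectively onto $\{r<p<y\}$; the inversion $q\mapsto -q$ swaps the roles of $r$ and $y$ and contributes the symmetric region $\{y<p<r\}$, exactly equal to the first half. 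Summing both halves produces the factor $\chi/2$ and yields the claimed identity.

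The main technical obstacle is the interpretation of $\bar S_k'$ and the use of Fubini in the singular range $k\in(-1,0)$, where $|r-y|^{k-1}$ is not locally integrable in $y$. This is handled by truncating on $\{|r-y|<\eps\}$ and using the Hölder bound $|\bar\rho(y)-\bar\rho(r)|\le C|r-y|^\alpha$ with $\alpha>-k=1-k-N$ (the exponent demanded in Definition~\ref{def:sstates}) to control the near-diagonal part uniformly; the truncated integrals are absolutely convergent, so Fubini applies, and one passes to the limit $\eps\to 0$ at the end. With this justification, all of the above manipulations are rigorous.
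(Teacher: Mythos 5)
Your proof is correct, and it follows essentially the same route as the argument the paper defers to (the proof is omitted in the text with a reference to the fair-competition case in \cite[Lemma 2.8]{CCHCetraro}): integrate the one-dimensional stationary equation using the vanishing of $\bar\rho^m$ at infinity, exploit the symmetry of $|r-y|^{k-1}\bar\rho(r)\bar\rho(y)$ to reduce to the region $\{r<p<y\}$, and apply the affine substitution $r=p-sq$, $y=p-sq+q$ with Jacobian $|q|$. Your handling of the singular range — truncating near the diagonal and using the Hölder exponent $\alpha>-k$ required by Definition \ref{def:sstates} to justify the principal-value representation of $\bar S_k'$ and the Fubini exchange — is exactly the extra care this regime demands, and the sign bookkeeping and the origin of the factor $\chi/2$ from the two symmetric half-regions are all correct.
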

%%%%%%%%%%%%%%%%%%%%%%%%%%%%%%
The proof follows the same methodology as for the fair-competition regime  \cite[Lemma 2.8]{CCHCetraro} and we omit it here.
%If $m=m_c$, then it follows from Lemma \ref{lem:char crit} that any stationary state $\bar \rho \in \mY$ satisfies $\mF[\bar \rho] = 0$ by simple
%substitution.
\begin{theorem}\label{thm:HLSmequiv}
 Let $N=1$, $\chi>0$, $k \in (-1,0)$ and $m>m_c$. If \eqref{eq:KS} admits a stationary density $\bar \rho$ in $\mY$, then
 \begin{equation*} \label{fineqcritical-diffdom}
 \mF[\rho]\geq \mF[\bar \rho], \quad \forall \rho \in \mY
 \end{equation*}
 with the equality cases given by dilations of $\bar \rho$.
\end{theorem}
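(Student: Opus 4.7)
The plan is to combine the one-dimensional optimal transport tools of Section~4.1 with the integral reformulation of the Euler--Lagrange equation given in Lemma~4.3, in the spirit of the fair-competition arguments of \cite{CaCa11,CCHCetraro}. Fix $\bar\rho$ as in the statement and any $\rho\in\mY$, and let $\psi'$ be the Brenier map pushing $\bar\rho$ forward to $\rho$. By \eqref{eq:change variables-diffdom} applied to $U(t)=t^m/(m-1)$,
\[
\mH_m[\rho]=\frac{1}{m-1}\int_{\RR}\bar\rho(a)^m\,\psi_{\AC}''(a)^{1-m}\,da,
\]
while writing the interaction in the $a$-variables and using that $\psi'$ is nondecreasing,
\[
\mW_k[\rho]=\frac{1}{2k}\iint_{\RR\times\RR}|b-a|^{k}\left(\frac{\psi'(b)-\psi'(a)}{b-a}\right)^{k}\bar\rho(a)\bar\rho(b)\,da\,db.
\]

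For $k\in(-1,0)$ the function $\mathcal{K}(t):=t^{k}/k$ is increasing and strictly concave on $(0,\infty)$, so Lemma~\ref{lem:interaction-diffdom} applies and, after multiplication by $|b-a|^{k}>0$ and integration against $\bar\rho\otimes\bar\rho$, yields
\[
\mW_k[\rho]\ge \frac{1}{2k}\iint|b-a|^{k}\bar\rho(a)\bar\rho(b)\int_{0}^{1}\psi_{\AC}''([a,b]_s)^{k}\,ds\,da\,db.
\]
The next step is the change of variables $(a,b)\mapsto(p,q)$ with $p=[a,b]_s$, $q=b-a$ (Jacobian $1$ for each $s$), followed by Fubini, which transforms the double integral into
\[
\int_{\RR}\psi_{\AC}''(p)^{k}\left(\int_{\RR}\int_{0}^{1}|q|^{k}\bar\rho(p-sq)\bar\rho(p-sq+q)\,ds\,dq\right)dp.
\]
The inner integral is exactly $2\bar\rho(p)^m/\chi$ by the stationary state identity \eqref{eq:charac sstates}, hence
\[
\chi\,\mW_k[\rho]\ge \frac{1}{k}\int_{\RR}\bar\rho(p)^{m}\psi_{\AC}''(p)^{k}\,dp.
\]

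Summing with the diffusion bound gives
\[
\mF[\rho]\ge \int_{\RR}\bar\rho(p)^{m}\,f\!\left(\psi_{\AC}''(p)\right)dp,\qquad f(t):=\frac{t^{1-m}}{m-1}+\frac{t^{k}}{k}.
\]
A direct computation shows $f'(t)=-t^{-m}+t^{k-1}$, which vanishes only at $t=1$ since the exponent $k+m-1$ is strictly positive in the diffusion-dominated regime $m>m_c=1-k$; the same condition gives $f''(1)=m+k-1>0$, so $t=1$ is the unique global minimiser of $f$ on $(0,\infty)$ with $f(1)=\frac{1}{m-1}+\frac{1}{k}$. On the other hand, integrating \eqref{eq:charac sstates} and performing the same $(a,b)\mapsto(p,q)$ change of variables identifies $\int\bar\rho^m=\chi k\,\mW_k[\bar\rho]$, whence $\mF[\bar\rho]=\bigl(\tfrac{1}{m-1}+\tfrac{1}{k}\bigr)\int\bar\rho^m=\int\bar\rho(p)^m f(1)\,dp$. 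The pointwise inequality $f(\psi_{\AC}'')\ge f(1)$ then yields $\mF[\rho]\ge\mF[\bar\rho]$.

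For the equality analysis, equality in the Jensen step of Lemma~\ref{lem:interaction-diffdom} forces $\psi''$ to be a (distributionally) constant function, i.e.\ $\psi'$ is affine; equality in the pointwise bound $f(\psi_{\AC}'')=f(1)$ on $\{\bar\rho>0\}$ then forces that constant to equal $1$, so $\psi'(a)=a+\mu$. The centre-of-mass constraint in $\mY$ fixes $\mu=0$, so the equality case reduces to $\rho=\bar\rho$ (which can be read as the trivial dilation). The main technical point is to justify the Fubini/change-of-variables manipulations for the singular kernel $|q|^{k}$, but this is handled exactly as in the fair-competition proof using that $\bar\rho\in L^{\infty}$ is compactly supported (Theorem~\ref{prop:Linfty}) and $\rho\in L^{1}\cap L^{m}$, so that all double integrals are finite by HLS (Theorem~\ref{thm:HLSm}).
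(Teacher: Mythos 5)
Your proof is correct and rests on exactly the same ingredients as the paper's: the Brenier map and the displacement change of variables, the Jensen-type inequality of Lemma~\ref{lem:interaction-diffdom}, the integral characterisation of stationary states from Lemma~\ref{lem:char crit}, and the elementary pointwise inequality $\tfrac{z^{1-m}}{m-1}+\tfrac{z^k}{k}\ge \tfrac{1}{m-1}+\tfrac{1}{k}$ with equality iff $z=1$, which is precisely your $f(t)\ge f(1)$.

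The only structural difference is where Jensen and the characterisation get applied. The paper substitutes Lemma~\ref{lem:char crit} into the \emph{diffusion} term so that $\mF[\rho]$ becomes a single double integral over $(a,b)$ with the integrand $\tfrac{1}{m-1}\bla\psi_{\AC}''([a,b])^{1-m}\bra+\tfrac1k\bla\psi''([a,b])\bra^k$; Jensen (with $\mathcal K(t)=-t^{1-m}$ increasing and concave) and then \eqref{rel1} are applied under that double integral. You instead apply Lemma~\ref{lem:interaction-diffdom} (with $\mathcal K(t)=t^k/k$) to the \emph{interaction} term first and then use the $(a,b)\mapsto(p,q)$ change of variables plus Lemma~\ref{lem:char crit} to collapse the interaction into the single integral $\tfrac1k\int\bar\rho^m(\psi_{\AC}'')^k\,dp$, arriving at the cleaner comparison $\mF[\rho]\ge\int\bar\rho^m f(\psi_{\AC}'')\,dp$ versus $\mF[\bar\rho]=f(1)\int\bar\rho^m$. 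Both routes are essentially the same computation read in two symmetric ways, and yours makes the comparison with $\mF[\bar\rho]$ perhaps a bit more transparent. Your equality analysis is also slightly more careful than the paper's: you note explicitly that the pointwise bound forces the constant $\psi''$ to equal $1$, so the equality case in the diffusion-dominated regime is only $\rho=\bar\rho$ (the paper's phrase ``dilations of $\bar\rho$'' is a little over-permissive here, as a genuine dilation changes the energy when $m>m_c$). One small reference nit: in your final sentence the boundedness and compact support of the stationary state $\bar\rho$ should be cited from Definition~\ref{def:sstates} and Corollary~\ref{cor:compactsupp} rather than Theorem~\ref{prop:Linfty}, which concerns global minimisers; the conclusion is unaffected.
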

%%%%%%%%%%%%%%%%%%%%%%%%%%%
\begin{proof}
For a given stationary state $\bar \rho \in \mY$ and a given $\rho \in \mY$, we denote by $\psi$ the convex function whose gradient
pushes forward the measure $\bar \rho(a) da$ onto $\rho(x) dx$: $\psi' \# \left(\bar\rho(a)  da\right) = \rho(x)  dx$.
Using \eqref{eq:change variables-diffdom}, the functional $\mF[\rho]$ rewrites as follows:
\begin{align*}
\mF[\rho]& = \dfrac1{m-1}\int_\RR \left(\dfrac{\bar \rho(a)}{\psi_{ac}''(a)}\right)^{m-1} \bar \rho(a)\, da\\
&\quad+ \dfrac{\chi}{2k} \iint_{\RR\times\RR}\left|\dfrac{\psi'(a)-\psi'(b)}{a-b}\right|^{k} |a-b|^{k} \bar \rho(a) \bar \rho(b) \, da db
\\
&= \dfrac1{m-1} \int_\RR \left(\psi_{ac}''(a)\right)^{1-m} \bar \rho(a)^m\, da\\
&\quad+ \dfrac{\chi}{2k}  \iint_{\RR\times\RR} \bla \psi''([a,b]) \bra^{k} |a-b|^{k} \bar \rho(a) \bar \rho(b) \, da db   \, ,
\end{align*}
where $\bla u([a,b]) \bra = \int_0^1 u([a,b]_s)\, ds$ and $[a,b]_s=(1-s)a+bs$ for any $a,b \in \RR$ and $u:\RR \to \RR_+$. By Lemma \ref{lem:char crit}, we
can write for any $a \in \RR$,
$$
(\psi_{ac}''(a))^{1-m} \bar\rho(a)^m =  \frac{\chi}{2} \int_{\RR} \bla \psi_{ac}''([a,b])^{1-m} \bra |a-b|^{k}\bar \rho(a) \bar \rho(b) \,  db\, ,
$$
and hence
$$
\mF[\rho] =  \frac{\chi}{2} \iint_{\RR\times\RR} \left\{\frac{1}{(m-1)} \bla \psi_{ac}''([a,b])^{1-m} \bra +\frac1k \bla \psi''([a,b]) \bra^{k}\right\}
|a-b|^{k}\bar \rho(a) \bar \rho(b) \, da db\, .
$$
Using the concavity of the power function $(\cdot)^{1-m}$ and and Lemma \ref{lem:interaction-diffdom}, we deduce
$$
\mF[\rho] \geq  \frac{\chi}{2} \iint_{\RR\times\RR} \left\{\frac{1}{(m-1)} \bla \psi''([a,b]) \bra^{1-m} +\frac1k \bla \psi''([a,b]) \bra^{k}\right\}
|a-b|^{k}\bar \rho(a) \bar \rho(b) \, da db\, .
$$
Applying characterisation \eqref{eq:charac sstates} to the energy of the stationary state $\bar \rho$, we obtain
$$
\mF[\bar \rho] =  \frac{\chi}{2} \iint_{\RR\times\RR} \left(\frac{1}{(m-1)} +\frac1k \right)   |a-b|^{k}\bar \rho(a) \bar \rho(b) \, da db\, .
$$
Since
\begin{equation}\label{rel1}
\frac{z^{1-m}}{m-1}+\frac{z^k}{k} \geq \frac{1}{m-1}+\frac{1}{k}
\end{equation}
for any real $z>0$ and for $m>m_c=1-k$, we conclude $\mF[\rho]\geq \mF[\bar \rho]$.
Equality in Jensen's inequality arises if and only if the derivative of the transport map $\psi''$ is a constant function, i.e. when $\rho$ is a dilation
of $\bar \rho$. In agreement with this, equality in \eqref{rel1} is realised if and only if $z=1$.
\end{proof}
%%%%%%%%%
%%%%%%%%%%
In fact, the result in Theorem \ref{thm:HLSmequiv} implies that all critical points of $\mF$ in $\mY$ are global minimisers. Further, we obtain the
following uniqueness result:
\begin{corollary}[Uniqueness]\label{cor:!crit}
 Let $\chi>0$ and $k \in (-1,0)$. If $m_c<m$, then there exists at most one stationary state in $\mY$ to equation \eqref{eq:KS}. If $m_c<m<m^*$, then
 there exists a unique global minimiser for $\mF$ in $\mY$.
\end{corollary}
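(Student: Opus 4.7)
The plan is to funnel everything through Theorem \ref{thm:HLSmequiv}, which already asserts that any stationary state in $\mY$ is a global minimiser of $\mF$ and characterises the equality cases as dilations. So uniqueness of stationary states reduces to showing that the only dilation parameter compatible with two stationary states is the trivial one, and uniqueness of global minimisers will follow by combining this with Theorem \ref{thm:minsstates}.

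For the first claim, I would take two stationary states $\bar\rho_1, \bar\rho_2 \in \mY$ and apply Theorem \ref{thm:HLSmequiv} twice: with $\bar\rho := \bar\rho_1$, $\rho := \bar\rho_2$ to get $\mF[\bar\rho_2] \geq \mF[\bar\rho_1]$, and then with the roles swapped to get $\mF[\bar\rho_1] \geq \mF[\bar\rho_2]$. Equality must therefore hold, and the characterisation of equality in Theorem \ref{thm:HLSmequiv} forces $\bar\rho_2 = \bar\rho_1^\lambda$ for some $\lambda>0$, where $\bar\rho^\lambda(x) = \lambda\bar\rho(\lambda x)$ in the $N=1$ case.

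The next step is to exploit that \emph{both} sides of this dilation satisfy the stationary equation \eqref{eq:sstates}. A direct scaling computation in \eqref{eq:sstates} yields $\partial_x(\bar\rho_1^\lambda)^m(x) = \lambda^{m+1}(\partial_x\bar\rho_1^m)(\lambda x)$ and $\bar\rho_1^\lambda(x)\,\partial_x\bar S_k[\bar\rho_1^\lambda](x) = \lambda^{2-k}(\bar\rho_1\,\partial_x\bar S_k[\bar\rho_1])(\lambda x)$. Substituting both into the stationary equation for $\bar\rho_1^\lambda$ and using the stationary equation for $\bar\rho_1$ evaluated at the point $\lambda x$, all non-trivial factors cancel and the compatibility condition reduces to $\lambda^{(m-1)+k} = 1$. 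Since $(m-1)+k > 0$ in the diffusion-dominated regime $m > m_c = 1-k$, this forces $\lambda = 1$, hence $\bar\rho_1 = \bar\rho_2$. An equivalent route would be via a Pohozaev-type identity $(m-1)\mH_m[\bar\rho]=k\chi\mW_k[\bar\rho]$ obtained by multiplying \eqref{eq:sstates} by $x$ and integrating, applied to both $\bar\rho_1$ and $\bar\rho_1^\lambda$; the same relation $\lambda^{(m-1)+k}=1$ drops out.

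For the second claim (uniqueness of the global minimiser when $m_c<m<m^*$), the argument is essentially free: Theorem \ref{thm:Emins} ensures a global minimiser exists, Theorem \ref{thm:minsstates} ensures every global minimiser of $\mF$ in $\mY$ is in fact a stationary state of \eqref{eq:KS} in this range of $m$, and the just-proved uniqueness of stationary states then yields uniqueness of the minimiser. The main obstacle is not substantive: most of the analytic content has been encapsulated in Theorem \ref{thm:HLSmequiv}, and the only care needed is the bookkeeping of the scaling exponents $m+1$ and $2-k$ under dilations—the crucial observation being that these exponents agree precisely at $m=m_c$, so any $m>m_c$ breaks the dilation invariance of the stationary equation and collapses the one-parameter family of equal-energy profiles to a single one.
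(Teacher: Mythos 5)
Your proof is correct and follows the same overall strategy as the paper: funnel through Theorem~\ref{thm:HLSmequiv}, conclude that two stationary states differ by a dilation, rule out nontrivial dilations, and then obtain uniqueness of the global minimiser from Theorems~\ref{thm:Emins} and~\ref{thm:minsstates}. The one place where you diverge is in how the dilation parameter is pinned down. The paper's proof is terse at this point: it simply asserts ``$\bar\rho_1$ is a dilation of $\bar\rho_2$\dots\ and so uniqueness follows.'' What makes this legitimate is that, if one goes back into the proof of Theorem~\ref{thm:HLSmequiv}, equality requires \emph{both} equality in Jensen's inequality (so $\psi''$ is a constant) \emph{and} equality in the pointwise inequality \eqref{rel1} (so that constant is $z=1$); thus the transport map is an identity-plus-translation, and the centre-of-mass normalisation in $\mY$ kills the translation, giving $\bar\rho_1=\bar\rho_2$ outright. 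The wording ``dilations of $\bar\rho$'' in the \emph{statement} of Theorem~\ref{thm:HLSmequiv} is therefore imprecise, and you (reasonably) took it at face value. Your remedy---a scaling computation in the stationary equation, or equivalently the Pohozaev identity $(m-1)\mH_m[\bar\rho]=k\chi\mW_k[\bar\rho]$ combined with $\mH_m[\bar\rho^\lambda]=\lambda^{m-1}\mH_m[\bar\rho]$ and $\mW_k[\bar\rho^\lambda]=\lambda^{-k}\mW_k[\bar\rho]$, forcing $\lambda^{(m-1)+k}=1$---is entirely correct and in fact cleanly self-contained: it uses only the exponent balance $(m-1)+k>0$ that defines the diffusion-dominated regime and does not require re-opening the proof of Theorem~\ref{thm:HLSmequiv}. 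The second claim is handled identically in both arguments.
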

\begin{proof}
  Assume there are two stationary states to equation \eqref{eq:KS}: $\bar \rho_1, \bar \rho_2 \in \mY$. Then Theorem
  \ref{thm:HLSmequiv} implies that $\mF[\bar\rho_1]=\mF[\bar \rho_2]$, and so $\bar \rho_1$ is a dilation of $\bar \rho_2$.
 By Theorem \ref{thm:Emins}, there exists a minimiser of $\mF$ in $\mY$, which is a stationary state of equation \eqref{eq:KS} if $m_c<m<m^*$ by Theorem
 \ref{thm:minsstates}, and so uniqueness follows.
\end{proof}

Theorem \ref{thm:HLSmequiv} and Corollary \ref{cor:!crit} complete the proof of the main result Theorem \ref{thm:main2}.

\appendix
\section{Properties of the Riesz potential}\label{sec:Rieszestimate}

The estimates in Proposition \ref{prop:rieszestimate} are mainly based on the fact that the Riesz potential of a radial function can be expressed in terms of the  hypergeometric function
\begin{equation*}%\label{eq:hypergeom integral}
F(a,b;c;z):=\frac{\Gamma(c)}{\Gamma(b)\Gamma(c-b)}\int_{0}^1(1-zt)^{-a}(1-t)^{c-b-1}t^{b-1}\,dt,
\end{equation*}
which we define for $z\in(-1,1)$, with the parameters $a,b,c$ being positive. Notice that $F(a,b,c,0)=1$ and $F$ is increasing with respect to $z\in(-1,1)$.
Moreover, if $c>1$, $b>1$ and $c>a+b$, the limit as $z\uparrow 1$ is finite and it takes the value
\begin{equation}\label{hyperlimit}
\frac{\Gamma(c)\Gamma(c-a-b)}{\Gamma(c-a)\Gamma(c-b)},
\end{equation}
see \cite[\SS 9.3]{Leb}.
 We will also make use of some elementary relations. 
%  Let \comment{$c>\max\{a, b\}>0$ [wikipedia and master thesis don't require condition on $a,b,c$ and in fact, it is not satisfied for when we need it]
 %https://people.maths.ox.ac.uk/porterm/research/pearson_final.pdf
%  }, then there holds
First of all, there holds
\begin{equation}\label{hypertransform}
F(a,b;c;z)=(1-z)^{c-a-b} F(c-a,c-b;c;z),
\end{equation}
see  \cite[\SS 9.5]{Leb}, and it is easily seen that
\[
\frac{d}{dz}F(a,b;c;z)=\frac{ab}{c} F(a+1,b+1;c+1;z).
\]
Inserting \eqref{hypertransform} we find
\begin{equation}\label{hyperdiff}
\frac{d}{dz} F(a,b;c;z)=\frac{ab}{c} (1-z)^{c-a-b-1}F(c- a,c-b;c+1;z).
\end{equation}
To simplify notation, let us define
\begin{equation}\label{defH}
 H(a,b;c;z):= \frac{\Gamma(b)\Gamma(c-b)}{\Gamma(c)} F(a,b;c;z)
 = \int_{0}^1(1-zt)^{-a}(1-t)^{c-b-1}t^{b-1}\,dt\, .
\end{equation}
%%%%%%%%%%%%%%%%%%%%%%%%%%%%%%%%%%%%%%%%%%%%%%%%%%%%%%%
%%%%%%%%%%%%%%%%%%%%%%%%%%%%%%%%%%%%%%%%%%%%%%%%%%%%%%%
%%%%%%%%%%%%%%%%%%%%%%%%%%%%%%%%%%%%%%%%%%%%%%%%%%%%%%%

\begin{proof}[Proof of Proposition \ref{prop:rieszestimate}]
For a given radial function $\rho\in \mY$ we use polar coordinates, still denoting by $\rho$ the radial profile of $\rho$, and
compute as in \cite[Theorem 5]{TS}, see also \cite{BCLR}, \cite{Dong}  or \cite[\SS 1.3]{Dre},
\begin{equation}\label{radialrepresentation}\begin{aligned}
|x|^k\ast \rho(x)%&=\int_{\mathbb{R}^N}|x-y|^{k}\rho(y)\,dy\\&
= {\sigma_{N-1}} \int_0^{\infty} \left(\int_0^\pi
\left(|x|^2+\eta^2-2|x|\eta\cos\theta\right)^{k/2}\,\sin^{N-2}\!\theta\,d\theta\right)\,\rho(\eta) \eta^{N-1}\,d\eta\, .
\end{aligned}
\end{equation}
Then we need to estimate the integral
\begin{align}\label{esti1}
\Theta_k(r,\eta)&:= {\sigma_{N-1}}  \int_0^\pi \left(r^2+\eta^2-2r\eta \cos(\theta)\right)^{k/2} \sin^{N-2}(\theta)\, d\theta
=
\begin{cases}
 r^k \vartheta_k\left(\eta/r\right), &\eta<r\, ,\\
 \eta^k \vartheta_k\left(r/\eta\right), &r<\eta\, ,
\end{cases}
\end{align}
  where, for $u\in[0,1)$,
\begin{align*}
\vartheta_k(u)&:= {\sigma_{N-1}}  \int_0^\pi \left(1+u^2-2u \cos(\theta)\right)^{k/2} \sin^{N-2}(\theta)\, d\theta\\
&=  {\sigma_{N-1}}  \left(1+u\right)^k\int_0^\pi \left(1-4\frac{u}{(1+u)^2} \cos^2\left(\frac{\theta}{2}\right)\right)^{k/2}
\sin^{N-2}(\theta)\, d\theta\, .
\end{align*}
Using the change of variables $t=\cos^2\left(\frac{\theta}{2}\right)$, we get from the integral formulation \eqref{defH},
\begin{align}
 \vartheta_k(u)&=2^{N-2} {\sigma_{N-1}}  \left(1+u\right)^k\int_0^1 \left(1-4\frac{u}{(1+u)^2}t\right)^{k/2} t^{\frac{N-3}{2}}
 \left(1-t\right)^{\frac{N-3}{2}}\, dt\notag\\
 &= 2^{N-2} {\sigma_{N-1}}  \left(1+u\right)^k H\left(a,b;c;z\right)\label{esti2}
\end{align}
 with
 $$
 a=-\frac{k}{2}\, , \qquad b=\frac{N-1}{2}\, , \qquad c=N-1\, , \qquad z=\frac{4u}{(1+u)^2}\, .
 $$
The function $F(a,b;c;z)$ is increasing in $z$ and then for any $z\in (0,1)$ there holds
\begin{equation}\label{hypermonotone}
F(a, b;c;z)\le \lim_{z\uparrow 1} F(a,b;c;z).
\end{equation}
Note that $c-a-b=(k+N-1)/2$ changes sign at $k=1-N$, and the estimate of $\Theta_k$ depends on the sign of $c-a-b$:\\

\fbox{Case $k>1-N$}
The limit \eqref{hypermonotone} is finite if $c-a-b>0$ and it is given by the expression
% ${\Gamma(c)\Gamma(c-a-b)}/[{\Gamma(c-a)\Gamma(c-b)}]$, thanks to
\eqref{hyperlimit}.
 Therefore we get from \eqref{esti1}-\eqref{esti2} and \eqref{defH}
 $$\Theta_k(|x|,\eta)\le C_1(|x|+\eta)^{k}\le C_1|x|^{k}\qquad
\text{if} \, \, 1-N<k<0
$$ with
$C_1:=  {2^{N-2}\sigma_{N-1}}  {\Gamma(b)\Gamma(c-a-b)}/{\Gamma(c-a)}$. Inserting this into \eqref{radialrepresentation} concludes the proof of (i).\\

\fbox{Case $k < 1-N$}
If $c-a-b<0$ we  use \eqref{hypertransform}
$$
F(a,b;c;z)=(1-z)^{c-a-b} F(c-a,c-b;c;z),
$$
where now the right hand side, using \eqref{hypermonotone} and \eqref{hyperlimit}, can be bounded  from above by
$(1-z)^{c-a-b}\Gamma(c)\Gamma(a+b-c)/[{\Gamma(a)\Gamma(b)}]$ for $z\in(0,1)$. This yields from \eqref{esti1}-\eqref{esti2} and \eqref{defH} the  estimate
 \begin{equation}\label{IR1}
 \Theta_k(|x|,\eta)\le C_2|x|^{k}\left(\frac{|x|+\eta}{|x|-\eta}\right)^{1-k-N}\qquad \text{if} \, \, k<1-N
 \end{equation}
 with $C_2:=  {2^{N-2}\sigma_{N-1}}  {\Gamma(c-b)\Gamma(a+b-c)}/{\Gamma(a)}$.\\

 \fbox{Case $k=1-N$}
 If on the other hand $c-a-b=0$, we use \eqref{hyperdiff} with $c=2a=2b=N-1$, integrating it and obtaining, since $F=1$ for $z=0$,
 \[
 F(a,b;c;z)=1+\frac{N-1}4\int_{0}^z\frac{F(c-a,c-b;c+1;t)}{1-t}\,dt,
 \]
 and the latter right hand side is bounded above, thanks to \eqref{hypermonotone} and \eqref{hyperlimit}, by
 $$1+\frac{(N-1)\Gamma(N)}{4(\Gamma(N/2+1/2))^2}\,\log{\left(\frac1{1-z}\right)}$$
 for $z\in(0,1)$. This leads from \eqref{esti1}-\eqref{esti2} to
  the new estimate
  \begin{equation}\label{IR2}
  \Theta_k(|x|,\eta)\le C_2|x|^{k}\left(1+\log\left(\frac{|x|+\eta}{|x|-\eta}\right)\right) \qquad \text{if} \, \, k=1-N\, ,
  \end{equation}
with $C_2:= 2^{N-2}\sigma_{N-1}\tfrac{\Gamma\left(N/2-1/2\right)^2}{\Gamma(N-1)}\max\left\{1, \frac{(N-1)\Gamma(N)}{2\Gamma\left((N+1)/2\right)^2}\right\}$.

Now, if $\rho$ is  supported on a ball $B_R$, the radial representation \eqref{radialrepresentation} reduces to
\begin{equation}\label{compactrepresentation}
|x|^k \ast \rho(x) =\int_{0}^{R}\Theta_k(|x|,\eta)\rho(\eta)\eta^{N-1}\,d\eta,\quad x\in\mathbb{R}^N.
\end{equation}
If $|x|>R$,  we have $(|x|+\eta)(|x|-\eta)^{-1}\le (|x|+R)(|x|-R)^{-1}$ for any $\eta\in(0,R)$, therefore we can put $R$ in place of $\eta$ in the right
hand side of \eqref{IR1} and \eqref{IR2}, insert into \eqref{compactrepresentation} and conclude.
\end{proof}

%%%%%%%%%%%%%%%%%%%%%%%%%%%%%%%%%%%%%%%%%%%%%%%%%%%%%%
%%%%%%%%%%%%%%%%%%%%%%%%%%%%%%%%%%%%%%%%%%%%%%%%%%%%%%
\section*{Acknowledgements}
\small{We thank Y. Yao and F. Brock for useful discussion about the continuous Steiner symmetrisation. We thank X. Ros-Ot\'on, P. R. Stinga and P. Mironescu for some fruitful explanations concerning the regularity properties of fractional elliptic equations used in this work.
JAC was partially supported by the Royal Society via a Wolfson Research Merit Award and by the EPSRC grant number EP/P031587/1.
 FH acknowledges support from the EPSRC grant number EP/H023348/1 for the Cambridge Centre for Analysis. EM was partially supported by the FWF project M1733-N20. BV was partially supported by GNAMPA of INdAM, "Programma triennale della Ricerca dell'Universit\`{a} degli Studi di Napoli "Parthenope"- Sostegno alla ricerca individuale 2015-2017". EM and BV are member of the
GNAMPA group of the Istituto Nazionale di Alta Matematica (INdAM). The authors are very grateful to the Mittag-Leffler Institute for providing a fruitful working environment during the special semester \emph{Interactions between Partial Differential Equations \& Functional Inequalities}.}

\bibliographystyle{abbrv}
\bibliography{CHMVbooks}

\end{document}